\definecolor{usablegreen}{rgb}{0,.6,0}
\definecolor{usablecyan}{rgb}{0,.6,.6}
\newtheorem{Lemma}{Lemma}[section]
\newtheorem{lemma}[Lemma]{Lemma}
\newtheorem{prop}[Lemma]{Proposition}
\newtheorem{theorem}[Lemma]{Theorem}
\newtheorem*{LusinsThm}{Lusin's theorem}
\newtheorem{corollary}[Lemma]{Corollary}
\newtheorem{Definition}[Lemma]{Definition}
\newtheorem{definition}[Lemma]{Definition}
\newtheorem{Notation}[Lemma]{Notation}
\theoremstyle{definition}
\newtheorem{Example}[Lemma]{Example}
\newtheorem{Remark}[Lemma]{Remark}
\newtheorem{remark}[Lemma]{Remark}
\newenvironment{enremark}{\begin{Remark}\begin{enumerate}}{\end{enumerate}\end{Remark}}
\newenvironment{enproof}{\begin{proof}\begin{enumerate}}{\qedhere\end{enumerate}\end{proof}}
\newenvironment{acknowledgements}{\subsection*{Acknowledgements}}{}
\newenvironment{example}[1][]{\begin{Example}[#1]}{\end{Example}}
\newcommand{\lemref}[1]{Lemma~\textup{\ref{lem:#1}}}
\newcommand{\propref}[1]{Proposition~\textup{\ref{p:#1}}}
\newcommand{\corref}[1]{Corollary~\textup{\ref{c:#1}}}
\newcommand{\thmref}[1]{Theorem~\textup{\ref{t:#1}}}
\newcommand{\remref}[1]{Remark~\textup{\ref{r:#1}}}
\newcommand{\secref}[1]{Section~\ref{sec:#1}}
\newcommand{\subref}[1]{Subsection~\ref{sub:#1}}
\newcommand{\exref}[1]{Example~\ref{ex:#1}}
\newcommand{\itref}[1]{\ref{it:#1}}
\newcommand{\defref}[1]{Definition~\ref{d:#1}}
\newcommand{\D}{\mathcal{D}}
\newcommand{\DE}[1][J]{\D_E(#1)}
\newcommand{\EE}{\mathbb{E}}
\newcommand{\MM}{\mathbb{M}}
\newcommand{\MMI}{\MM_I}		
\newcommand{\FMI}{\MMI^\mathrm{fct}}	
\newcommand{\Mh}[1][h]{\MMI^{#1}}
\newcommand{\Mheps}{\Mh[h_\eps]}
\newcommand{\Mde}[1][\delta]{\Mh[#1,\eps]}
\newcommand{\Mdmh}[1][h_\eps]{\Mh[\delta_m,#1(\delta_m)]}
\newcommand{\Mdefull}[2][\delta]{\Mh[#1,#2]}
\newcommand{\Mdh}[1][\delta]{\Mh[#1,h(#1)]}
\newcommand{\Mdheps}[1][h_\eps]{\Mh[\delta,#1(\delta)]}
\newcommand{\HMI}[1][h]{\mathfrak{M}_I^{#1}}
\newcommand{\DEMI}[1][\delta]{\HMI[#1,\eps]}
\newcommand{\DEMIfull}[2]{\HMI[#1,#2]}
\newcommand{\Ade}[1][X]{A_{\delta,\eps}^{#1}}
\newcommand{\Adtwoe}[1][X]{A_{2\delta,\eps}^{#1}}
\newcommand{\Adh}[1][X]{A_{\delta,h(\delta)}^{#1}}
\newcommand{\NN}{\mathbb{N}}
\newcommand{\N}{\NN}
\newcommand{\PP}{\mathbb{P}}
\renewcommand{\P}{\PP}
\newcommand{\Ps}[1]{\P(\{#1\})}
\newcommand{\bPs}[1]{\P\(\bigl\{\,#1\,\bigr\}\)}
\newcommand{\RR}{\mathbb{R}}
\newcommand{\R}{\RR}
\newcommand{\B}{\mathfrak{B}}
\newcommand{\BMMI}[1][\delta]{B_{#1}^{\MMI}}	
\newcommand{\CH}{\mathcal{H}}
\newcommand{\CM}{\mathcal{M}}
\newcommand{\1}{\mathds{1}}
\DeclareMathOperator{\supp}{supp}
\DeclareMathOperator{\diam}{diam}
\DeclareMathOperator{\cont}{cont}
\renewcommand{\epsilon}{\varepsilon}\newcommand{\eps}{\varepsilon}
\newcommand{\X}{\mathcal{X}}
\newcommand{\smallcal}[1]{
	{\mathchoice{\scriptstyle}{\scriptstyle}{\scriptscriptstyle}{\scriptscriptstyle}\mathcal{#1}}}
\newcommand{\smallx}{\smallcal{X}}
\newcommand{\smally}{\smallcal{Y}}
\newcommand{\distmatnu}{\boldsymbol{\nu}}
\newcommand{\musq}[1][\mu]{{#1}^{\otimes 2}}
\newcommand{\mupsq}[1][\mu]{(#1')^{\otimes 2}}
\newcommand{\munpsq}[1][n]{\mupsq[\mu_{#1}]}
\newcommand{\muh}{\hat{\mu}}
\newcommand{\hh}{\hat{h}}
\newcommand{\Xh}{\hat{X}}
\newcommand{\rh}{\hat{r}}
\newcommand{\xh}{\hat{x}}
\newcommand{\uh}{\hat{u}}
\newcommand{\smallxh}{\,\hat{\!\smallx}}
\newcommand{\Betrag}[1]{\Bigl| #1 \Bigr|}
\newcommand{\wtspace}{\mathchoice{\,}{\,}{}{}}			
\newcommand{\wmspace}{\mathchoice{\;}{\,}{}{}}			
\newcommand{\bigmid}{\bigm|}
\newcommand{\bsetbar}[2]{\bigl\{\, #1 \bigmid #2 \,\bigr\}}
\newcommand{\set}[2]{\{\wtspace #1 : #2 \wtspace\}}		
\newcommand{\bset}[2]{\bigl\{#1 : #2\bigr\}}
\newcommand{\tomGw}{\xrightarrow{\mathrm{mGw}}}
\newcommand{\tonmGw}{\xrightarrow[\scriptscriptstyle n\to\infty]{\mathrm{mGw}}}
\newcommand{\tow}[1][]{\xrightarrow[\scriptscriptstyle #1]{w}}
\newcommand{\nlim}{\lim_{n\to\infty}}
\newcommand{\nliminf}{\liminf_{n\to\infty}}
\newcommand{\deltalimsup}{\limsup_{\delta\downarrow0}}
\newcommand{\nlimsup}{\limsup_{n\to\infty}}
\newcommand{\restricted}[1]{{\mathclose|}_{#1}}
\newcommand{\bigldelimiter}[1]{\mathchoice{\bigl#1}{\bigl#1}{{\textstyle#1}}{{\scriptstyle#1}}}
\newcommand{\bigrdelimiter}[1]{\mathchoice{\bigr#1}{\bigr#1}{{\textstyle#1}}{{\scriptstyle#1}}}
\renewcommand{\(}{\bigldelimiter(}
\renewcommand{\)}{\bigrdelimiter)}
\newcommand{\folge}[2][n]{(#2_{#1})_{#1\in\N}}
\newcommand{\ton}[1][n]{\wmspace\displaystyle\mathop{\longrightarrow}_{\scriptscriptstyle #1\to\infty}\wmspace}	
\newcommand{\dmGP}{d_\mathrm{mGP}}
\newcommand{\dfmi}{d_\mathrm{fGP}}
\newcommand{\dPr}[1][]{d_\mathrm{Pr}^{#1}}
\newcommand{\Mf}{\CM_\mathrm{f}}	
\newcommand{\floor}[1]{\lfloor#1\rfloor}
\renewcommand{\d}{\mathrm{d}}	
\newcommand{\dx}{\d x}
\newcommand{\dy}{\d y}
\newcommand{\dt}{\d t}
\newcommand{\du}{\d u}
\newcommand{\integralspace}{\/\mathchoice{\;}{\,}{\,}{}}		
\newcommand{\probinta}[4]{\int_{#1} #2 \integralspace #3(\d#4)}
\newcommand{\probintamu}[3]{\int_{#1} #2 \integralspace \mu\(\d#3\)}
\newcommand{\probintanu}[3]{\int_{#1} #2 \integralspace \nu(\d#3)}
\newcommand{\define}[1]{\emph{#1}}
\newcommand{\comment}[1]{}
\newenvironment{proofsteps}{\setcounter{enumi}{0}}{}
\newcommand{\step}{\refstepcounter{enumi}\removelastskip\smallskip\par\noindent\emph{Step \arabic{enumi}.} \hspace{0.5ex}}
\newcommand{\proofcase}[3][\Rightarrow]{\removelastskip\smallskip\par\noindent\emph{``#2$\,#1\,$#3'':}\hspace{0.5ex}}
\newcommand{\eproofcase}[2]{\proofcase[\Leftrightarrow]{#1}{#2}}
\newcommand{\cadlag}{c\`adl\`ag}
\newcommand{\modcadlag}{modulus of \cadlag ness}
\newcommand{\modscadlag}{moduli of \cadlag ness}
\newcommand{\lbeq}[1]{\label{#1}}
\newcommand{\nn}{\nonumber}
\newcommand{\picturefig}[4]{
\begin{figure}[t]
\begin{center}
\includegraphics[scale=#1]{#2}
\end{center}
\caption{#3}
\label{pic:#4}
\end{figure}}
\numberwithin{equation}{section}
\begin{document}

\author{
Sandra Kliem%
\thanks{
Fakult\"at f\"ur Mathematik, Universit\"at Duisburg-Essen, Thea-Leymann-Str.\ 9, D-45127 Essen,
Germany.\newline
\hspace*{1.8em}E-mail: {\tt sandra.kliem@uni-due.de}, {\tt wolfgang.loehr@uni-due.de}}
\and
Wolfgang L\"ohr$^{*}$
}

\title{Existence of mark functions in marked metric measure spaces\footnotetext[0]{Preprint of \emph{Electronic
Journal of Probability}, \textbf{20}, no.\ 73, pp.\ 1--24\\[-2.1ex]}}
\date{\today}

\maketitle 

\begin{abstract}
We give criteria on the existence of a so-called mark function in the context of marked metric measure spaces
(mmm-spaces). If an mmm-space admits a mark function, we call it functionally-marked metric measure space
(fmm-space). This is not a closed property in the usual marked Gromov-weak topology, and thus we put particular
emphasis on the question under which conditions it carries over to a limit.
We obtain criteria for deterministic mmm-spaces as well as random mmm-spaces and mmm-space-valued processes.
As an example, our criteria are applied to prove that the tree-valued Fleming-Viot dynamics with mutation and selection from
\cite{DGP12} admits a mark function at all times, almost surely. Thereby, we fill a gap in a former proof of this
fact, which used a wrong criterion. 

Furthermore, the subspace of fmm-spaces, which is dense and not closed, is investigated in detail. We show that
there exists a metric that induces the marked Gromov-weak topology on this subspace and is complete. Therefore,
the space of fmm-spaces is a Polish space. We also construct a decomposition into closed sets which are related
to the case of uniformly equicontinuous mark functions.  
\smallskip

\noindent
{\bf Key words:} mark function; tree-valued Fleming-Viot process; mutation; marked metric measure space; Gromov-weak topology; Prohorov metric; Lusin's theorem.

\smallskip
\noindent
{\bf MSC2000 subject classification.} {Primary
60K35, 
Secondary
60J25, 
60G17, 
60G57. 
} 

\end{abstract}

\smallskip
\begin{quote}\begin{quote}\begin{quote}
\def\section{\subsection}
\footnotesize
\tableofcontents
\end{quote}\end{quote}\end{quote}
\smallskip

\section{Introduction}

%

A metric (finite) measure spaces (\emph{mm-space}) is a complete, separable metric space $(X,r)$ together with a finite
measure $\nu$ on it. Considering the space of (equivalence classes of) mm-spaces itself as a metric space dates
back to Gromov's invention of the $\underline\Box_\lambda$-metric in \cite[Chapter~3$\frac12$]{Gromov}.
Motivated by Aldous' work on the Brownian continuum random tree (\cite{Aldous:CRT3}), it was realised in
\cite{GPW09} that the space of mm-spaces is a useful state space for tree-valued stochastic processes, and
Polish when equipped with the Gromov-weak topology. That the Gromov-weak topology actually coincides with the
one induced by the $\underline\Box_\lambda$-metric was shown in \cite{Loehr13}.

Important examples for the use of mm-spaces within probability theory are individual-based populations $X$ with
given mutual genealogical distances $r$ between individuals. Here, $r$ can for instance measure the time to the most recent
common ancestor (MRCA) (cf.\ \cite[(2.7), Remark~3.3]{DGP12}), where the resulting metric space is ultrametric.
Another possibility is the number of mutations back to the MRCA (cf.\ \cite{KW2014}), where the resulting space is not
ultrametric. Finally, there is a sampling measure $\nu$ on the space $X$ which models population density.
This means that the state of the process is an mm-space $(X,r,\nu)$.
Such individual-based models are often formulated for infinite population size (with diffuse measures $\nu$) but
obtained as the high-density limit of approximating models with finite populations (where $\nu$ is typically the
uniform distribution on all individuals). 

For encoding more information about the individuals, such as an (allelic) type or location (which may change over time),
marked metric measure spaces (mmm-spaces) and the corresponding marked Gromov-weak topology (mGw-topology) have
been introduced in \cite{DGP11}.
For a fixed complete, separable metric space $(I,d)$ of marks, the sampling measure $\nu$ is replaced by a measure
$\mu$ on $X\times I$, which models population density in combination with mark distribution.

A natural question in this context is whether or not every point of the limiting population $X$ has a single
mark almost surely, that is, does genetic distance zero imply the same type/location?
Put differently, we ask ourselves if $\mu$ factorizes into a ``population density'' measure $\nu$ on $X$
and a mark function $\kappa\colon X\to I$ assigning each individual its mark. If this is the case, we call the
mmm-space functionally-marked (fmm-space). This property is often desirable, and one might want to consider the
space of fmm-spaces, rather than mmm-spaces, as the state space. Unfortunately, the subspace of fmm-spaces is not
closed in the mGw-topology, which means that limits of finite-population models that are constructed as fmm-spaces might
not admit mark functions themselves. It is therefore of interest, if the space of fmm-spaces with marked Gromov-weak topology
is a Polish space (that is a ``good'' state space). Here, we show in \thmref{Polish} that this is indeed the
case. We also produce criteria to enable one to check if an mmm-space admits a mark function.
For limiting populations, they are given in terms of the approximating mmm-spaces. We derive such criteria
for deterministic spaces (\thmref{modulus}), random spaces (\thmref{rnd-modulus}) and mmm-space-valued
processes (\thmref{pr-modulus} and \thmref{modcadlag}). 

An important example of such a high-density limit of approximating models with finite populations is the
tree-valued Fleming-Viot dynamics. In the neutral case, it is constructed in \cite{GPW13} using the formalism of
mm-spaces. In \cite{DGP12}, (allelic) types -- encoded as marks of mmm-spaces -- are included, in order
to model mutation and selection.
For this process, the question of existence of a mark function has already been posed.
In \cite[Remark~3.11]{DGP12} and \cite[Theorem~6]{DGP13} it is stated that the tree-valued Fleming-Viot process
admits a mark function at all times, almost surely.
The given proof, however, contains a gap, because it relies on the criterion claimed in \cite[Lemma~7.1]{DGP13}, which is
wrong in general, as we show in \exref{counter}.
We fill this gap by applying our criteria and showing in \thmref{mark-FV} that the claim is indeed true and the
tree-valued Fleming-Viot process with mutation and selection (TFVMS) admits a mark function at all times, almost
surely. We also show in \thmref{mark-Lambda} that the same arguments apply to the $\Lambda$-version of the TFVMS
in the neutral case, that is where selection is not present.

Intuitively, the existence of a mark function in the case of the TFVMS holds because mutations are large but rare in the approximating sequence of tree-valued Moran models.
Hence, as genealogical distance becomes small, the probability that any mutation happened at all in the
close past becomes small as well (recall that distance equals time to the MRCA). In contrast, in \cite{KW2014}, where evolving phylogenies of trait-dependent branching with mutation and competition are under investigation, mutations happen at a high rate but are
small which justifies the hope for the existence of a mark function also for the limiting model. Our criteria
are also suited for this kind of situation.
%

\medskip\noindent\textbf{Outline. }
The paper is organized as follows. In the subsections of the introduction we first introduce notations and basic
results for the Prohorov metric for finite measures. Then, we give a short introduction to the space $\MMI$ of
marked metric measure spaces (mmm-spaces) with the marked Gromov-weak topology, as well as the marked
Gromov-Prohorov metric $\dmGP$ on it. We continue with defining the so-called functionally-marked metric measure
spaces (fmm-spaces) $\FMI\subseteq \MMI$, and finally investigate the case of equicontinuous mark functions as an
illustrative example.
We emphasize that the restriction of the marked Gromov-Prohorov metric $\dmGP$ to $\FMI$ is not complete. 

In \secref{Polish}, we therefore show that there exists another metric on $\FMI$ that induces the marked
Gromov-weak topology and is complete. As one sees in \subref{equicont}, the situation becomes easy if we
restrict to a subspace of $\MMI$ containing spaces with uniformly equicontinuous mark functions. We introduce in
\subref{betaestim} several related subspaces capturing some aspect of equicontinuity, and obtain a decomposition
of $\FMI$ into closed sets. This decomposition is used to prove Polishness of $\FMI$, and in \secref{criteria} to
formulate criteria for the existence of mark functions.

\secref{criteria} gives criteria for the existence of mark functions. Based on the construction of the
complete metric and the decomposition of $\FMI$, we derive in \subref{criteria-1} criteria to
check if an mmm-space admits a mark function, especially in the case where it is given as a
marked Gromov-weak limit.
We then transfer the results in \subref{criteria-2} to random mmm-spaces and in \subref{criteria-3} to
$\MMI$-valued stochastic processes. 

To conclude, \secref{examples} gives examples. We first show that the criterion in \cite{DGP13} is wrong in
general by means of counterexamples. Our criteria are then applied in \subref{FV} to prove the existence of a
mark function for the tree-valued Fleming-Viot dynamics with mutation and selection. To this goal, we verify the
necessary assumptions for a sequence of approximating tree-valued Moran models. In \subref{TFV} we show that a
similar strategy applies if we replace the tree-valued Moran models by so-called tree-valued $\Lambda$-Cannings
models. Finally, in \subref{FApp}, a future application to evolving phylogenies of trait-dependent branching with
mutation and competition is indicated.

\subsection{Notations and prerequisites} 

In this paper, let all topological spaces be equipped with their Borel $\sigma$-algebras.
We use the following notation throughout the article. 

\begin{Notation}
For a Polish space $E$, let $\mathcal{M}_1(E)$
respectively $\Mf(E)$ denote the space of probability respectively finite measures on the Borel $\sigma$-algebra
$\B(E)$ on $E$. The space $\Mf(E)$ is always equipped with the topology of weak convergence, which is denoted by $\tow$.
We also use the distance in variational norm of\/ $\mu,\nu \in \Mf(E)$, which is
\begin{equation}
	\| \mu-\nu \| := \sup_{B\in\B(E)} \bigl|\mu(B)-\nu(B)\bigr|.
\end{equation}
In particular, $\| \mu \| = \mu(E)$, and\/ $\|\mu-\nu\| = \nu(E)-\mu(E) $ if\/ $\mu \leq \nu$, that is $\mu(A)
\leq \nu(A)$ for all\/ $A \in \B(E)$. 

For\/ $Y \in \B(E)$ and\/ $\mu \in \Mf(E)$, denote by\/ $\mu \restricted Y\in \Mf(E)$ the restriction
of\/ $\mu$ to\/ $Y$, that is\/ $\mu\restricted Y(B):=\mu(B \cap Y)$ for all\/ $B \in \B(E)$. Because\/
$\mu \restricted Y \leq \mu$, we have\/ $\| \mu \restricted Y - \mu \| = \mu(E\setminus Y)$.

For $\varphi\colon E \rightarrow F$ measurable, with $F$ some other Polish space, denote the image measure of $\mu$
under $\varphi$ by $\varphi_* \mu:=\mu\circ \varphi^{-1}$.
Finally, for the product space $X:= E \times F$, the canonical projection operators from $X$ onto $E$ and $F$
are denoted by $\pi_E$ and $\pi_F$, respectively.
\end{Notation}

\begin{definition}[Prohorov metric]
	For finite measures\/ $\mu_0, \mu_1$ on a metric space\/ $(E, r)$, the \define{Prohorov metric} is
	defined as
	\begin{equation}
		\dPr(\mu_0, \mu_1) := \inf\bset{\eps>0}{\mu_i(A) \le \mu_{1-i}(A^\eps) + \eps \;\;\forall A\in
			\B(E),\, i\in\{0,1\}},
	\end{equation}
	where\/ $A^\eps := \set{x\in E}{r(A, x) < \eps}$ is the $\eps$-neighbourhood of\/ $A$.
\end{definition}

It is well-known that the Prohorov metric metrizes the weak convergence of measures if and only if the
underlying metric space is separable. The following equivalent expression for the Prohorov metric turns out to be useful.

\begin{remark}[coupling representation of the Prohorov metric]
	Let $(E,r)$ be a separable metric space and $\mu_1,\mu_2\in \CM_1(E)$.
	For a finite measure $\xi$ on $E^2$, we denote the marginals as $\xi_1 := \xi(\cdot \times E)$ and
	$\xi_2:=\xi(E\times \cdot)$.  It is well-known (see, e.g., \cite[Theorem~III.1.2]{EK}) that
	\begin{equation}\label{eq:Prc}
		\dPr(\mu_1, \mu_2) = \inf\bset{\eps>0}{\exists \xi\in\CM_1(E^2) \text{ with }
			\xi(N_\eps) \le \eps,\;\xi_i=\mu_i,\,i=1,2}, 
	\end{equation}
	where $N_\eps := \set{(x,y)\in E^2}{r(x,y) \ge \eps}$. We obtain from this equation
	\begin{equation}\label{eq:Prcouple}
		\dPr(\mu_1,\mu_2) = \inf\bset{\eps>0}{\exists \xi'\in\Mf(E^2) \text{ with } \xi'(N_\eps)=0,\;
			\xi_i' \le \mu_i,\, \|\mu_i - \xi_i'\| \le \eps, \,i=1,2}.
	\end{equation}
	Indeed, consider $\xi':=\xi\restricted{E^2\setminus
	N_\eps}$ respectively $\xi:=\xi'+(1-\|\xi'\|)^{-1} \big( (\mu_1-\xi'_1) \otimes (\mu_2-\xi'_2) \big)$ to obtain equality in the above.
	Following the ideas of the proof of the representation \eqref{eq:Prc} in \cite{EK}, the representation \eqref{eq:Prcouple} for the Prohorov metric $\dPr(\mu_0, \mu_1)$ is easily seen to hold true for
	measures $\mu_1,\mu_2\in\Mf(E)$ as well, which are not necessarily probability measures.
\end{remark}

From \eqref{eq:Prcouple}, we can easily deduce the following lemma, which we use below.
\begin{lemma}[rectangular lemma]\label{lem:rectangle}
	Let\/ $(E,r)$ be a separable, metric space, $\eps,\delta>0$, and\/ $\mu_1,\mu_2\in\Mf(E)$. Assume that\/
	$\dPr(\mu_1,\mu_2) < \delta$ and there is\/ $\mu_1'\le \mu_1$ with\/ $\|\mu_1-\mu_1'\| \le \eps$. Then
	\begin{equation}\label{eq:rectangle}
		\exists \mu_2' \le \mu_2: \dPr(\mu_1', \mu_2') < \delta,\; \|\mu_2-\mu_2'\| \le \eps. 
	\end{equation}
\end{lemma}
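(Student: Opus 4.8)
The plan is to extract a near-optimal coupling for the pair $(\mu_1,\mu_2)$ from the representation \eqref{eq:Prcouple}, to \emph{trim} it from the first coordinate so that its first marginal is dominated by the prescribed subprobability $\mu_1'$, and finally to read off $\mu_2'$ by enlarging the resulting second marginal back up towards $\mu_2$. Concretely, since $\dPr(\mu_1,\mu_2)<\delta$, the infimum defining $\dPr(\mu_1,\mu_2)$ in \eqref{eq:Prcouple} is $<\delta$, so there exist $\eps'<\delta$ and a witnessing measure $\xi\in\Mf(E^2)$ with $\xi(N_{\eps'})=0$, $\xi_i\le\mu_i$ and $\|\mu_i-\xi_i\|\le\eps'$ for $i=1,2$.

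Next I trim $\xi$. As $\mu_1'\le\mu_1$ and $\xi_1\le\mu_1$, both densities $f:=\frac{\d\mu_1'}{\d\mu_1}$ and $g:=\frac{\d\xi_1}{\d\mu_1}$ exist and take values in $[0,1]$. Setting $\rho:=(f/g\wedge 1)\1_{\{g>0\}}$ and defining the reweighted measure $\eta$ by $\eta(\d x\,\d y):=\rho(x)\,\xi(\d x\,\d y)$, the bound $0\le\rho\le 1$ yields $\eta\le\xi$, whence $\eta(N_{\eps'})\le\xi(N_{\eps'})=0$ and $\eta_2\le\xi_2\le\mu_2$. A direct computation gives $\eta_1=(f\wedge g)\,\mu_1\le f\mu_1=\mu_1'$, and since $f\le 1$ forces $(f-g)^+\le 1-g$, the first mass deficit is controlled:
\[
  \|\mu_1'-\eta_1\|\;=\;\plainint{(f-g)^+}{\mu_1}\;\le\;\plainint{(1-g)}{\mu_1}\;=\;\|\mu_1-\xi_1\|\;\le\;\eps'.
\]

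It remains to build $\mu_2'$ between $\eta_2$ and $\mu_2$. The crucial estimate is the bound on $\|\mu_2-\eta_2\|$: writing $\mu_2(E)-\eta_2(E)=\bigl(\mu_2(E)-\xi_2(E)\bigr)+\bigl(\xi_2(E)-\eta_2(E)\bigr)$ and using $\xi_2(E)-\eta_2(E)=\plainint{(g-f)^+}{\mu_1}\le\plainint{(1-f)}{\mu_1}=\|\mu_1-\mu_1'\|\le\eps$ together with $\|\mu_2-\xi_2\|\le\eps'$, one gets $\|\mu_2-\eta_2\|\le\eps'+\eps$. Now interpolate: with $c:=\max\bigl(\eta_2(E),\,\mu_2(E)-\eps\bigr)\in[\eta_2(E),\mu_2(E)]$, pick $t\in[0,1]$ so that $\mu_2':=\eta_2+t(\mu_2-\eta_2)$ has total mass $c$. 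Then $\eta_2\le\mu_2'\le\mu_2$, while $\|\mu_2-\mu_2'\|=\mu_2(E)-c\le\eps$ and $\|\mu_2'-\eta_2\|=c-\eta_2(E)=\max\bigl(0,\|\mu_2-\eta_2\|-\eps\bigr)\le\eps'$. Feeding $\eta$ into \eqref{eq:Prcouple} with parameter $\eps'$ — the three requirements $\eta(N_{\eps'})=0$, $\eta_i\le\mu_i'$ and $\|\mu_i'-\eta_i\|\le\eps'$ now all hold — yields $\dPr(\mu_1',\mu_2')\le\eps'<\delta$, which is exactly \eqref{eq:rectangle}.

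I expect the only genuine obstacle to be the simultaneous mass bookkeeping in the last step: one must confirm that removing little mass from $\mu_2$ (the requirement $\|\mu_2-\mu_2'\|\le\eps$) is compatible with staying close to the trimmed marginal (the requirement $\|\mu_2'-\eta_2\|<\delta$), and this is precisely what $\|\mu_2-\eta_2\|\le\eps'+\eps<\delta+\eps$ guarantees, making the admissible interval of total masses for $\mu_2'$ non-empty. The degenerate case $\eta_2(E)=\mu_2(E)$, where necessarily $\mu_2'=\mu_2=\eta_2$, should be noted separately but is immediate.
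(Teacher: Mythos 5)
Your proof is correct and follows essentially the same route as the paper: your density reweighting $\eta(\d x\,\d y)=\rho(x)\,\xi(\d x\,\d y)$ with $\eta_1=(f\wedge g)\mu_1$ is exactly the paper's trimmed coupling $\xi'=(\mu_1'\land\xi_1)\otimes L$ built from a disintegration kernel, and the marginal bookkeeping is the same. The only cosmetic difference is the final construction of $\mu_2'$, where the paper simply takes $\mu_2':=\xi_2'+\mu_2-\xi_2$ while you interpolate between $\eta_2$ and $\mu_2$ to tune the total mass; both verify the required bounds.
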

\begin{proof}
	According to \eqref{eq:Prcouple}, we find $\xi\in\Mf(E^2)$ with marginals $\xi_i \le \mu_i$, $i=1,2$,
	$\|\mu_i-\xi_i\| <\delta$, and $\xi(\{r\ge\delta\})=0$.
	Let $L$ be a probability kernel from $E$ to $E$ (for existence see \cite[Theorems~8.36--8.38]{Kle14}) with $\xi=\mu_1\otimes L$ and define
	$\xi':=(\mu_1' \land \xi_1)\otimes L$.
	Obviously, $\xi'_1\le \mu_1'$ and $\|\mu_1'-\xi_1'\| \le \|\mu_1-\xi_1\| < \delta$. Now set
	\begin{equation}
		\mu_2' := \xi'_2 + \mu_2 - \xi_2.
	\end{equation}
	Then $\xi_2'\le\mu_2'$, $\|\mu_2' - \xi'_2\| = \|\mu_2-\xi_2\| < \delta$ and thus $\dPr(\mu_2,\mu_2') <
	\delta$ by \eqref{eq:Prcouple}.
	Furthermore, $\mu_2'\le \mu_2$ and $\|\mu_2-\mu_2'\| = \|\xi_2 - \xi_2'\| \le \|\mu_1 - \mu_1'\| \le
	\eps$.
\end{proof}

\subsection{The space of marked metric measure spaces (mmm-spaces)}

In this subsection, we recall the space $\MMI$ of marked metric measure spaces, and the marked Gromov-Prohorov
metric $\dmGP$, which induces the marked Gromov-weak topology on it. This space, $(\MMI, \dmGP)$, will be the
basic space used in the rest of the paper. These concepts have been introduced in \cite{DGP11}, and are based on
the corresponding non-marked versions introduced in \cite{GPW09}. In contrast to \cite{DGP11}, we allow the
measures of the marked metric measure spaces to be finite, that is do not restrict ourselves to probability measures only. Because a
sequence of finite measures converges weakly if and only if their total masses and the normalized measures converge, or
the masses converge to zero, this straight-forward generalization requires only minor modifications (compare
\cite[Section~2.1]{LoehrVoisinWinter14}, where this generalization is done for metric measure spaces without
marks).

In what follows, fix a complete, separable metric space $(I,d)$, called the \emph{mark space}. It is the same for all marked metric
measure spaces in $\MMI$.
%
\begin{definition}[mmm-spaces, $\MMI$]
\begin{enumerate}
\item 
An \emph{($I$-)marked metric measure space (mmm-space)} is a triple $(X,r,\mu)$ such that\/ $(X,r)$ is a complete,
separable metric space, and\/ $\mu \in \Mf(X \times I)$, where $X \times I$ is equipped with the product topology.
\item
Let\/ $\smallx_i = (X_i,r_i,\mu_i)$, $i=1,2$, be two mmm-spaces, and\/ $\nu_i:=\mu_i(\cdot \times I)$ the
marginal of\/ $\mu_i$ on $X_i$. For a map $\varphi\colon X_1 \to X_2$ we use the notation
\begin{equation}\label{eq:tildephi}
	\tilde\varphi \colon X_1 \times I \to X_2 \times I, \quad (x,u) \mapsto \tilde{\varphi}(x,u) := (\varphi(x),u).
\end{equation}
We call\/ $\smallx_1$ and\/ $\smallx_2$ \emph{equivalent} if they are measure- and mark-preserving
isometric, that is there is an isometry $\varphi\colon \supp(\nu_1) \rightarrow \supp(\nu_2)$, such that
\begin{equation}
	\tilde{\varphi}_* \mu_1 = \mu_2.
\end{equation}
\item Finally, define
\begin{equation}
	\MMI := \bigl\{ \text{equivalence classes of mmm-spaces\/} \bigr\}.
\end{equation}
With a slight abuse of notation, we identify an mmm-space with its equivalence class and write
$\smallx = (X,r,\mu) \in \MMI$ for both mmm-spaces and equivalence classes thereof.
\end{enumerate}
\end{definition}

Next, we recall the marked Gromov-weak topology from \cite[Section~2.2]{DGP11} that turns $\MMI$ into a Polish
space (cf.\ \cite[Theorem~2]{DGP11}). To this goal, we first recall
\begin{definition}[marked distance matrix distribution]
Let\/ $\smallx := (X,r,\mu) \in \MMI$ and
\begin{equation}
  R^{(X,r)} := \left\{\begin{matrix}
                      (X \times I)^{\mathbb{N}} & \rightarrow & \R_+^{\binom\N2} \times I^{\mathbb{N}}, \cr
                      \big( (x_k,u_k)_{k \geq 1} \big) & \mapsto &\big( \big(r(x_k,x_l) \big)_{1 \leq k < l}, (u_k)_{k \geq 1} \big). \cr
                    \end{matrix} \right.
\end{equation}
The \emph{marked distance matrix distribution} of\/ $\smallx$ is defined as
\begin{equation}
  \distmatnu^\smallx := \|\mu\| \cdot \big( R^{(X,r)} \big)_* (\tfrac\mu{\|\mu\|})^{\mathbb{N}}
  	\in \Mf\big(\R_+^{\binom{\N}{2}} \times I^{\mathbb{N}} \big).
\end{equation}
\end{definition}
The marked Gromov-weak topology is the one induced by the map $\smallx\mapsto \distmatnu^\smallx$.
\begin{definition}[marked Gromov-weak topology]\label{def:gw-top}
Let\/ $\smallx, \smallx_1,\smallx_2,\ldots \in \MMI$. We say that\/ $(\smallx_n)_{n\in\N}$ converges to $\smallx$ in
the \emph{marked Gromov-weak topology}, $\smallx_n \tonmGw \smallx$, if and only if
\begin{equation}
  \distmatnu^{\smallx_n} \tow[n\to\infty] \distmatnu^\smallx
\end{equation}
in the weak topology on $\Mf\big( \mathbb{R}_+^{\binom{\N}{2}} \times I^{\mathbb{N}} \big)$.
\end{definition}

Finally, let us recall the Gromov-Prohorov metric from \cite[Section~3.2]{DGP11}. It is complete and metrizes
the marked Gromov-weak topology, as shown in \cite[Proposition~3.7]{DGP11}.
\begin{definition}[marked Gromov-Prohorov metric, $\dmGP$]
For $\smallx_i=(X_i,r_i,\mu_i) \in \MMI, i=1,2$, set
\begin{equation}
  \dmGP(\smallx_1,\smallx_2) :=
  	\inf_{(E,\varphi_1,\varphi_2)} \dPr\big( (\tilde{\varphi}_1)_* \mu_1, (\tilde{\varphi}_2)_* \mu_2 \big),
\end{equation}
where the infimum is taken over all complete, separable metric spaces $(E,r)$ and isometric embeddings
$\varphi_i\colon X_i \rightarrow E$, and\/ $\tilde\varphi_i$ is as in \eqref{eq:tildephi}, $i=1,2$. The 
Prohorov metric $\dPr$ is the one on $\Mf(E \times I)$, based on the metric $\tilde{r} = r + d$ on $E \times I$,
metrizing the product topology. The metric $\dmGP$ is called the \emph{marked Gromov-Prohorov metric}.
\end{definition}

A direct consequence of the fact that $\dmGP$ induces the marked Gromov-weak topology is the following
characterization of marked Gromov-weak convergence obtained in \cite[Lemma~3.4]{DGP11}.

\begin{lemma}[embedding of marked Gromov-weakly converging sequences]\label{lem:seqembed}
	Let\/ $\smallx_n=(X_n,r_n,\mu_n) \in \MMI$ for $n\in\N\cup\{\infty\}$. Then\/ $(\smallx_n)_{n\in\N}$
	converges to $\smallx_\infty$ Gromov-weakly if and only if there is a complete, separable metric space\/
	$(E,r)$, and isometric embeddings\/ $\varphi_n \colon X_n \to E$, such that for $\tilde\varphi_n$ as in
	\eqref{eq:tildephi},
	\begin{equation}
		(\tilde\varphi_n)_*\mu_n \tow \mu.
	\end{equation}
\end{lemma}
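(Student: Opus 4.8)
The plan is to reduce the statement to properties of the metric $\dmGP$ that are already available, namely that $\dmGP$ metrizes the marked Gromov-weak topology and is complete (as recalled just before the definition of $\dmGP$). The ``if'' direction is the easy one: if such an $(E,r)$ and embeddings $\varphi_n$ exist with $(\tilde\varphi_n)_*\mu_n \tow \mu$, then since $\dPr$ metrizes weak convergence on $\Mf(E\times I)$ (the underlying space being separable), we get $\dPr\big((\tilde\varphi_n)_*\mu_n, \mu\big) \to 0$. By the definition of $\dmGP$ as an infimum over embeddings into common spaces, $\dmGP(\smallx_n, \smallx_\infty) \le \dPr\big((\tilde\varphi_n)_*\mu_n, (\tilde\varphi_\infty)_*\mu\big)$ for the particular choice of embeddings, and this bound tends to $0$. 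Hence $\smallx_n \tonmGw \smallx_\infty$.

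The substantive direction is ``only if''. First I would assume $\smallx_n \tonmGw \smallx_\infty$, so that $\dmGP(\smallx_n,\smallx_\infty) \to 0$. The naive attempt --- pick for each $n$ a near-optimal common space $E_n$ and embeddings realising $\dmGP(\smallx_n,\smallx_\infty)$ up to $1/n$ --- fails because these spaces $E_n$ vary with $n$ and need not sit inside one fixed $E$. The standard device is to build a single space step by step and then complete it. Concretely, I would construct $E$ as a countable union: at stage $n$ glue the space $E_n$ that embeds $X_n$ and $X_\infty$ within Prohorov distance $1/n$ onto a running union by identifying the copies of $X_\infty$, using the embedding $\varphi_\infty^{(n)}\colon X_\infty \to E_n$ as the common reference. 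Taking a metric completion of the resulting disjoint-union-modulo-identification yields one complete separable $(E,r)$ together with isometric embeddings $\varphi_n\colon X_n \to E$ and a single $\varphi_\infty\colon X_\infty \to E$ (the latter obtained from the consistent copies of $X_\infty$, or adjusted as below).

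The key point is then that in this fixed $E$ we have, by construction, $\dPr\big((\tilde\varphi_n)_*\mu_n, (\tilde\varphi_\infty^{(n)})_*\mu\big) \le 1/n + o(1)$, where $\varphi_\infty^{(n)}$ is the copy of the $\smallx_\infty$-embedding used at stage $n$. To conclude $(\tilde\varphi_n)_*\mu_n \tow \mu$ with one fixed limit measure $\mu = (\tilde\varphi_\infty)_*\mu$, I would need the various copies $\varphi_\infty^{(n)}$ of the embedding of $X_\infty$ to be reconcilable with a single embedding $\varphi_\infty$; this is where care is required, and it is the main obstacle. The clean way around it is to route the identifications through the fixed reference space $\supp(\mu) \subseteq X_\infty\times I$: always embed the same fixed $X_\infty$ isometrically, and only vary the embedding of $X_n$ relative to it, so that $\varphi_\infty$ is literally constant across stages. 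Then $(\tilde\varphi_\infty)_*\mu$ is a single measure $\mu$ on $E\times I$, and $\dPr\big((\tilde\varphi_n)_*\mu_n,\mu\big)\to 0$ gives the claimed weak convergence. Since this construction is precisely the proof of completeness/metrization of $\dmGP$ in \cite[Proposition~3.7, Lemma~3.4]{DGP11}, I would either cite it directly or reproduce this gluing argument, with the reconciliation of the $X_\infty$-embeddings being the only genuinely delicate step.
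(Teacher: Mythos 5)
Your argument is correct and matches the paper's treatment: the paper gives no proof of this lemma but simply cites \cite[Lemma~3.4]{DGP11}, and your gluing construction (taking near-optimal common spaces $E_n$ for each pair $(\smallx_n,\smallx_\infty)$ and identifying them along a single fixed copy of $X_\infty$, so that the limit measure $\mu=(\tilde\varphi_\infty)_*\mu_\infty$ is unambiguous) is exactly the standard argument behind that reference, with the ``if'' direction following from the infimum in the definition of $\dmGP$ as you say. The only steps left implicit --- that the glued pseudo-metric satisfies the triangle inequality, that the stage-$n$ embeddings remain isometric after gluing, and that gluing does not increase Prohorov distances --- are routine.
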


\subsection{Functionally-marked metric measure spaces (fmm-spaces)} 

Consider an $I$-marked metric measure space $\smallx=(X,r,\mu) \in \MMI$. Since $\mu$ is a finite measure on the
Polish space $X \times I$, regular conditional measures exist (cf.\ \cite[Theorems~8.36--8.38]{Kle14}), and we write 
\begin{equation}\label{nuK}
  \mu(\dx,\du) 
  = \nu(\dx) \cdot K_x(\du), 
\end{equation}
in short $\mu=\nu\otimes K$, for the marginal $\nu := \mu(\cdot \times I) \in \Mf(X)$, and a ($\nu$-a.s.\
unique) probability kernel $K$ from $X$ to $I$.

In the present article we investigate criteria for the \emph{existence of a mark function} for $\smallx$, that
is (cf.\ \cite[Section 3.3]{DGP13}) a measurable function $\kappa\colon X \to I$ such that
\begin{equation}
\lbeq{mark-fcn}
  \mu(\dx,\du) = \nu(\dx) \cdot \delta_{\kappa(x)}(\du),
\end{equation}
or equivalently, $K_x = \delta_{\kappa(x)}$ for $\nu$-almost every $x$.
Obviously, $\smallx$ admits a mark function if and only if $K_x$ is a Dirac measure for $\nu$-almost every $x$.
Recall that the complete, separable mark space $(I,d)$ is fixed once and for all.

\begin{Definition}[fmm-spaces, $\FMI$]
	We call\/ $\smallx=(X,r,\nu,\kappa)$ an \define{($I$-)functionally-marked metric measure space}
	(\define{fmm-space}) if $(X,r)$ is a complete, separable metric space, $\nu \in \Mf(X)$,
	and $\kappa \colon X \to I$ is measurable.
	We identify $\smallx$ with the marked metric measure space $(X, r, \mu)\in \MMI$, where $\mu$ satisfies
	\eqref{mark-fcn}. With a slight abuse of notation, we write $(X,r,\nu,\kappa)=(X,r,\mu)$ if
	\eqref{mark-fcn} is satisfied.
	Denote by $\FMI\subseteq \MMI$ the space of (equivalence classes of) fmm-spaces.
\end{Definition}

A first, simple observation is that $\FMI$ is a dense subspace of $\MMI$.

\begin{lemma}\label{lem:dense}
	The subspace\/ $\FMI$ is dense in\/ $\MMI$ with marked Gromov-weak topology.
\end{lemma}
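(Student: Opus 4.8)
The plan is to approximate an arbitrary mmm-space $\smallx=(X,r,\mu)\in\MMI$ by fmm-spaces obtained by turning the mark \emph{distribution} at each point into a genuine mark \emph{function}. The idea is to enlarge the base space so that the mark becomes an extra coordinate. Concretely, for $\eps>0$ I would take the base space to be $X\times I$ itself, equipped with the perturbed metric
\[
  \hat r_\eps\bigl((x,u),(x',u')\bigr) := r(x,x') + \eps\,\rho(u,u'),
\]
where $\rho := d\wedge 1$ is a bounded metric topologically equivalent to $d$. Since $(X,r)$ and $(I,\rho)$ are complete and separable, so is $(X\times I,\hat r_\eps)$. On this space I put the measure $\mu$ (now regarded as a finite measure on $\hat X_\eps:=X\times I$) together with the mark function $\kappa_\eps(x,u):=u$, which is continuous, hence measurable. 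This defines an fmm-space $\hat{\smallx}_\eps:=(X\times I,\hat r_\eps,\mu,\kappa_\eps)\in\FMI$; its associated mmm-measure is $\hat\mu_\eps(\d(x,u),\d v)=\mu(\d(x,u))\,\delta_u(\d v)$, so that indeed $K_{(x,u)}=\delta_u$ is Dirac.

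Next I would bound $\dmGP(\smallx,\hat{\smallx}_\eps)$ directly. Fixing any $u_0\in I$, I use $E:=(X\times I,\hat r_\eps)$ as the common space together with the isometric embeddings $\varphi_1\colon X\to E$, $x\mapsto(x,u_0)$ (isometric because $\hat r_\eps((x,u_0),(x',u_0))=r(x,x')$) and $\varphi_2:=\mathrm{id}_{X\times I}$. Under these embeddings the two pushforward measures on $E\times I$ are the images of $\mu$ under $(x,u)\mapsto((x,u_0),u)$ and $(x,u)\mapsto((x,u),u)$, respectively, and both have total mass $\|\mu\|$. Coupling them through the identity on $\mu$, the $\tilde r=\hat r_\eps+d$ distance between the coupled points is $\hat r_\eps((x,u_0),(x,u))+d(u,u)=\eps\,\rho(u_0,u)\le\eps$. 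By the coupling representation \eqref{eq:Prcouple} of the Prohorov metric this yields $\dPr\le\eps$, whence $\dmGP(\smallx,\hat{\smallx}_\eps)\le\eps$.

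Letting $\eps\downarrow 0$ then gives $\hat{\smallx}_\eps\to\smallx$ in the marked Gromov-Prohorov metric, and since $\dmGP$ induces the marked Gromov-weak topology, every $\smallx\in\MMI$ is a marked Gromov-weak limit of elements of $\FMI$; thus $\FMI$ is dense.

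I do not expect a serious technical obstacle here; the one genuinely creative step is the construction of $\hat{\smallx}_\eps$ — recognizing that promoting the random mark at $x$ to an extra coordinate, paired with the perturbation $\eps\rho$ of the metric, converts the kernel $K_x$ into a bona fide mark function while displacing the space only by $\eps$. The remaining work is routine bookkeeping: checking completeness and separability of $(X\times I,\hat r_\eps)$, measurability of $\kappa_\eps$, and that using the truncation $\rho=d\wedge 1$ (rather than $d$ itself) is precisely what makes the Prohorov estimate uniform over the sampled marks.
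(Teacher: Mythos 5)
Your construction is essentially the same as the paper's: the paper also takes the base space $X\times I$ with the identity-type mark function $\kappa(x,u)=u$ and the perturbed metric $r(x,y)+e^{-n}\land d(u,v)$ (your $\eps\,(d\wedge1)$ plays the identical role), and then notes the convergence. Your explicit coupling bound $\dmGP(\smallx,\hat{\smallx}_\eps)\le\eps$ just fills in the step the paper dismisses as ``easy to see''; the argument is correct.
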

\begin{proof}
	For $\smallx=(X,r,\mu) \in \MMI$, define $\smallx_n=(X\times I, r_n, \nu_n, \kappa_n)\in\FMI$ with $\nu_n=\mu$,
	$\kappa_n(x,u) = u$, and $r_n\((x,u), (y,v)\) := r(x,y) + e^{-n}\land d(u,v)$, for $x,y\in X,\;u,v\in I$.
	It is easy to see that $\smallx_n \rightarrow \smallx$ in the marked Gromov-weak topology.
\end{proof}

\subsection{The equicontinuous case}\label{sub:equicont} 

It directly follows from \lemref{dense} that the subspace $\FMI$ is not closed in $\MMI$, meaning that if
$\smallx_n \tomGw \smallx$ is a marked Gromov-weakly converging sequence in $\MMI$, and all $\smallx_n$ admit a
mark function, this need not be the case for $\smallx$.
In applications, however, the limit $\smallx$ is often not known explicitly, and it would be important to have
(sufficient) criteria for the existence of a mark function in terms of the $\smallx_n$ alone.
An easy possibility is Lipschitz equicontinuity: if all $\smallx_n$ admit a mark function that is Lipschitz
continuous with a common Lipschitz constant $L>0$, the same is true for $\smallx$ (see \cite{Piotrowiak:phd}).
More generally, this holds for uniformly equicontinuous mark functions as introduced below. We briefly discuss the equicontinuous
case in this subsection, because it is straightforward and illustrates the main ideas. 

Recall that a \emph{modulus of continuity} is a function $h\colon \R_+ \to \R_+ \cup\{\infty\}$ that is
continuous in $0$ and satisfies $h(0)=0$. A function $f\colon X \to I$, where $(X,r)$ is a metric space,
is \emph{$h$-uniformly continuous} if $d\(f(x),f(y)\) \le h\(r(x,y)\)$ for all $x,y \in X$.
Note that for every modulus of continuity $h$, there exists another modulus of continuity $h'\ge h$ which is
increasing and continuous with respect to the topology of the one-point compactification of
$\R_+$. Therefore, we can restrict ourselves without loss of generality to moduli of continuity from
\begin{equation}
\lbeq{eq:def-H}
	\CH:=\bsetbar{h\colon \R_+ \to \R_+\cup\{\infty\}}{h(0)=0,\; h\text{ is continuous and increasing}}.
\end{equation}
For $h\in\CH$ and a metric space $(X,r)$, we define
\begin{equation}\label{eq:goodset}
	A_h^X := A_h^{(X,r)} := \bset{(x_i,u_i)_{i=1,2}\in (X\times I)^2}{d(u_1,u_2) \le h(r(x_1,x_2))}
	\subseteq (X\times I)^2.
\end{equation}
Note that $f\colon X \to I$ is $h$-uniformly continuous if and only if $\((x, f(x)), (y, f(y))\) \in A_h^X$
for all $x, y \in X$, and that $A_h^X$ is a closed set in $(X\times I)^2$ with product topology.

\begin{definition}[$\HMI$]\label{d:HMI}
	For\/ $h\in\CH$, let\/ $\HMI\subseteq \FMI$ be the space of marked metric measure spaces admitting an\/
	$h$-uniformly continuous mark function.
\end{definition}

The next lemma states that a marked metric measure space $(X,r,\mu)$ admits an $h$-uniformly continuous mark
function if and only if a pair of independent samples from $\mu$ is almost surely in $A_h^X$. Furthermore, if a
sequence with $h$-uniformly continuous mark functions converges marked Gromov-weakly, the limit space also
admits an $h$-uniformly continuous mark function.

\begin{lemma}[uniform equicontinuity]\label{lem:equicont}
Fix a modulus of continuity\/ $h\in\CH$.
\begin{enumerate}
	\item\label{it:hmichar} $\displaystyle \HMI = \bset{(X,r,\mu)\in\MMI}{\musq(A_h^X) = \|\musq\|}.$
	\item\label{it:equicontcl} $\HMI$ is closed in the marked Gromov-weak topology. 
\end{enumerate}
\end{lemma}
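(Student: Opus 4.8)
The plan is to establish the measure-theoretic characterization \itref{hmichar} first, and then obtain the closedness \itref{equicontcl} as a fairly direct consequence via \lemref{seqembed} and the portmanteau theorem. For the inclusion ``$\subseteq$'' in \itref{hmichar}, suppose $\smallx=(X,r,\nu,\kappa)$ carries an $h$-uniformly continuous mark function. Then $\mu$ is the image of $\nu$ under $x\mapsto(x,\kappa(x))$, so $\musq$ is the image of $\musq[\nu]$ under $(x_1,x_2)\mapsto\bigl((x_1,\kappa(x_1)),(x_2,\kappa(x_2))\bigr)$, and $h$-uniform continuity says precisely that this map lands in $A_h^X$; hence $\musq(A_h^X)=\|\musq\|$.

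For the reverse inclusion I would disintegrate $\mu=\nu\otimes K$ as in \refeq{nuK} and assume that $\musq$ gives no mass to the open complement of $A_h^X$. The central tool is conditioning on small sets: for Borel $C\subseteq X$ with $\nu(C)>0$ let $\mu_C:=\nu(C)^{-1}\int_C K_x\,\nu(\d x)\in\CM_1(I)$. If $C,C'$ satisfy $r(x,x')\le\rho$ for all $x\in C$, $x'\in C'$, then restricting $\musq$ to $\{x_1\in C,\,x_2\in C'\}$ and normalizing shows that $\mu_C\otimes\mu_{C'}$ is concentrated on the \emph{closed} set $\{d(u,u')\le h(\rho)\}$, whence $d(a,b)\le h(\rho)$ for all $a\in\supp\mu_C$, $b\in\supp\mu_{C'}$. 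Taking $C=C'=B(x_0,\delta)$ gives $\diam\supp\mu_{B(x_0,\delta)}\le h(2\delta)\to0$ as $\delta\downarrow0$; since these supports are nonempty, closed and nested in $\delta$ (as $\mu_{B'}\le\mathrm{const}\cdot\mu_B$ for $B'\subseteq B$), their intersection is a single point $\kappa(x_0)$, defined for $x_0\in\supp\nu$. Applying the cross-bound with $C=B(x_0,\delta)$, $C'=B(x_0',\delta)$ and $\rho=r(x_0,x_0')+2\delta$, then letting $\delta\downarrow0$ and using continuity of $h$, gives $d(\kappa(x_0),\kappa(x_0'))\le h(r(x_0,x_0'))$, so $\kappa$ is $h$-uniformly continuous (hence measurable).

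It remains to verify that $\kappa$ is actually a mark function, i.e.\ $K_x=\delta_{\kappa(x)}$ for $\nu$-almost every $x$; this is the step I expect to be the main obstacle, since the natural ``diagonal'' event $\{x_1=x_2\}$ is $\musq[\nu]$-null and so cannot be accessed directly. I would bridge the pointwise kernel $K_x$ and the averages $\mu_C$ by a martingale argument: fix a refining sequence of countable Borel partitions $(\CP_n)_n$ of the separable space $X$ with mesh tending to $0$ and $\sigma(\bigcup_n\CP_n)=\B(X)$ modulo $\nu$-null sets, and let $P_n(x)$ be the part containing $x$. For every bounded continuous $g\colon I\to\R$, the sequence $x\mapsto\int g\,\d\mu_{P_n(x)}$ is a martingale converging $\nu$-a.e.\ to $\int g\,\d K_x$, so $\mu_{P_n(x)}\tow K_x$ for $\nu$-a.e.\ $x$. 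On the other hand, the cross-bound applied to $B(x,\delta)\downarrow\{x\}$ and $P_n(x)$ yields $\supp\mu_{P_n(x)}\subseteq\overline{B}\bigl(\kappa(x),h(\diam P_n(x))\bigr)$, so that $\mu_{P_n(x)}\tow\delta_{\kappa(x)}$. Comparing the two limits gives $K_x=\delta_{\kappa(x)}$ for $\nu$-a.e.\ $x$, completing \itref{hmichar}.

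For \itref{equicontcl}, let $\smallx_n\tomGw\smallx_\infty$ with all $\smallx_n\in\HMI$. By \lemref{seqembed} I would embed every $X_n$ isometrically into one complete separable $(E,r)$ with $(\tilde\varphi_n)_*\mu_n\tow(\tilde\varphi_\infty)_*\mu_\infty=:\tilde\mu$ in $\Mf(E\times I)$; write $\tilde\mu_n:=(\tilde\varphi_n)_*\mu_n$. Since each $\varphi_n$ is isometric, $(\tilde\varphi_n\times\tilde\varphi_n)^{-1}(A_h^E)=A_h^{X_n}$, where $A_h^E$ is defined as in \refeq{eq:goodset} for $(E,r)$ and is closed; hence \itref{hmichar} gives $\tilde\mu_n^{\otimes2}((A_h^E)^c)=\mu_n^{\otimes2}((A_h^{X_n})^c)=0$. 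As $\tilde\mu_n\tow\tilde\mu$ entails $\tilde\mu_n^{\otimes2}\tow\tilde\mu^{\otimes2}$, the portmanteau theorem applied to the \emph{open} set $(A_h^E)^c$ yields $\tilde\mu^{\otimes2}((A_h^E)^c)\le\liminf_n\tilde\mu_n^{\otimes2}((A_h^E)^c)=0$. Thus $\tilde\mu^{\otimes2}(A_h^E)=\|\tilde\mu^{\otimes2}\|$, which by the isometry $\varphi_\infty$ transports back to $\mu_\infty^{\otimes2}(A_h^{X_\infty})=\|\mu_\infty^{\otimes2}\|$, so $\smallx_\infty\in\HMI$ by \itref{hmichar}.
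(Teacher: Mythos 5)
Your proof is correct, but for the key direction of \itref{hmichar} it takes a substantially longer route than the paper. The paper's entire argument is the observation that, since $A_h^X$ is closed and $\supp(\musq)=\supp(\mu)\times\supp(\mu)$, the condition $\musq(A_h^X)=\|\musq\|$ is equivalent to $\supp(\mu)\times\supp(\mu)\subseteq A_h^X$; taking $x_1=x_2$ there gives $d(u_1,u_2)\le h(0)=0$, so $\supp(\mu)$ is the graph of a function on $\supp(\nu)$, and the cross terms give its $h$-uniform continuity — i.e.\ $\smallx\in\HMI$ iff $\supp(\mu)$ is the graph of an $h$-uniformly continuous function. This completely dissolves the ``diagonal is $\nu^{\otimes2}$-null'' obstacle you flag: working with the \emph{support} of $\musq$ rather than with $\musq$-masses of events lets you access coincident base points directly, so neither the conditional measures $\mu_C$ on shrinking balls nor the martingale/disintegration argument identifying $K_x=\delta_{\kappa(x)}$ is needed. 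Your version is nevertheless sound (the nested-support construction of $\kappa$, the cross-bounds, and the comparison of the two weak limits $\mu_{P_n(x)}\tow K_x$ and $\mu_{P_n(x)}\tow\delta_{\kappa(x)}$ all check out), and it has the merit of being a template that survives when the constraint set is merely measurable rather than closed; but here closedness of $A_h^X$ is exactly what makes the short argument work. Your treatment of \itref{equicontcl} via \lemref{seqembed} and the portmanteau theorem is precisely the fleshed-out version of the paper's one-line remark that closedness is ``obvious from \itref{hmichar} because $A_h^X$ is a closed set.''
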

\begin{proof}
	The mmm-space $\smallx=(X, r, \mu)$ is in $\HMI$ if and only if $\supp(\mu)$ is the graph of an $h$-uniformly
	continuous function. This is clearly equivalent to $\musq\((X\times I)^2 \setminus A_h^X\) = 0$.
	Item \itref{equicontcl} is obvious from \itref{hmichar}, because $A_h^X$ is a closed set.
\end{proof}

This preliminary result is quite restrictive because of the condition to have the same modulus of continuity
for all occurring spaces. In fact, the mark function of the tree-valued Fleming Viot
dynamic considered in \subref{FV} is not even continuous.

At the heart of the following generalisation to measurable mark functions lies the fact that measurable functions
are ``almost continuous'' by Lusin's celebrated theorem (see for instance \cite[Theorem~7.1.13]{BogachevII}).
Here, we give a version tailored to our setup:

\begin{LusinsThm}
	Let\/ $X,Y$ be Polish spaces, $\mu$ a finite measure on\/ $X$, and\/ $f\colon X \rightarrow Y$ a measurable
	function. Then, for every $\epsilon > 0$, there exists a compact set\/ $K_\epsilon \subseteq X$ such that\/
	$\mu(X \setminus K_\epsilon) < \epsilon$ and\/ $f\restricted{K_\epsilon}$ is continuous.
\end{LusinsThm}

\section{The space of fmm-spaces is Polish}\label{sec:Polish}

The subspace $\FMI$ is not closed in $\MMI$ in the marked Gromov-weak topology, and hence the restriction of the
marked Gromov-Prohorov metric $\dmGP$ to $\FMI$ is not complete. In this section, we show that there exists
another metric on $\FMI$ that induces the marked Gromov-weak topology and is complete. This shows that $\FMI$
is a Polish space in its own right.

\subsection{A complete metric on the space of fmm-spaces}

For a measure $\xi$ on $I$, we define 
\begin{equation}
	\beta_\xi := \probinta{I}{\probinta{I}{\left( 1 \land d(u,v) \right)}{\xi}{u}}{\xi}{v}.
\end{equation}
Note that $\beta_\xi=0$ if and only if $\xi$ is a Dirac measure.
For $\smallx=(X,r,\mu) \in \MMI$, with $\mu=\nu\otimes K$ as in \eqref{nuK}, we define
\begin{equation}\label{eq:beta}
	\beta(\smallx) := \probintanu{X}{\beta_{K_x}}{x}
		= \probintamu{X\times I}{\probinta{I}{\left( 1\land d(u,v) \right)}{K_x}{v}}{(x,u)}.
\end{equation}

\begin{prop}[characterization of $\FMI$ as continuity points]\label{p:usc}
	Let\/ $\cont(\beta)\subseteq \MMI$ be the set of continuity points of\/ $\beta\colon \MMI \to \R_+$,
	where\/ $\MMI$ carries the marked Gromov-weak topology. Then
	\begin{equation}\label{eq:cont}
		\cont(\beta) = \beta^{-1}(0) = \FMI.
	\end{equation}
\end{prop}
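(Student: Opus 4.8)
The plan is to prove the two equalities in \refeq{eq:cont} separately, handling $\beta^{-1}(0)=\FMI$ and $\cont(\beta)=\FMI$, and reducing the latter to a single nontrivial assertion, namely continuity of $\beta$ at fmm-spaces. First, $\beta^{-1}(0)=\FMI$ is immediate from the definitions: since $\beta(\smallx)=\intanu{X}{\beta_{K_x}}{x}$ has a non-negative integrand, $\beta(\smallx)=0$ holds if and only if $\beta_{K_x}=0$ for $\nu$-a.e.\ $x$, if and only if $K_x$ is a Dirac measure for $\nu$-a.e.\ $x$ (recall that $\beta_\xi=0$ characterizes Dirac measures $\xi$), which is exactly condition \refeq{mark-fcn} for $\smallx$ to be an fmm-space. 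Granting this, the inclusion $\cont(\beta)\subseteq\FMI$ follows from density: if $\smallx_0\in\cont(\beta)$, then by \lemref{dense} there is a sequence $\smallx_n\in\FMI$ with $\smallx_n\tomGw\smallx_0$, whence $\beta(\smallx_0)=\nlim\beta(\smallx_n)=0$ because $\beta$ vanishes on $\FMI$; thus $\smallx_0\in\beta^{-1}(0)=\FMI$.

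The substantial part is $\FMI\subseteq\cont(\beta)$, i.e.\ continuity of $\beta$ at each fmm-space $\smallx_0=(X_0,r_0,\nu_0,\kappa_0)$. Since $\beta\ge0$ and $\beta(\smallx_0)=0$, it suffices to show $\nlimsup\beta(\smallx_n)=0$ for every sequence $\smallx_n=(X_n,r_n,\mu_n)\tomGw\smallx_0$, where I write the disintegrations $\mu_n=\nu_n\otimes K^n$. The guiding idea is that the limit genuinely carries a mark function, so after embedding everything in a common space, most of the mass of $\mu_n$ must sit close to the graph of $\kappa_0$, and this forces the fibres $K^n_x$ to concentrate, making $\beta_{K^n_x}$ small. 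To set this up, fix $\eta>0$ and apply Lusin's theorem to obtain a compact $K\subseteq X_0$ with $\nu_0(X_0\setminus K)<\eta$ on which $\kappa_0$ is continuous, hence uniformly continuous with some modulus $\omega$. By \lemref{seqembed} (with limit $\smallx_0$) I may regard all $X_n$ as subsets of a common complete separable metric space $E$ with $\mu_n\tow\mu_0$ weakly on $E\times I$. Let $G:=\set{(x,\kappa_0(x))}{x\in K}$, which is compact as the image of $K$ under the continuous map $x\mapsto(x,\kappa_0(x))$, and for $\rho>0$ let $G^\rho$ be its open $\rho$-neighbourhood in the product metric $r+d$.

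Since $G^\rho$ is open and $\mu_0(G^\rho)\ge\mu_0(G)=\nu_0(K)$, the portmanteau theorem together with $\|\mu_n\|\to\|\mu_0\|=\|\nu_0\|$ gives $\nlimsup\mu_n\bigl((G^\rho)^c\bigr)\le\nu_0(X_0\setminus K)<\eta$, so $\mu_n\bigl((G^\rho)^c\bigr)<2\eta$ for all large $n$. The key geometric observation is that if $(x,u)$ and $(x,v)$ lie in $G^\rho$ with the \emph{same} base point $x$, then their witnessing graph points have base points in $K$ at $r$-distance $<2\rho$, so uniform continuity yields $d(u,v)<2\rho+\omega(2\rho)=:\sigma(\rho)$. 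Writing $m_n(x):=K^n_x\bigl(\set{u}{(x,u)\notin G^\rho}\bigr)$, so that $\intanu{X_n}{m_n}{x}=\mu_n\bigl((G^\rho)^c\bigr)$, and splitting each of the two mark integrations in $\beta_{K^n_x}$ into the parts inside and outside this good set, the estimate $1\land d\le\sigma(\rho)$ on the good--good part and $1\land d\le1$ elsewhere gives $\beta_{K^n_x}\le\sigma(\rho)+3\,m_n(x)$. Integrating against $\nu_n$ and using $\|\nu_n\|\to\|\nu_0\|$ leads to $\nlimsup\beta(\smallx_n)\le\sigma(\rho)\,\|\nu_0\|+6\eta$; letting first $\rho\downarrow0$, so that $\sigma(\rho)\to0$, and then $\eta\downarrow0$ yields $\nlimsup\beta(\smallx_n)=0$, as required.

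The hard part is precisely this continuity at fmm-spaces, and the reason is structural: $\beta$ depends on the disintegration $K$, which is not weakly continuous, and one cannot recover $\beta$ from near-diagonal polynomials in $\musq$ because the diagonal of $\nu\otimes\nu$ only sees the atoms of $\nu$. The fibre-concentration argument above circumvents this by using that the limiting marks form a genuine function rather than a kernel. The points requiring care are the order of the three limits in $\rho$, $\eta$ and $n$, and the verification that membership of $(x,u)$ and $(x,v)$ in the tube $G^\rho$ over a common base point indeed confines $u$ and $v$ to a small ball in $I$; this last step is the fmm analogue of the closedness statement for $A_h^X$ in \lemref{equicont}.
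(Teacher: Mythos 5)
Your proof is correct. The easy parts ($\beta^{-1}(0)=\FMI$ and the density argument for $\cont(\beta)\subseteq\beta^{-1}(0)$) coincide with the paper's. For the hard inclusion $\FMI\subseteq\cont(\beta)$ you take a genuinely different technical route. The paper first shows (Lemma~\ref{lem:cupMh}, via Lusin) that every $\smallx\in\FMI$ lies in some $\Mh$, then combines the perturbation stability of the sets $\Mde$ under $\dmGP$ (Lemma~\ref{lem:perturb}, proved with the coupling representation \eqref{eq:Prcouple} and the rectangular Lemma~\ref{lem:rectangle}) with the quantitative bound of Proposition~\ref{p:betaestim}\itref{estimate-beta-3} to get the \emph{uniform} estimate $\sup_{\smallxh\in\BMMI(\smallx)}\beta(\smallxh)\le(h(2\delta)+2\delta)(2+\|\mu\|+\delta)$. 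You instead argue sequentially: embed the converging sequence into a common space via Lemma~\ref{lem:seqembed}, put a tube $G^\rho$ around the graph of the Lusin restriction of $\kappa_0$, and use portmanteau plus convergence of total masses to force all but $O(\eta)$ of the mass of $\mu_n$ into the tube, whence the fibres $K^n_x$ concentrate and $\beta_{K^n_x}\le\sigma(\rho)+3m_n(x)$. Both arguments have Lusin's theorem as the essential input and the same moral content (mass near the graph of a uniformly continuous function forces fibre concentration), but yours bypasses the entire $\DEMI$/$\Mde$/perturbation machinery and is more self-contained and elementary; what the paper's longer route buys is the explicit ball estimate \eqref{eq:ballest} and the closed decomposition $\FMI=\bigcup_h\Mh$, which are reused in Corollary~\ref{c:complete}, Theorem~\ref{t:modulus} and Theorem~\ref{t:modcadlag}, so your argument could not simply replace that section wholesale. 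The only points needing care in your write-up — the order of the limits in $n$, $\rho$, $\eta$, the identity $\int m_n\,\d\nu_n=\mu_n((G^\rho)^c)$, and the triangle-inequality step confining $u,v$ to a $\sigma(\rho)$-ball — are all handled correctly.
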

\begin{proof}[Proof (first part).]
	As seen before, $\smallx=(X,r,\nu\otimes K)\in\MMI$ admits a mark function if and only if $K_x$ is a
	Dirac measure for $\nu$-almost every $x\in X$, which is the case if and only if $\beta(\smallx)=0$.
	Hence $\beta^{-1}(0) = \FMI$.
	Because $\FMI$ is dense in $\MMI$ by \lemref{dense}, no $\smallx\in \MMI\setminus \beta^{-1}(0)$ can be
	a continuity point of $\beta$. Thus $\cont(\beta) \subseteq \beta^{-1}(0)$.
	
	We defer the proof of the inclusion $\beta^{-1}(0) \subseteq \cont(\beta)$ to \subref{betaestim},
	because it requires a technical estimate on $\beta$ derived in \propref{betaestim}.
\end{proof}

In view of \eqref{eq:cont}, we can use standard arguments to construct a complete metric on $\FMI$ that metrizes
marked Gromov-weak topology. Namely consider the sets 
\begin{equation}\label{Bdelta}
	F_m := \overline{\beta^{-1}\([\tfrac1m, \infty)\)} \subseteq \MMI, \quad m\in\N,
\end{equation}
where the closure is in the marked Gromov-weak topology.
Then, due to \propref{usc}, $F_m$ is disjoint from $\FMI$, and $\FMI = \MMI \setminus
\bigcup_{m\in\N} F_m$. Because $F_m$ is also closed by definition, we obtain
\begin{equation}\label{eq:rhogt0}
	\FMI = \bigcap_{m\in\N}\bset{\smallx \in \MMI}{\dmGP(\smallx, F_m) > 0}.
\end{equation}
We consider the metric $\dfmi$ on $\FMI$ defined for $\smallx, \smally \in \FMI$ by
\begin{equation}\label{dfmi}
	\dfmi(\smallx, \smally) := \dmGP(\smallx, \smally) + \sup_{m\in\N} 2^{-m} \land
	\Betrag{\frac1{\dmGP(\smallx, F_m)} - \frac1{\dmGP(\smally, F_m)}}.
\end{equation}

\begin{theorem}[$\FMI$ is Polish]\label{t:Polish}
	The space\/ $\FMI$ of\/ $I$-functionally-marked metric measure spaces with marked Gromov-weak topology is
	a Polish space. Namely, $\dfmi$ is a complete metric on\/ $\FMI$ inducing the marked
	Gromov-weak topology. 
\end{theorem}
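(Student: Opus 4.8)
The plan is to recognize this statement as a concrete instance of the classical fact that a $G_\delta$ subset of a complete metric space is completely metrizable, and to verify directly that the explicit metric $\dfmi$ of \eqref{dfmi} realizes this. The crucial structural input is \propref{usc}: by \eqref{eq:cont} we have $\FMI=\beta^{-1}(0)$, the sets $F_m$ of \eqref{Bdelta} are closed and disjoint from $\FMI$, and $\FMI=\MMI\setminus\bigcup_m F_m$, which yields the $G_\delta$ representation \eqref{eq:rhogt0}, namely $\FMI=\bigcap_m\{\smallx:\dmGP(\smallx,F_m)>0\}$. Granting this, there are exactly three things to check: that $\dfmi$ is a metric, that it induces the same topology on $\FMI$ as $\dmGP$ (hence the marked Gromov-weak topology), and that it is complete. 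Throughout I would abbreviate $g_m(\smallx):=1/\dmGP(\smallx,F_m)$, which by \eqref{eq:rhogt0} is a well-defined finite number for every $\smallx\in\FMI$.

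First, that $\dfmi$ is a metric. Each map $\smallx\mapsto\dmGP(\smallx,F_m)$ is $1$-Lipschitz for $\dmGP$ and strictly positive on $\FMI$, so $g_m$ is continuous there. For the triangle inequality, the point is that $t\mapsto 2^{-m}\land t$ is nondecreasing, concave and vanishes at $0$, hence subadditive; combined with the triangle inequality for $\betrag{g_m(\cdot)-g_m(\cdot)}$ this shows each summand $2^{-m}\land\betrag{g_m(\smallx)-g_m(\smally)}$ is a bounded pseudometric. A supremum of pseudometrics is again a pseudometric, and adding the genuine metric $\dmGP$ restores point separation, so $\dfmi$ is a metric.

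Second, topological equivalence, argued in both directions. Since $\dfmi\ge\dmGP$, convergence in $\dfmi$ implies convergence in $\dmGP$. Conversely, suppose $\smallx_n\to\smallx$ in $\dmGP$ with $\smallx\in\FMI$. Given $\eps>0$, pick $M$ with $2^{-M}<\eps$; the tail $\sup_{m>M}2^{-m}\land\betrag{g_m(\smallx_n)-g_m(\smallx)}$ is at most $2^{-M}$ uniformly in $n$, while for the finitely many $m\le M$ continuity of each $g_m$ at the point $\smallx\in\FMI$ forces $\betrag{g_m(\smallx_n)-g_m(\smallx)}\to0$. Hence $\dfmi(\smallx_n,\smallx)\to0$, so the two metrics induce the same topology, which is the marked Gromov-weak topology since $\dmGP$ metrizes the latter.

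The substantive step, and the one I expect to be the main point, is completeness. Let $(\smallx_n)$ be $\dfmi$-Cauchy. It is a fortiori $\dmGP$-Cauchy, so by completeness of $(\MMI,\dmGP)$ it converges to some $\smallx\in\MMI$; the task is to show $\smallx\in\FMI$ and that the convergence also holds in $\dfmi$. Fix $m$. Once $\dfmi(\smallx_n,\smallx_k)$ drops below $2^{-m}$ the truncation in the $m$-th term becomes inactive, so $\betrag{g_m(\smallx_n)-g_m(\smallx_k)}\to0$ as $n,k\to\infty$; thus $(g_m(\smallx_n))_n$ is a real Cauchy sequence, in particular bounded. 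Consequently $\dmGP(\smallx_n,F_m)=1/g_m(\smallx_n)$ stays bounded away from $0$, and since $\dmGP(\smallx_n,F_m)\to\dmGP(\smallx,F_m)$ by $1$-Lipschitz continuity of the distance-to-$F_m$ function, the limit satisfies $\dmGP(\smallx,F_m)>0$. As this holds for every $m$, the representation \eqref{eq:rhogt0} gives $\smallx\in\FMI$. Finally, since $\smallx_n\to\smallx$ in $\dmGP$ and $\smallx\in\FMI$, the topological equivalence just proved yields $\dfmi(\smallx_n,\smallx)\to0$. The only real obstacle is the bookkeeping around the truncation, which must simultaneously make each term a pseudometric and let the supremum be controlled uniformly; beyond this and the input of \propref{usc}, no new idea past the standard $G_\delta$ construction is required.
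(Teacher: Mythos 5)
Your proposal is correct and follows essentially the same route as the paper: both use the $G_\delta$ representation \eqref{eq:rhogt0} coming from \propref{usc} and the closed sets $F_m$, prove topological equivalence of $\dfmi$ and $\dmGP$ on $\FMI$ via continuity and positivity of $\rho_m(\cdot)=\dmGP(\cdot,F_m)$, and obtain completeness by noting that a $\dfmi$-Cauchy sequence forces $1/\rho_m(\smallx_n)$ to converge, hence keeps the $\dmGP$-limit out of every $F_m$. The only difference is that you spell out the metric axioms and the uniform tail estimate $\sup_{m>M}2^{-m}\land|\cdot|\le 2^{-M}$, which the paper leaves implicit.
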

\begin{proof}
	First, we show that $\dfmi$ induces the marked Gromov-weak topology on $\FMI$. For $m\in\N$, $\smallx\in
	\MMI$, define
	\begin{equation}\label{eq:rhodef}
		\rho_m(\smallx) :=\dmGP(\smallx, F_m),
	\end{equation}
	with $F_m$ defined in \eqref{Bdelta}.
	Note that $\rho_m$ is a continuous function on $\MMI$. 
	Let $\smallx_n, \smallx \in \FMI$. Then $\rho_m(\smallx) >0$ for all $m\in\N$ because of \eqref{eq:rhogt0}.
	Therefore, by definition, $\dfmi(\smallx_n, \smallx) \ton 0$ if and only if the two conditions
	$\dmGP(\smallx_n, \smallx) \ton 0$ and
	\begin{equation}\label{rhoconv}
		 \rho_m(\smallx_n) \ton \rho_m(\smallx) \qquad \forall m\in\N
	\end{equation}
	hold.  We have to show that the marked Gromov-weak convergence already implies \eqref{rhoconv}. This,
	however, follows from the continuity of the $\rho_m$.

	It remains to show that $\dfmi$ is a complete metric on $\FMI$.
	Consider a $\dfmi$-Cauchy sequence $(\smallx_n)_{n\in\N}$ in $\FMI$. By completeness of $\dmGP$ on
	$\MMI$, it converges marked Gromov-weakly to some $\smallx=(X, r, \mu)\in \MMI$.
	Furthermore, for every fixed $m\in\N$, \eqref{dfmi} implies that $1/\rho_m(\smallx_n)$ converges as
	$n\to \infty$, and hence $\dmGP(\smallx_n, F_m)$ is bounded away from zero. Thus $\smallx \not\in F_m$.
	Because $\FMI= \MMI\setminus \bigcup_{m\in\N} F_m$, this means that $\smallx\in\FMI$, and by the first
	part of the proof $\dfmi(\smallx_n, \smallx) \ton 0$.
\end{proof}

With $\BMMI(\smallx) := \bset{\smally\in \MMI}{\dmGP(\smallx, \smally) < \delta}$ we denote the open
$\delta$\nobreakdash-ball in\/
$\MMI$ with respect to $\dmGP$. The following corollary gives formal criteria for a limiting space to admit a
mark function, which are useful only together with estimates on $\beta$.

\begin{corollary}\label{c:complete}
	Let\/ $\folge\smallx$ be a sequence in\/ $\MMI$ which converges marked Gromov-weakly to\/ $\smallx$.
	Then the following four conditions are equivalent:
	\begin{enumerate}
		\item\label{it:fct} $\smallx\in \FMI$.
		\item\label{it:rho} $\nlimsup\rho_m(\smallx_n) > 0$ for all\/ $m\in\N$, with\/ $\rho_m$ defined
			in \eqref{eq:rhodef}.
		\item\label{it:betainv} For every $\delta > 0$, 
			\begin{equation}
				\nlimsup \inf_{\smally \in \beta^{-1}([\delta, \infty[)}\dmGP(\smallx_n, \smally) > 0.
			\end{equation}
		\item\label{it:betareg}
			\begin{equation}\label{betaneighbour}
				\lim_{\delta\downarrow0}\, \nliminf \sup_{\smally \in \BMMI(\smallx_n)} \beta(\smally) = 0.
			\end{equation}
	\end{enumerate}
\end{corollary}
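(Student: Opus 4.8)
The plan is to prove the four equivalences through the chain \itref{fct} $\Leftrightarrow$ \itref{rho} $\Leftrightarrow$ \itref{betainv}, and then to close the loop by \itref{fct} $\Rightarrow$ \itref{betareg} $\Rightarrow$ \itref{betainv}. Two standing facts drive everything. First, $\rho_m = \dmGP(\cdot, F_m)$ is $1$-Lipschitz, hence continuous on $\MMI$ (as already noted in the proof of \thmref{Polish}), so that $\smallx_n \tomGw \smallx$ gives $\nlimsup \rho_m(\smallx_n) = \lim_n \rho_m(\smallx_n) = \rho_m(\smallx)$. Second, since the distance to a set equals the distance to its closure, \eqref{Bdelta} yields the identification $\rho_m(\smallx_n) = \inf_{\smally\in\beta^{-1}([1/m,\infty))}\dmGP(\smallx_n,\smally)$, and the same with $\smallx_n$ replaced by any point of $\MMI$. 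The remaining inputs are \propref{usc} in the form \eqref{eq:cont} ($\cont(\beta)=\beta^{-1}(0)=\FMI$) and the identity $\FMI = \bigcap_{m}\{\rho_m > 0\}$ from \eqref{eq:rhogt0}.

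For \itref{fct} $\Leftrightarrow$ \itref{rho}: by the first fact, condition \itref{rho} says exactly $\rho_m(\smallx) > 0$ for all $m$, which by \eqref{eq:rhogt0} is precisely $\smallx\in\FMI$. For \itref{rho} $\Leftrightarrow$ \itref{betainv}: using the second fact, specialising $\delta = 1/m$ in \itref{betainv} gives $\nlimsup\rho_m(\smallx_n)>0$, so \itref{betainv} $\Rightarrow$ \itref{rho}; conversely, for arbitrary $\delta>0$ choose $m\ge 1/\delta$, so $\beta^{-1}([\delta,\infty)) \subseteq \beta^{-1}([1/m,\infty))$ forces $\inf_{\smally\in\beta^{-1}([\delta,\infty))}\dmGP(\smallx_n,\smally) \ge \rho_m(\smallx_n)$, and taking $\nlimsup$ gives \itref{rho} $\Rightarrow$ \itref{betainv}.

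For the last condition I write $g_\delta(\smallx) := \sup_{\smally\in\BMMI(\smallx)}\beta(\smally)$, which is non-decreasing in $\delta$, so that \itref{betareg} reads $\lim_{\delta\downarrow0}\nliminf g_\delta(\smallx_n)=0$. To prove \itref{fct} $\Rightarrow$ \itref{betareg}: whenever $n$ is large enough that $\dmGP(\smallx_n,\smallx)<\delta$, the triangle inequality gives $\BMMI(\smallx_n)\subseteq \BMMI[2\delta](\smallx)$, hence $g_\delta(\smallx_n)\le g_{2\delta}(\smallx)$ and therefore $\nliminf g_\delta(\smallx_n)\le g_{2\delta}(\smallx)$; letting $\delta\downarrow0$ bounds the quantity in \itref{betareg} by $\lim_{\delta\downarrow0}g_{2\delta}(\smallx)$. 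If $\smallx\in\FMI$ then $\beta$ is continuous at $\smallx$ with $\beta(\smallx)=0$ by \propref{usc}, which forces $\lim_{\delta\downarrow0}\sup_{\smally\in\BMMI(\smallx)}\beta(\smally)=\beta(\smallx)=0$, so \itref{betareg} holds. For \itref{betareg} $\Rightarrow$ \itref{betainv} I argue by contraposition: if \itref{betainv} fails there is $\delta_0>0$ with $\inf_{\smally\in\beta^{-1}([\delta_0,\infty))}\dmGP(\smallx_n,\smally)\to 0$, so I may choose $\smally_n$ with $\beta(\smally_n)\ge\delta_0$ and $\dmGP(\smallx_n,\smally_n)\to 0$; then for each fixed $\delta>0$ eventually $\smally_n\in\BMMI(\smallx_n)$, whence $g_\delta(\smallx_n)\ge\delta_0$ and $\nliminf g_\delta(\smallx_n)\ge\delta_0$ for all $\delta$, so the limit in \itref{betareg} is at least $\delta_0>0$ and \itref{betareg} fails.

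The only genuinely non-formal step is \itref{fct} $\Rightarrow$ \itref{betareg}, where I must invoke the continuity of $\beta$ on $\FMI$ from \propref{usc} to conclude that the upper envelope $\lim_{\delta\downarrow0}g_\delta(\smallx)$ collapses to $\beta(\smallx)=0$; everything else is bookkeeping with the identity $\rho_m(\smallx_n)=\inf_{\smally\in\beta^{-1}([1/m,\infty))}\dmGP(\smallx_n,\smally)$ and with monotonicity in $\delta$. I expect the main care to lie in keeping the order of $\nliminf$ and $\lim_{\delta\downarrow0}$ straight, and in the triangle-inequality inclusion $\BMMI(\smallx_n)\subseteq\BMMI[2\delta](\smallx)$ that transfers control near $\smallx_n$ to control near the limit $\smallx$.
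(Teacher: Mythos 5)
Your proposal is correct, and the legs \itref{fct} $\Leftrightarrow$ \itref{rho} $\Leftrightarrow$ \itref{betainv} coincide with the paper's argument (continuity of $\rho_m$ plus \eqref{eq:rhogt0}, and the fact that the distance to $F_m$ equals the distance to $\beta^{-1}([1/m,\infty))$ together with monotonicity of $\delta\mapsto\beta^{-1}([\delta,\infty))$). Where you diverge is the last equivalence: the paper proves \itref{betainv} $\Leftrightarrow$ \itref{betareg} directly, by a purely formal chain of quantifier manipulations (rewriting the statement in terms of arbitrary sequences $\folge\smally$ and swapping the roles of $\delta$ and $\eps$), using nothing beyond the definitions and monotonicity in $\delta$. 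You instead close the cycle via \itref{fct} $\Rightarrow$ \itref{betareg} $\Rightarrow$ \itref{betainv}, and the implication \itref{fct} $\Rightarrow$ \itref{betareg} invokes the continuity of $\beta$ at points of $\FMI$ from \propref{usc} — precisely the nontrivial half of that proposition, whose proof the paper defers to \subref{betaestim} and which rests on the estimates of \propref{betaestim}. Both routes are valid (the proposition is a stated result one may cite), and yours is arguably more transparent: \itref{betareg} is visibly the sequential form of ``$\beta$ is continuous at the limit with value $0$''. The trade-off is that the paper's version keeps the corollary a purely formal consequence of the definitions of $\rho_m$, $F_m$ and $\BMMI$, independent of the deeper analytic input, whereas yours makes the corollary logically downstream of \propref{betaestim}. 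Your individual steps (the inclusion $\BMMI(\smallx_n)\subseteq \BMMI[2\delta](\smallx)$ for large $n$, the collapse of $\sup_{\smally\in\BMMI(\smallx)}\beta(\smally)$ to $\beta(\smallx)=0$ by continuity, and the contrapositive of \itref{betareg} $\Rightarrow$ \itref{betainv} via a sequence $\smally_n$ with $\beta(\smally_n)\ge\delta_0$ and $\dmGP(\smallx_n,\smally_n)\to0$) are all sound.
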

\begin{proof}
	\eproofcase{\ref{it:fct}}{\ref{it:rho}}
		We have $\rho_m(\smallx) = \nlim \rho_m(\smallx_n)$, and
		$\rho_m(\smallx) > 0$ for all $m\in\N$ if and only if $\smallx \in \FMI$.
	\eproofcase{\ref{it:rho}}{\ref{it:betainv}} follows directly from the definition of $\rho_m$.
	\eproofcase{\ref{it:betainv}}{\ref{it:betareg}} Using monotonicity in $\delta$ we obtain
	\begin{align}
		\text{\ref{it:betainv}} &\iff \forall \delta>0\,\exists \eps>0\,\forall \folge\smally \subseteq \MMI
		    \text{ with\/ } \beta(\smally_n) \ge \delta :  \nlimsup \dmGP(\smallx_n, \smally_n) \ge \eps \\
		& \iff \forall \delta>0\, \exists\eps>0\,\forall \folge\smally\subseteq \MMI:
		    \nliminf \beta(\smally_n) < \delta \text{ or\/ } \nlimsup \dmGP(\smallx_n, \smally_n)\ge\eps
		    \nn\\
		& \iff \forall \eps>0\,\exists\delta>0\, \forall \folge\smally \subseteq \MMI \text{ with\/ }
		\smally_n\in\BMMI(\smallx_n) :
		    \nliminf \beta(\smally_n) < \eps \;\iff\; \text{\ref{it:betareg}}, \nn
	\end{align}
	where, in the third equivalence, we renamed $\delta$ to $\eps$ and $\eps$ to $\delta$.
\end{proof}

\subsection{A decomposition of $\FMI$ into closed sets and estimates on $\beta$}\label{sub:betaestim}

In this subsection, we derive some estimates on $\beta$ and use them to complete the proof of \propref{usc}.
Furthermore, we construct a decomposition of $\FMI$ into closed sets which are related to the sets $\HMI$.

As we have seen in \subref{equicont}, the situation becomes easy if we restrict to the uniformly equicontinuous
case, that is to the subspace $\HMI$ for some $h\in\CH$ as in \defref{HMI}. We introduce in what follows several related
subspaces capturing some aspect of equicontinuity. In analogy to the definition of $A_h^X$ in
\eqref{eq:goodset}, we use for a metric space $(X,r)$, and $\delta,\eps>0$, the notation
\begin{equation}\label{eq:epsgoodset}
	\Ade := \Ade[(X,r)] := \bset{(x_i,u_i)_{i=1,2}\in (X\times I)^2}{r(x_1,x_2)\ge
	\delta \text{ or } d(u_1,u_2) \le \eps} \subseteq (X\times I)^2.
\end{equation}
Note that $\Ade$ is a closed set. For every $h\in\CH$, using monotonicity and continuity of $h$, we observe that
\begin{equation}
	A_h^X = \bigcap_{\delta>0} \Adh.
\end{equation}

\begin{definition}[$\DEMI$, $\Mde$, $\Mh$]\label{d:EpsMI}
Let\/ $\delta, \eps > 0$ and $h\in\CH$. We define
\begin{align}
	\DEMI &:= \bset{(X,r,\mu)\in\MMI}{\musq(\Ade) = \|\mu^{\otimes 2}\|}, \\
	\Mde &:= \bset{(X,r,\mu)\in\MMI}
		{\exists \mu' \in \Mf(X \times I):\mu'\le \mu,\, \|\mu-\mu'\|\le\eps,\, (X,r,\mu') \in \DEMI},
\end{align}
and\/ $\Mh := \bigcap_{\delta>0} \Mdh$.
\end{definition}

The intuition is that for spaces in $\DEMIfull{\delta}{h(\delta)}$, the measure behaves as if it admitted 
an $h$-uniformly continuous mark function when distances of order $\delta$ are observed. The same holds for the
spaces in $\Mdh$ if we are additionally allowed to neglect a portion $h(\delta)$ of mass.

\begin{enremark}
	\item Clearly $\HMI \subseteq \Mh$. We will see in \lemref{cupMh} that $\Mh \subseteq \FMI$.
	\item The space $\Mh$ is \emph{much} larger than $\HMI$: while $\bigcup_{h\in\CH} \HMI$ contains only mmm-spaces
		admitting a uniformly \emph{continuous} mark function, we will see in \lemref{cupMh} that every element
		of $\FMI$ is in some $\Mh$.
	\item The spaces $\DEMI$ and $\Mde$ are not contained in $\FMI$. For instance, consider $I=\R$ and
	$\smallx=\(\{0\}, 0, \delta_{(0,0)} + \delta_{(0,\eps)}) \in \DEMI\subseteq \Mde$.
\end{enremark}

We have the following stability of $\Mde$ with respect to small perturbations in the marked Gromov-Prohorov
metric.
\begin{lemma}[perturbation of $\Mde$]\label{lem:perturb}
	Let\/ $\delta,\eps>0$, $\smallx\in \Mde$ and\/ $\smallxh\in \MMI$. Then
	\begin{equation}\label{eq:perturb}
		\delta' := \dmGP(\smallx, \smallxh) < \tfrac12\delta
			\;\implies\; \smallxh\in \Mdefull[\delta-2\delta'\!]{\,\eps+2\delta'}.
	\end{equation}
\end{lemma}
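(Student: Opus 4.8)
The plan is to transfer the defining property of $\Mde$ from $\smallx$ to $\smallxh$ by pushing both measures into a common metric space where $\dPr$ is small, and then to use the rectangular lemma (\lemref{rectangle}) to simultaneously control mass-deletion and the Prohorov distance of the relevant submeasures. First I would unwind the definitions: since $\smallx=(X,r,\mu)\in\Mde$, there is a submeasure $\mu'\le\mu$ with $\|\mu-\mu'\|\le\eps$ and $\musq[(\mu')](\Ade[X])=\|(\mu')^{\otimes2}\|$, i.e.\ $(\mu')^{\otimes2}$ is supported on the ``good set'' $\Ade[X]$. Because $\dmGP(\smallx,\smallxh)=\delta'$, by \lemref{seqembed}-type embedding (or directly from the definition of $\dmGP$) I can pick a common complete separable metric space $(E,\tilde r)$ with $\tilde r=r+d$ on $E\times I$ and isometric embeddings so that the pushed-forward measures $\mu$ and $\muh$ satisfy $\dPr(\mu,\muh)<\tfrac12\delta$ on $\Mf(E\times I)$; I will identify $X$ and $\Xh$ with their images and suppress the embeddings in notation.

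The key step is to apply the rectangular lemma to the pair $(\mu,\muh)$ with the given submeasure $\mu'\le\mu$: since $\dPr(\mu,\muh)<\delta'<\tfrac12\delta$ and $\|\mu-\mu'\|\le\eps$, \lemref{rectangle} yields a submeasure $\muh'\le\muh$ with $\|\muh-\muh'\|\le\eps$ and $\dPr(\mu',\muh')<\delta'$. The goal is then to show $(\Xh,\rh,\muh')\in\DEMIfull{\delta-2\delta'}{\eps+2\delta'}$, possibly after discarding a further $2\delta'$ of mass; that is, I want a submeasure of $\muh$ whose square is supported on $\Adtwoe[\Xh][\delta-2\delta',\eps+2\delta']$. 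The natural mechanism is that a Prohorov-$\delta'$ coupling between $\mu'$ and $\muh'$ relates pairs of points at $\tilde r$-distance $<\delta'$ on a set of the complementary mass; concretely I take an optimal (or near-optimal) coupling $\xi$ realizing $\dPr(\mu',\muh')<\delta'$, restrict to the diagonal-like region $\{\tilde r<\delta'\}$, and use it to transport the good-set property of $(\mu')^{\otimes2}$ to a large submeasure of $(\muh')^{\otimes2}$.

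The hard part will be the geometry of how the thresholds shift: I must show that if two points of $\Xh$ are $\delta'$-close (in $E\times I$, hence within $\delta'$ in $r$ and in $d$ separately) to points of $X$ lying in $\Ade[X]$, then the $\Xh$-pair lies in $\Adtwoe[\Xh][\delta-2\delta',\eps+2\delta']$. This is a triangle-inequality bookkeeping argument: the condition ``$r$-distance $\ge\delta$ or $d$-distance $\le\eps$'' defining $\Ade[X]$ degrades under a $\delta'$-perturbation in each coordinate of each of the two points to ``$r$-distance $\ge\delta-2\delta'$ or $d$-distance $\le\eps+2\delta'$,'' which is exactly the $\Adtwoe$ condition with the shifted parameters. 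The only subtlety is accounting for the total mass lost: the coupling $\xi$ leaves at most $\delta'$ of mass of $\mu'$ (and of $\muh'$) uncoupled or coupled across large distances, so after removing the exceptional mass from each of the two independent coordinates of $(\muh')^{\otimes2}$ I lose at most $2\delta'$, which is absorbed into the mass budget $\eps+2\delta'$. Assembling these pieces gives a submeasure $\muh''\le\muh$ with $\|\muh-\muh''\|\le\eps+2\delta'$ and $(\muh'')^{\otimes2}$ supported on the good set with the reduced $\delta$-threshold, which is precisely the assertion $\smallxh\in\Mdefull[\delta-2\delta']{\,\eps+2\delta'}$.
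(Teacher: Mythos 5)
Your proposal is correct and follows essentially the same route as the paper's proof: embed both spaces into a common metric space, apply \lemref{rectangle} to transfer the submeasure $\mu'$ to a submeasure $\muh'\le\muh$, take a coupling from the representation \eqref{eq:Prcouple} supported on pairs at distance $<\delta'$, and use the triangle inequality to shift the thresholds of the good set while keeping the mass deficit within $\eps+2\delta'$. The only cosmetic difference is that the paper takes the second marginal $\xi_2$ of the coupling directly as the final submeasure (losing only $\eps+\delta'$ of mass), whereas your accounting of ``$2\delta'$ from the two coordinates'' is slightly coarser but still lands within the required budget.
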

\begin{proof}
	Let $\smallx=(X,r,\mu)$, $\smallxh=(\Xh,\rh,\muh)$. We may assume that $X, \Xh$ are subspaces of some
	separable, metric space $(E,r_E)$ such that $\dPr(\mu,\muh) < \delta'$.
	By definition of $\Mde$, there is $\mu'\le \mu$ with
	$\|\mu-\mu'\|\le \eps$ and $\smallx':=(X,r,\mu') \in \DEMI$. Due to \lemref{rectangle}, we find $\muh'
	\le \muh$ with $\|\muh-\muh'\|\le \eps$ and $\dPr(\mu',\muh') < \delta'$, where $\smallxh{}'=(\Xh,\rh,\muh')$.
	By the coupling representation of the Prohorov metric, \eqref{eq:Prcouple},
	we obtain a measure $\xi$ on $(E\times I)^2$ with marginals $\xi_1\le \mu'$ and $\xi_2\le \muh'$ such that
	$\|\muh' - \xi_2\|\le \delta'$ and
	\begin{equation}
	\lbeq{xi-zero}
		\xi\(\bset{\((x,u), (\xh, \uh)\) \in (X \times I) \times (\Xh \times I)}{ r_E(x,\xh)+d(u,\uh) \ge \delta'}\) = 0.
	\end{equation}
	By definition, $\mupsq$ is supported by $A_{\delta,\eps}^X$. Therefore, the same is true for
	$\xi_1^{\otimes 2}$ and
	we obtain
  \begin{align}
  	\|\xi_2^{\otimes 2} \| &= \|\xi^{\otimes 2} \|
		= \xi^{\otimes 2}\(\bset{(x_i, u_i,\xh_i,\uh_i)_{i=1,2} \in ((X \times I) \times (\Xh \times I))^2}{(x_i,u_i)_{i=1,2} \in \Ade}\) \\
		&\le \xi^{\otimes 2}_2\(\bset{(\xh_i,\uh_i)_{i=1,2} \in (\Xh\times I)^2}{ r_E(\xh_1,\xh_2) \geq \delta-2\delta'
			\text{ or } d(\uh_1,\uh_2) \leq \eps+2\delta'}\) \nn\\
		&= \xi_2^{\otimes 2}(A_{\delta-2\delta',\eps+2\delta'}^{\Xh}), \nn
  \end{align}
  where the inequality follows from \eqref{xi-zero} together with the triangle-inequality.
  Therefore, 
	$(\Xh,\rh,\xi_2) \in \DEMIfull{\delta-2\delta'\!}{\,\eps+2\delta'}$.
  Now the claim follows from $\|\muh - \xi_2\| \le \|\muh-\muh'\| + \|\muh'-\xi_2\| \le \eps + \delta'$.
\end{proof}

\begin{prop}[estimates on $\beta$]\label{p:betaestim}
	Let\/ $\delta, \eps >0$ and consider $\smallx=(X,r,\mu)\in \MMI$.
	Then the following hold:
\begin{enumerate}
	\item\label{it:varLip} If\/ $\mu'\in\Mf(X \times I)$, then\/ $\beta(\smallx) \le \beta\((X,r,\mu')\) + 2\|\mu-\mu'\|$.
	\item\label{it:DEMIest} If\/ $\smallx\in \DEMI[2\delta]$, then\/ $\beta(\smallx) \le \eps\|\mu\|$.
	\item\label{it:estimate-beta-3} If\/ $\smallx\in \Mde[2\delta]$ and\/ $\smallxh\in\MMI$ with\/
		$\dmGP(\smallx, \smallxh)<\delta$, then\/ $\beta(\smallx) \le \eps\(\|\mu\|+2\)$ and 
	\begin{equation}\label{eq:ballest}
		\beta(\smallxh) \le (\eps+2\delta)(2+\|\mu\| + \delta).
	\end{equation}
\end{enumerate}
\end{prop}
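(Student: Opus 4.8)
The three estimates build on one another, and I would prove them in the stated order, item~\itref{varLip} being the workhorse. \textbf{Item~\itref{varLip}.} First I would reduce to ordered measures. Setting $\mu'':=\mu\wedge\mu'$, the lattice meet, one has $\mu''\le\mu$, $\mu''\le\mu'$ and $\|\mu-\mu''\|+\|\mu'-\mu''\|=\|\mu-\mu'\|$, so it suffices to prove the two-sided bound
\begin{equation}
	\betrag{\beta\((X,r,\tau)\)-\beta\((X,r,\sigma)\)}\le 2\|\tau-\sigma\|\qquad\text{whenever }\sigma\le\tau,
\end{equation}
since chaining through $\mu''$ then gives $\beta(\smallx)\le\beta\((X,r,\mu'')\)+2\|\mu-\mu''\|\le\beta\((X,r,\mu')\)+2\|\mu-\mu'\|$. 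To prove the ordered bound, write $\tau=\sigma+\gamma$ with $\gamma:=\tau-\sigma\ge0$, disintegrate over the $X$-marginals, and use that the kernel of $\tau$ is the convex combination $K^\tau_x=a(x)K^\sigma_x+b(x)K^\gamma_x$ with $a+b=1$ ($\nu_\tau$-a.e.), where $a,b$ are the densities of the $X$-marginals of $\sigma,\gamma$ w.r.t.\ that of $\tau$. Expanding $\beta_{K^\tau_x}$ as a quadratic form and subtracting $\beta\((X,r,\sigma)\)=\int_X a\,\beta_{K^\sigma_x}\,\nu_\tau(\d x)$, every surviving term carries a factor $b$; since $\beta_{K^\sigma_x},\beta_{K^\gamma_x}$ and the cross term all lie in $[0,1]$ (here it is essential that $1\land d$ vanishes on the diagonal, so $\beta_\xi\le1$ for probability $\xi$), the upper bound follows from $\int_X b\,(a+1)\,\nu_\tau(\d x)\le 2\int_X b\,\nu_\tau(\d x)=2\|\gamma\|$, and an analogous estimate gives the lower bound.

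\textbf{Item~\itref{DEMIest}.} Here the point is to bridge from the independent-pair condition to same-point marks. The hypothesis $\smallx\in\DEMI[2\delta]$ says $\musq$ is carried by $\Adtwoe$, i.e.\ $r(x,y)\ge2\delta$ or $d(u,v)\le\eps$ for $\musq$-a.e.\ pair, whereas $\beta(\smallx)=\int_X\beta_{K_x}\,\nu(\d x)$ only involves marks at a common point. I would partition $X$ into countably many Borel sets $X_j$ of diameter $<2\delta$ (disjointify a countable cover by balls of radius $\delta/2$, using separability). For $x,y$ in the same $X_j$ one has $r(x,y)<2\delta$, so the support condition forces $d(u,v)\le\eps$ for $(\mu\restricted{X_j\times I})^{\otimes 2}$-a.e.\ pair; passing to the $I$-marginal $\eta_j$ of $\mu\restricted{X_j\times I}$ yields $\eta_j^{\otimes 2}(\{d>\eps\})=0$, hence $\diam(\supp\eta_j)\le\eps$. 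As $\eta_j=\int_{X_j}K_x\,\nu(\d x)$, the kernel $K_x$ is supported on $\supp\eta_j$ for $\nu$-a.e.\ $x\in X_j$, so $\beta_{K_x}\le\eps$ there; summing $\int_{X_j}\beta_{K_x}\,\nu(\d x)\le\eps\,\nu(X_j)$ over $j$ gives $\beta(\smallx)\le\eps\|\mu\|$.

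\textbf{Item~\itref{estimate-beta-3}.} This is bookkeeping on top of the first two items. For the bound on $\beta(\smallx)$, pick (by definition of $\Mde[2\delta]$) a measure $\mu'\le\mu$ with $\|\mu-\mu'\|\le\eps$ and $(X,r,\mu')\in\DEMI[2\delta]$; then item~\itref{DEMIest} gives $\beta\((X,r,\mu')\)\le\eps\|\mu'\|\le\eps\|\mu\|$ and item~\itref{varLip} upgrades this to $\beta(\smallx)\le\eps\|\mu\|+2\eps=\eps(\|\mu\|+2)$. For $\beta(\smallxh)$, set $\delta':=\dmGP(\smallx,\smallxh)<\delta=\tfrac12(2\delta)$; \lemref{perturb} then gives $\smallxh\in\Mdefull[2(\delta-\delta')]{\eps+2\delta'}$, and repeating the two-step argument for $\smallxh$ yields $\beta(\smallxh)\le(\eps+2\delta')(\|\muh\|+2)$. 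Bounding $\|\muh\|\le\|\mu\|+\delta'$ (the Prohorov, hence the marked Gromov--Prohorov, metric controls total mass) and using monotonicity in $\delta'<\delta$ gives $\beta(\smallxh)\le(\eps+2\delta)(2+\|\mu\|+\delta)$.

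The step I expect to be the main obstacle is item~\itref{varLip}: obtaining the \emph{mass-independent} constant $2$ for a functional that is quadratic in $\mu$ is the crux, and it relies both on the reduction to ordered measures via the lattice meet and on the vanishing of $1\land d$ on the diagonal in the mixture computation. The fibrewise passage in item~\itref{DEMIest}, turning the independent-pair support condition into a statement about marks at a common point, is the other delicate point, whereas item~\itref{estimate-beta-3} is then routine.
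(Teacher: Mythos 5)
Your items \itref{DEMIest} and \itref{estimate-beta-3} are correct and take essentially the paper's route: the paper proves \itref{estimate-beta-3} exactly as you do (combine \itref{varLip} and \itref{DEMIest} to get $\beta(\smallx)\le\eps(\|\mu\|+2)$, then apply \lemref{perturb} with $\delta'=\dmGP(\smallx,\smallxh)$ and repeat, using $\|\muh\|\le\|\mu\|+\delta'$), and for \itref{DEMIest} your partition of $X$ into Borel sets of diameter $<2\delta$ is a legitimate --- in fact more careful --- way of passing from the independent-pair condition $\musq(\Adtwoe)=\|\musq\|$ to the same-fibre statement $\beta_{K_x}\le\eps$, a passage the paper's one-line proof leaves implicit.

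The gap is in item \itref{varLip}, at exactly the step you flag as the crux. Your ordered bound $|\beta\((X,r,\tau)\)-\beta\((X,r,\sigma)\)|\le2\|\tau-\sigma\|$ for $\sigma\le\tau$ is fine, but the reduction to it is not: with the paper's definition $\|\lambda\|=\sup_B|\lambda(B)|$ one has $\|\mu-\mu'\|=\max\(\|\mu-\mu''\|,\|\mu'-\mu''\|\)$ for $\mu''=\mu\wedge\mu'$, \emph{not} the sum. (Take $\mu=\delta_a$, $\mu'=\delta_b$, $a\ne b$: then $\mu''=0$, both summands equal $1$, but $\|\mu-\mu'\|=1$.) Consequently your chaining only yields $\beta(\smallx)\le\beta\((X,r,\mu')\)+2\(\|\mu-\mu''\|+\|\mu'-\mu''\|\)\le\beta\((X,r,\mu')\)+4\|\mu-\mu'\|$, i.e.\ constant $4$ instead of $2$, whenever $\mu$ and $\mu'$ are not comparable. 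The repair sits inside your own mixture expansion: since $f=1\land d$ satisfies the triangle inequality and $K^\gamma_x$ is a probability measure, $2(K^\sigma_x\otimes K^\gamma_x)(f)\ge\beta_{K^\sigma_x}$, so the cross term dominates the negative term and the expansion in fact shows monotonicity, $\beta\((X,r,\sigma)\)\le\beta\((X,r,\tau)\)$ for $\sigma\le\tau$. Then $\beta(\smallx)\le\beta\((X,r,\mu'')\)+2\|\mu-\mu''\|\le\beta\((X,r,\mu')\)+2\|\mu-\mu'\|$, because $\|\mu-\mu''\|=(\mu-\mu')^+(X\times I)\le\|\mu-\mu'\|$ and $\mu''\le\mu'$. (For what it is worth, every invocation of \itref{varLip} in the paper has $\mu'\le\mu$, where your argument is already complete; the paper itself dismisses \itref{varLip} with ``follows directly from the definition''.)
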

\begin{enproof}
\item follows directly from the definition.
\item If $x \in X$ and $u,v\in I$ satisfy $\((x,u),\,(x,v)\) \in \Adtwoe$, then $d(u,v) \le \eps$ by definition of $\Adtwoe$.
	Thus $\beta(\smallx) = \probintamu{X\times I}{\probinta{I}{(1\land d(u,v))}{K_x}{v}}{(x,u)} \le \eps\|\mu\|$.
\item Combining \itref{varLip} and \itref{DEMIest} yields $\beta(\smallx) \le 2\eps + \eps\|\mu\|$.
	Let $\delta'=\dmGP(\smallx,\smallxh)$.  By \lemref{perturb}, we have
	$\smallxh\in \Mdefull[2\delta-2\delta'\!]{\,\eps+2\delta'}$ and thus $\beta(\smallxh) \le
	(2+\|\muh\|)(\eps+2\delta')\le(2+\|\mu\|+\delta)(\eps+2\delta)$.
\end{enproof}

In order to complete the proof of \propref{usc} with the help of \propref{betaestim}, we first observe that, as
a consequence of Lusin's theorem, every functionally marked metric measure space is an element of $\Mh$ for some
$h\in\CH$. Together with \lemref{Mhclosed} below, this means that we have a nice (though uncountable)
decomposition of $\FMI$ into closed sets.

\begin{lemma}[decomposition of $\FMI$]\label{lem:cupMh}
	The following equality holds: $\FMI=\bigcup_{h\in\CH} \Mh$.
\end{lemma}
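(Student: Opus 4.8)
The plan is to prove the two inclusions $\bigcup_{h\in\CH}\Mh\subseteq\FMI$ and $\FMI\subseteq\bigcup_{h\in\CH}\Mh$ separately. The first is a direct consequence of the estimates in \propref{betaestim}, while the second rests on Lusin's theorem and constitutes the main work.

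For $\Mh\subseteq\FMI$ with a fixed $h\in\CH$, let $\smallx=(X,r,\mu)\in\Mh=\bigcap_{\delta>0}\Mdh$. Fixing $\delta>0$, the membership $\smallx\in\Mdh[2\delta]$, which is $\Mde[2\delta]$ with $\eps:=h(2\delta)$, together with the trivial bound $\dmGP(\smallx,\smallx)=0<\delta$ lets me invoke the first conclusion of \propref{betaestim}\itref{estimate-beta-3} to obtain $\beta(\smallx)\le h(2\delta)\(\|\mu\|+2\)$. Since $h\in\CH$ is continuous at $0$ with $h(0)=0$, letting $\delta\downarrow0$ forces $\beta(\smallx)=0$. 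By the first part of \propref{usc} we have $\beta^{-1}(0)=\FMI$, so $\smallx\in\FMI$.

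For the reverse inclusion, let $\smallx=(X,r,\nu,\kappa)=(X,r,\mu)\in\FMI$, so that $\mu=(\mathrm{id}_X,\kappa)_*\nu$ is the graph measure of $\kappa$ and $\mu^{\otimes2}$-almost every pair has the form $\((x,\kappa(x)),(y,\kappa(y))\)$. I apply Lusin's theorem to obtain, for each $m\in\N$, a compact set $K_m\subseteq X$ with $\nu(X\setminus K_m)<2^{-m}$ on which $\kappa$ is continuous; since $K_m$ is compact, $\kappa\restricted{K_m}$ is uniformly continuous and hence admits an increasing modulus of continuity $\omega_m\in\CH$, and $\kappa(K_m)$ is bounded with $D_m:=\diam(\kappa(K_m))<\infty$. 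Using continuity of each $\omega_m$ at $0$, I pick a strictly decreasing sequence $\delta_m\downarrow0$ with $\omega_m(\delta)\le2^{-m}$ for all $\delta\le\delta_m$, and I define $h\in\CH$ to be a continuous, increasing function with $h(0)=0$, with $h(\delta_{m+1})\ge2^{-m}$ for all $m$ (for instance piecewise linear through the points $(\delta_{m+1},2^{-m})$ accumulating at the origin), and with $h(\delta)\ge\max\{\tfrac12,D_1\}$ for $\delta\ge\delta_1$.

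It then remains to verify $\smallx\in\Mdh$ for every $\delta>0$. Given $\delta$, choose the index $m$ with $\delta\in(\delta_{m+1},\delta_m]$ (taking $m=1$ for $\delta>\delta_1$) and set $\mu':=\mu\restricted{K_m\times I}\le\mu$. Then $\|\mu-\mu'\|=\nu(X\setminus K_m)<2^{-m}\le h(\delta)$, and $(\mu')^{\otimes2}$ is carried by pairs $\((x,\kappa(x)),(y,\kappa(y))\)$ with $x,y\in K_m$; for such a pair either $r(x,y)\ge\delta$, or $r(x,y)<\delta$ and then $d(\kappa(x),\kappa(y))\le\omega_m(r(x,y))\le\omega_m(\delta)\le h(\delta)$ by the choice of $\delta_m$ and $h$ (on the top interval one uses $\omega_1\le D_1\le h(\delta)$). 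Hence $(\mu')^{\otimes2}(\Adh)=\|(\mu')^{\otimes2}\|$, i.e.\ $(X,r,\mu')\in\DEMIfull{\delta}{h(\delta)}$, so $\smallx\in\Mdh$; as $\delta$ was arbitrary, $\smallx\in\Mh\subseteq\bigcup_{h\in\CH}\Mh$. I expect the main obstacle to be precisely the construction of this single modulus $h$: one must interlace the countably many Lusin sets $K_m$ with their individual moduli $\omega_m$ by a diagonal argument so that, simultaneously across all scales $\delta$, both the neglected mass $\nu(X\setminus K_m)$ and the modulus value $\omega_m(\delta)$ are dominated by $h(\delta)$, while the boundedness of $\kappa$ on each compact $K_m$ disposes of the harmless large-$\delta$ regime.
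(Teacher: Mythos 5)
Your proof is correct and follows essentially the same route as the paper: the inclusion $\Mh\subseteq\FMI$ via \propref{betaestim}\itref{estimate-beta-3} with $\eps=h(2\delta)$, and the reverse inclusion via Lusin's theorem. The only difference is cosmetic: you combine the Lusin moduli by a discrete dyadic interlacing of scales $\delta_m$, whereas the paper packages the same idea into the single formula $h(\delta):=\inf\bset{\eps>0}{h_\eps(\delta)<\eps}$.
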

\begin{proof}
	We have $\Mh\subseteq\beta^{-1}(0)=\FMI$ for every $h\in\CH$. Indeed, the equality was
	shown in the first part of the proof of \propref{usc}. To obtain the inclusion, that is
	$\beta(\smallx)=0$ for all $\smallx \in \Mh$, recall $\Mh$ from \defref{EpsMI} and choose
	$\eps=h(2\delta)$ in \propref{betaestim}\itref{estimate-beta-3}.
	
	Conversely, let $\smallx=(X,r,\nu,\kappa)\in \FMI$. According to Lusin's theorem, we find for every
	$\eps>0$ a compact set $K_\eps\subseteq X$, and a modulus of continuity $h_\eps\in\CH$, such that
	$\nu(X\setminus K_\eps) \le \eps$ and $\kappa\restricted{K_\eps}$ is $h_{\eps}$-uniformly continuous.
	In particular,
	\begin{equation}\label{eq:xinm}
		\smallx\in \Mdefull{h_\eps(\delta)\lor\eps} \quad\forall \eps,\delta>0.
	\end{equation}
	We may assume without loss of generality that $\eps\mapsto h_\eps(\delta)$ is decreasing and
	right-continuous for every $\delta>0$. We define
	\begin{equation}
		h(\delta) := \inf\bset{\eps>0}{h_\eps(\delta) < \eps} \in \R_+\cup\{\infty\}.
	\end{equation}
	Clearly, $h(\delta)$ converges to $0$ as $\delta\downarrow0$ because $h_\eps\in\CH$.
	Furthermore, $h_{h(\delta)}(\delta) \le h(\delta)$, and hence \eqref{eq:xinm} with $\eps=h(\delta)$
	implies $\smallx \in \Mh$.
\end{proof}

\begin{proof}[Proof of \propref{usc} (completion).]
	We still have to show continuity of $\beta$ in $\smallx \in \beta^{-1}(0)$.
	Due to \lemref{cupMh}, there is $h\in\CH$ with $\smallx\in\Mh$. Now \propref{betaestim} yields for
	$\delta>0$ the estimate
	$\sup_{\smallxh\in\BMMI(\smallx)}\beta(\smallxh) \le (h(2\delta) + 2\delta)(2+\|\mu\| + \delta)$,
	which converges to $0$ as $\delta \downarrow 0$.
\end{proof}

It directly follows from \propref{betaestim}\itref{estimate-beta-3} that the marked Gromov-weak closure of $\Mh$ is
contained in $\FMI$. In fact, $\Mh$ is even Gromov-weakly closed, which will be used in the
proof of \thmref{modcadlag} below.

\begin{lemma}[closedness of $\Mh$]\label{lem:Mhclosed}
	For every\/ $\delta,\eps>0$, $\Mde$ is marked Gromov-weakly closed in\/ $\MMI$. In particular, $\Mh$ is closed
	for every\/ $h\in\CH$.
\end{lemma}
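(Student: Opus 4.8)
I would prove the first assertion---closedness of $\Mde$ for each fixed $\delta,\eps>0$---from which the ``in particular'' is immediate, since $\Mh=\bigcap_{\delta>0}\Mdh$ is an intersection of sets $\Mde$ (with $\eps=h(\delta)$) and arbitrary intersections of closed sets are closed. So let $\folge\smallx$ be a sequence in $\Mde$ with $\smallx_n\tomGw\smallx=(X,r,\mu)$; the goal is $\smallx\in\Mde$. The first move is to transfer membership from the $\smallx_n$ to the limit on a single space via the perturbation lemma \lemref{perturb}: writing $\delta_n':=\dmGP(\smallx_n,\smallx)\to0$ and applying \lemref{perturb} with $\smallx_n\in\Mde$ as the space and $\smallx$ as the perturbation, I get $\smallx\in\Mdefull[\delta-2\delta_n']{\eps+2\delta_n'}$ for all $n$ with $\delta_n'<\tfrac12\delta$. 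Setting $\gamma_n:=2\delta_n'\downarrow0$, everything reduces to a statement on the \emph{fixed} mmm-space $(X,r,\mu)$: if $(X,r,\mu)\in\Mdefull[\delta-\gamma_n]{\eps+\gamma_n}$ for some $\gamma_n\downarrow0$, then $(X,r,\mu)\in\Mde$.

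For the reduced statement the plan is a weak-compactness argument on submeasures of $\mu$. By definition there are $\mu_n'\le\mu$ with $\|\mu-\mu_n'\|\le\eps+\gamma_n$ such that $(\mu_n')^{\otimes2}$ charges no mass to the open set $U_n:=\set{(x_i,u_i)_{i=1,2}\in(X\times I)^2}{r(x_1,x_2)<\delta-\gamma_n,\ d(u_1,u_2)>\eps+\gamma_n}$, the complement of $A_{\delta-\gamma_n,\eps+\gamma_n}^X$. The family $\set{\mu'\in\Mf(X\times I)}{\mu'\le\mu}$ is uniformly tight (dominated by the tight measure $\mu$) with bounded masses, hence relatively weakly compact by Prohorov's theorem, and it is weakly closed because $\mu-\mu_n'\ge0$ survives the weak limit. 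Passing to a subsequence, $\mu_n'\tow\mu'$ with $\mu'\le\mu$. Since the total mass is continuous under weak convergence of finite measures, $\|\mu'\|=\lim\|\mu_n'\|\ge\|\mu\|-\eps$, so that $\|\mu-\mu'\|=\|\mu\|-\|\mu'\|\le\eps$.

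It then remains to show that $(\mu')^{\otimes2}$ charges no mass to $U:=\set{(x_i,u_i)_{i=1,2}\in(X\times I)^2}{r(x_1,x_2)<\delta,\ d(u_1,u_2)>\eps}$, the open complement of $\Ade$. I would use that $U_n\uparrow U$ as $\gamma_n\downarrow0$: for fixed $m$ and every $k\ge m$ one has $U_m\subseteq U_k$, hence $(\mu_k')^{\otimes2}(U_m)\le(\mu_k')^{\otimes2}(U_k)=0$; because $\mu_k'\tow\mu'$ forces $(\mu_k')^{\otimes2}\tow(\mu')^{\otimes2}$, the portmanteau inequality for the \emph{open} set $U_m$ gives $(\mu')^{\otimes2}(U_m)\le\liminf_k(\mu_k')^{\otimes2}(U_m)=0$. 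Letting $m\to\infty$ and using $U_m\uparrow U$ yields $(\mu')^{\otimes2}(U)=0$, i.e.\ $(X,r,\mu')\in\DEMI$, which together with $\|\mu-\mu'\|\le\eps$ gives $(X,r,\mu)\in\Mde$, as desired. I expect the main obstacle to be orienting the one-sided estimates: the perturbation lemma relaxes \emph{both} parameters ($\delta-\gamma_n<\delta$ and $\eps+\gamma_n>\eps$), so one only ever lands in \emph{larger} sets, and regaining membership in the smaller target $\Mde$ is exactly what necessitates the weak limit and the use of the open sets $U_m$---their closed analogues would make the portmanteau inequality run the wrong way.
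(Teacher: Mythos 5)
Your proof is correct, but it takes a genuinely different route from the paper's. The paper's proof uses \lemref{seqembed} to realize all the $\smallx_n$ and the limit $\smallx$ as subspaces of one common separable metric space $(E,r_E)$ with $\mu_n\tow\mu$; it then extracts, exactly as you do, a weak limit $\mu'\le\mu$ of the witnessing submeasures $\mu_n'\le\mu_n$ (tightness plus $\|\mu-\mu'\|=\lim\|\mu_n\|-\|\mu_n'\|\le\eps$), and concludes in one line because every $\munpsq$ is supported by the \emph{single fixed closed} set $\Ade[E]$, so the closed-set half of the portmanteau theorem applies directly. You avoid the common embedding and instead invoke \lemref{perturb} to pull membership back to the fixed limit space $(X,r,\mu)$; this costs you the relaxed parameters $(\delta-\gamma_n,\eps+\gamma_n)$ and forces the extra limiting step via the increasing open sets $U_m$ and the open-set half of portmanteau — and you correctly diagnose that the one-sidedness of the perturbation estimates is exactly why the open sets, not their closed analogues, must be used. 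Both arguments share the essential compactness step (Prohorov for the family of submeasures dominated by a fixed, respectively convergent, sequence of measures); the paper's version is shorter because $\Ade[E]$ does not move with $n$, while yours reuses \lemref{perturb} (proved anyway for other purposes) and isolates the reusable fact that $\bigcap_n\Mdefull[\delta-\gamma_n]{\eps+\gamma_n}\subseteq\Mde$ whenever $\gamma_n\downarrow0$. Two minor points to tidy up: pass to a subsequence so that $\gamma_n:=2\delta_n'$ is genuinely decreasing (you need $\gamma_k\le\gamma_m$ for the inclusion $U_m\subseteq U_k$, and the case $\gamma_n=0$ should be dispatched separately since then $\smallx\in\Mde$ trivially); and the closedness of $\set{\mu'}{\mu'\le\mu}$ under weak limits deserves the one-line justification via the open-set portmanteau inequality together with outer regularity of $\mu$.
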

\begin{proof}
	Fix $\eps,\delta>0$ and let $\folge\smallx$ be a sequence in $\Mde$ converging marked Gromov-weakly to
	some $\smallx=(X,r,\mu)\in\MMI$. Using \lemref{seqembed}, we may assume that $X_n$, $n\in\N$, and $X$ are
	subspaces of a common separable, metric space $(E,r_E)$, such that $\mu_n \tow \mu$ on $E\times I$. By
	definition of $\Mde$, we find $\mu_n' \le \mu_n$, $\|\mu_n' - \mu_n\| \le \eps$, such that $\munpsq$ is
	supported by $\Ade[E]$ for all $n\in\N$. Since $(\mu_n')_{n\in\N}$ is tight, we may assume, by passing
	to a subsequence, that $\mu_n' \tow \mu'$ for some $\mu'\in \Mf(E)$. Obviously, $\mu'\le \mu$ and
	$\|\mu-\mu'\| = \nlim \|\mu_n\| - \|\mu'_n\|  \le \eps$. Because $\Ade[E]$ is closed, $\mupsq$ is
	supported by $\Ade[E]$ and hence $\smallx \in \Mde$.
\end{proof}

\section{Criteria for the existence of mark functions}\label{sec:criteria}

Based on the construction of the complete metric and the decomposition $\FMI=\bigcup_{h\in\CH}\Mh$ into closed
sets obtained in \secref{Polish}, we now derive criteria to check if a marked metric measure space admits a mark
function, especially in the case where it is given as a marked Gromov-weak limit.
We then transfer the results to random mmm-spaces and $\MMI$-valued stochastic processes.

\subsection{Deterministic criteria} \label{sub:criteria-1}

Our main criterion for deterministic spaces is a direct consequence of the results in \secref{Polish}.
Recall that $\CH$ is the set of moduli of continuity defined in \eqref{eq:def-H}.

\begin{theorem}[characterization of existence of a mark function in the limit]\label{t:modulus}
	Let\/ $\folge\smallx$ be a sequence in\/ $\MMI$ with\/ $\smallx_n \tomGw \smallx\in\MMI$.
	Then $\smallx \in \FMI$ if and only if there exists\/ $h\in\CH$ such that for every\/ $\delta>0$ 
	\begin{equation}\label{eq:mod}
		\smallx_n \in \Mdh \quad\text{ for infinitely many\/ $n\in\N$.}
	\end{equation}
	In this case, $\smallx\in \Mh$. 
\end{theorem}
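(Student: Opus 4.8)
The plan is to separate the two implications and to settle the ``in this case'' claim together with the easier direction. First I would prove that whenever some $h\in\CH$ satisfies \eqref{eq:mod}, the limit lies in $\Mh$ (which in particular gives $\smallx\in\FMI$ and the final assertion). Fix such an $h$ and a $\delta>0$. By hypothesis $\smallx_n\in\Mdh$ for infinitely many $n$, so $\smallx$ is the marked Gromov-weak limit of a subsequence lying in $\Mdh=\Mh[\delta,h(\delta)]$. Since this set is marked Gromov-weakly closed by \lemref{Mhclosed}, we conclude $\smallx\in\Mdh$. As $\delta>0$ was arbitrary, $\smallx\in\bigcap_{\delta>0}\Mdh=\Mh$, and $\Mh\subseteq\FMI$ by \lemref{cupMh}.

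For the converse, assume $\smallx\in\FMI$. By the decomposition $\FMI=\bigcup_{h\in\CH}\Mh$ of \lemref{cupMh}, there is some $h_0\in\CH$ with $\smallx\in\Mh[h_0]$, i.e.\ $\smallx\in\Mh[\delta_0,h_0(\delta_0)]$ for every $\delta_0>0$. The idea is then to transfer this membership to the approximating spaces using the perturbation estimate of \lemref{perturb}, and to absorb the resulting errors into a slightly enlarged modulus of continuity. Write $\delta_n':=\dmGP(\smallx_n,\smallx)$, which tends to $0$ because $\dmGP$ induces the marked Gromov-weak topology. Fix a target $\delta>0$. I would apply \lemref{perturb} not at radius $\delta$ but at the slightly larger radius $\delta_0:=\delta+2\delta_n'$: since $\smallx\in\Mh[\delta+2\delta_n',h_0(\delta+2\delta_n')]$ and the hypothesis $\delta_n'<\tfrac12(\delta+2\delta_n')$ reduces to $0<\tfrac12\delta$ and so holds trivially, the lemma yields
\[
	\smallx_n\in\Mh[\delta,\,h_0(\delta+2\delta_n')+2\delta_n']
\]
for every $n$. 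The point of starting from $\delta_0=\delta+2\delta_n'$ is precisely that the radius argument shrinks back to exactly $\delta$.

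Finally I would pass to a single modulus valid for all $\delta$ at once. By continuity of $h_0$ at $\delta$ together with $\delta_n'\to0$, the second index $h_0(\delta+2\delta_n')+2\delta_n'$ converges to $h_0(\delta)$. Setting $h:=h_0+\mathrm{id}\in\CH$, so that $h(\delta)=h_0(\delta)+\delta>h_0(\delta)$, we have $h_0(\delta+2\delta_n')+2\delta_n'\le h(\delta)$ for all large $n$; combined with the monotonicity of $\Mh[\delta,\eps]$ in $\eps$ this gives $\smallx_n\in\Mh[\delta,h(\delta)]=\Mdh$ for all large, hence infinitely many, $n$. Thus $h$ satisfies \eqref{eq:mod}, which completes the equivalence. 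The main obstacle is exactly this transfer step: membership in $\Mde$ is not preserved under passing to an approximating sequence, so one must carefully couple the perturbation lemma with the right enlarged radius and enlarged modulus so that one fixed $h\in\CH$ works simultaneously for every $\delta>0$; the two monotonicities (in $\delta$ and in $\eps$) and the continuity of $h_0$ are what make the bookkeeping close up.
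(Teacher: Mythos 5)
Your proposal is correct and follows essentially the same route as the paper: the forward direction via closedness of $\Mdh$ (\lemref{Mhclosed}) plus the decomposition $\FMI=\bigcup_{h\in\CH}\Mh$ (\lemref{cupMh}), and the converse via \lemref{cupMh} followed by the perturbation estimate of \lemref{perturb} with the error absorbed into an enlarged modulus. The only (harmless) difference is bookkeeping: the paper perturbs from radius $3\delta$ and takes $\hh(\delta)=h_0(3\delta)+2\delta$, valid for all $n$ with $\dmGP(\smallx,\smallx_n)<\delta$, whereas you perturb from radius $\delta+2\delta_n'$ and take $h=h_0+\mathrm{id}$, using continuity of $h_0$ to get the bound for all large $n$ — both rely on the same monotonicity of $\Mdefull{\delta}{\eps}$ in its two parameters.
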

\begin{proof}
	First assume there is $h\in\CH$ such that \eqref{eq:mod} is satisfied.
	Since $\Mdh$ is closed by \lemref{Mhclosed}, \eqref{eq:mod} implies that $\smallx \in \Mdh$ for every
	$\delta$, that is $\smallx \in \Mh$. By \lemref{cupMh}, $\Mh\subseteq \FMI$.

	Conversely, assume $\smallx\in\FMI$. Then, by \lemref{cupMh}, we find $h\in \CH$ with $\smallx \in \Mh$.
	We claim that \eqref{eq:mod} holds with $h$ replaced by $\hh(\delta) := h(3\delta) + 2\delta$.
	Indeed, fix $\delta>0$ and observe that $\smallx\in \Mh\subseteq\Mdh[3\delta]$.
	\lemref{perturb} yields $\smallx_n \in\Mdheps[\hh]$ for all $n$ with $\dmGP(\smallx, \smallx_n) < \delta$.
\end{proof}

We will use \thmref{modulus} in the following form.

\begin{corollary}\label{c:modulus}
	Let\/ $\smallx_n=(X_n, r_n, \nu_n, \kappa_n)\in\FMI$, $\smallx_n \tomGw \smallx\in \MMI$.
	Let\/ $Y_{n,\delta} \subseteq X_n$ measurable for\/ $n\in\N,\, \delta>0$, and\/ $h\in\CH$.
	Then $\smallx \in \FMI$ if the following two conditions hold for every\/ $\delta>0$:
	\begin{gather}
		\nliminf \nu_n(X_n\setminus Y_{n,\delta}) \le h(\delta), \label{eq:modulus1}\\
		\forall n\in\N,\, x,y\in Y_{n,\delta}: r_n(x,y) < \delta \implies
			d\(\kappa_n(x),\kappa_n(y)\) \le h(\delta). \label{eq:modulus2}
	\end{gather}
\end{corollary}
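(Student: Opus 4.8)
The plan is to deduce this from \thmref{modulus} by showing that conditions \eqref{eq:modulus1} and \eqref{eq:modulus2} force $\smallx_n\in\Mdh[\delta,\tilde h(\delta)]$ for infinitely many $n$, for a suitable modulus of continuity $\tilde h$ built from $h$. Fix $\delta>0$. The idea is that the set $Y_{n,\delta}$ plays the role of the ``good'' set on which $\kappa_n$ is controlled: restricting the measure to points sitting over $Y_{n,\delta}$ should produce a submeasure whose square is supported on $\Ade[X_n]$ (with appropriately scaled parameters), while the discarded mass $\nu_n(X_n\setminus Y_{n,\delta})$ is asymptotically bounded by $h(\delta)$.

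Concretely, I would first define $\mu_n' := \mu_n\restricted{Y_{n,\delta}\times I}$, where $\mu_n=\nu_n\otimes\delta_{\kappa_n(\cdot)}$ is the measure associated to the fmm-space $\smallx_n$. Then $\mu_n'\le\mu_n$ and $\|\mu_n-\mu_n'\| = \nu_n(X_n\setminus Y_{n,\delta})$. For two points $(x,\kappa_n(x)),(y,\kappa_n(y))$ with $x,y\in Y_{n,\delta}$, condition \eqref{eq:modulus2} says that $r_n(x,y)<\delta$ implies $d(\kappa_n(x),\kappa_n(y))\le h(\delta)$; contrapositively, either $r_n(x,y)\ge\delta$ or $d(\kappa_n(x),\kappa_n(y))\le h(\delta)$, which is exactly the defining condition of $\Ade[\delta,h(\delta)]{X_n}$ from \eqref{eq:epsgoodset}. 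Since $\mu_n'$ is concentrated on the graph of $\kappa_n$ restricted to $Y_{n,\delta}$, its square $(\mu_n')^{\otimes 2}$ is supported on $\Ade[\delta,h(\delta)]{X_n}$, so $(X_n,r_n,\mu_n')\in\DEMI[\delta]{h(\delta)}$.

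The remaining issue is the mass bound: the definition of $\Mde$ requires $\|\mu_n-\mu_n'\|\le h(\delta)$, but \eqref{eq:modulus1} only gives $\nliminf\nu_n(X_n\setminus Y_{n,\delta})\le h(\delta)$, an asymptotic and non-strict bound. I expect this to be the main (though minor) obstacle. To handle it, I would pass to the modulus $\tilde h(\delta) := h(\delta)+\delta$ (or any strictly larger continuous increasing function in $\CH$): then $\nliminf\|\mu_n-\mu_n'\| < \tilde h(\delta)$, so for infinitely many $n$ we have $\|\mu_n-\mu_n'\|\le\tilde h(\delta)$, giving $\smallx_n=(X_n,r_n,\mu_n')\in\DEMI[\delta]{h(\delta)}\subseteq\Mde[\delta]{\tilde h(\delta)}=\Mdh[\delta,\tilde h(\delta)]$ for infinitely many $n$. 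As this holds for every $\delta>0$ with the single fixed modulus $\tilde h\in\CH$, \thmref{modulus} applies and yields $\smallx\in\Mh[\tilde h]\subseteq\FMI$, completing the proof.
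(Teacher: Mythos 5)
Your proof is correct and follows essentially the same route as the paper's: restrict $\mu_n$ to $Y_{n,\delta}\times I$, observe that \eqref{eq:modulus2} places $(X_n,r_n,\mu_n')$ in $\DEMIfull{\delta}{h(\delta)}$, bound the discarded mass via \eqref{eq:modulus1}, and invoke \thmref{modulus}. Your additional step of enlarging the modulus to $\tilde h(\delta)=h(\delta)+\delta$, so that the non-strict asymptotic bound in \eqref{eq:modulus1} yields $\|\mu_n-\mu_n'\|\le\tilde h(\delta)$ for infinitely many $n$, is in fact more careful than the paper's one-line assertion of the same fact.
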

\begin{proof}
	Let $\mu_n' := \mu_n\restricted{Y_{n,\delta}\times I}$, where $\mu_n=\nu_n\otimes \delta_{\kappa_n}$.
	Then \eqref{eq:modulus2} implies $(X_n,r_n,\mu_n') \in\DEMIfull{\delta}{h(\delta)}$ and
	\eqref{eq:modulus1} yields $\|\mu_n' - \mu_n\| \le h(\delta)$ for infinitely many $n$.
	Hence we can apply \thmref{modulus}.
\end{proof}

\begin{remark}\label{r:seq}
	To obtain $\smallx \in\FMI$, it is clearly enough to show in \thmref{modulus} and \corref{modulus},
	\eqref{eq:mod} respectively \eqref{eq:modulus1} and \eqref{eq:modulus2} only for $\delta=\delta_m$ for
	a sequence $\folge[m]\delta$ with $\delta_m \downarrow 0$ as $m\to \infty$.
\end{remark}

We illustrate the r\^ole of the exceptional set $X_n \setminus Y_{n,\delta}$, and the importance of its dependence on $\delta$,
with a simple example.

\begin{example}
	Consider\/ $X=[0,1]$ with Euclidean metric $r$, $\nu=\lambda + \delta_0$, where $\lambda$ is
	Lebesgue-measure, and $\kappa_n(x) = (nx) \land 1$. Obviously, $\smallx_n=(X,r,\nu,\kappa_n)$
	converges marked Gromov-weakly and the limit admits the mark function $\1_{(0,1]}$. To see this from
	\corref{modulus}, we choose $h(\delta)=\delta$ and $Y_{n,\delta}=\{0\} \cup [\delta \vee \tfrac{1}{n} , 1]$. Note that we cannot choose
	$Y_{n,\delta}$ independent of $\delta$.
\end{example}

\begin{remark}[equicontinuous case]
	 If, in \corref{modulus}, $Y_{n,\delta}=Y_n$ does not depend on $\delta$, then \eqref{eq:modulus2}
	 means that $\kappa_n$ is $h$-uniformly continuous on $Y_n$. Consequently, the mark function of
	 $\smallx$ is in this case $h$-uniformly continuous. If we restrict to $Y_n = X_n$ for all $n$, we
	 recover part \itref{equicontcl} of \lemref{equicont}.
\end{remark}

\begin{corollary}\label{c:diamcrit}
	Let\/ $\smallx_n=(X_n, r_n, \nu_n, \kappa_n)\in \FMI$ and assume that\/ $\smallx_n$ converges to\/
	$\smallx=(X,r,\mu)\in \MMI$ marked Gromov-weakly. Further assume that for\/ $n\in\N,\, \delta>0$, there
	are measurable sets\/ $Z_{n,\delta}\subseteq X_n$, such that
	\begin{equation}\label{eq:diamcrit}
		\lim_{\delta\downarrow0}\, \nliminf\, \biggl(
			\nu_n(X_n\setminus Z_{n,\delta}) +
		  \probinta{Z_{n,\delta}}{\left( 1\land\diam\(\kappa_n\(B_{\delta}^{X_n}(x)\cap Z_{n,\delta}\)\)\right) }{\nu_n}{x}
		  \biggr) \,=\, 0,
	\end{equation}
	where $\diam$ is the diameter of a set.
	Then $\smallx$ admits a mark function, that is $\smallx \in \FMI$.
\end{corollary}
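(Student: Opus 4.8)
The plan is to deduce this from \corref{modulus} by shrinking each $Z_{n,\delta}$ to a set $Y_{n,\delta}$ on which the oscillation of $\kappa_n$ over $\delta$-balls is controlled \emph{pointwise}; the point is that the integral condition \eqref{eq:diamcrit} controls this oscillation only \emph{on average}. Write
\begin{equation}
  g_{n,\delta}(x) := 1\land \diam\(\kappa_n\(B_\delta^{X_n}(x)\cap Z_{n,\delta}\)\), \qquad
  F_{n,\delta} := \nu_n(X_n\setminus Z_{n,\delta}) + \int_{Z_{n,\delta}} g_{n,\delta}(x)\,\nu_n(\d x),
\end{equation}
and $\phi(\delta):=\nliminf F_{n,\delta}$, so that \eqref{eq:diamcrit} is exactly $\lim_{\delta\downarrow0}\phi(\delta)=0$. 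The integrand $g_{n,\delta}$ is measurable (this is implicit in the hypothesis), so all sets below are measurable.

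First I would fix the modulus of continuity. Since $\phi(\delta)\to0$, the increasing function $\delta\mapsto \delta\vee\sup_{0<s\le\delta}\sqrt{\phi(s)}$ tends to $0$ as well, dominates $\sqrt{\phi}$, and is strictly positive for $\delta>0$; by the remark following \eqref{eq:def-H} I may pick $h\in\CH$ dominating it. Thus $h(\delta)\ge\sqrt{\phi(\delta)}$ and $h(\delta)>0$ for $\delta>0$, while $h(\delta)<1$ for all small $\delta$. For such small $\delta$ I set
\begin{equation}
  Y_{n,\delta} := \bset{x\in Z_{n,\delta}}{g_{n,\delta}(x)\le h(\delta)}.
\end{equation}

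The verification of the two hypotheses of \corref{modulus} then splits into a mass estimate and a pointwise diameter estimate. For the mass, Markov's inequality applied to $g_{n,\delta}\ge0$ together with $h(\delta)\le1$ gives
\begin{equation}
  \nu_n(X_n\setminus Y_{n,\delta}) \le \nu_n(X_n\setminus Z_{n,\delta}) + \frac{1}{h(\delta)}\int_{Z_{n,\delta}} g_{n,\delta}(x)\,\nu_n(\d x) \le \frac{F_{n,\delta}}{h(\delta)};
\end{equation}
taking $\nliminf$ and using $\phi(\delta)\le h(\delta)^2$ yields $\nliminf\nu_n(X_n\setminus Y_{n,\delta})\le \phi(\delta)/h(\delta)\le h(\delta)$, which is \eqref{eq:modulus1}. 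For the diameter bound, if $x,y\in Y_{n,\delta}$ with $r_n(x,y)<\delta$ then both $x$ and $y$ lie in $B_\delta^{X_n}(x)\cap Z_{n,\delta}$, so $d\(\kappa_n(x),\kappa_n(y)\)\le \diam\(\kappa_n\(B_\delta^{X_n}(x)\cap Z_{n,\delta}\)\)$; since $g_{n,\delta}(x)\le h(\delta)<1$ the truncation at $1$ is inactive and the right-hand side equals $g_{n,\delta}(x)\le h(\delta)$, giving \eqref{eq:modulus2}. As both conditions hold for all sufficiently small $\delta$, \remref{seq} lets me invoke \corref{modulus} to conclude $\smallx\in\FMI$.

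The one delicate point is the balance encoded in the choice $h(\delta)\asymp\sqrt{\phi(\delta)}$: the trimming threshold must be large enough that Markov discards only mass of order $\phi(\delta)/h(\delta)$, yet small enough to serve as a genuine continuity modulus of order $h(\delta)$, and equalizing the two forces the square-root scaling. Turning $\sqrt{\phi}$ — which is a priori neither monotone nor continuous — into an honest element of $\CH$ is the only other technical step, and it is handled by the monotone-envelope argument above.
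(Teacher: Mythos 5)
Your proposal is correct and follows essentially the same route as the paper's proof: both trim $Z_{n,\delta}$ to the sublevel set $Y_{n,\delta}$ of the ball-diameter function via a Markov/Chebyshev estimate, choose the threshold $h$ so that $h^2$ dominates the (monotonized) quantity in \eqref{eq:diamcrit} — the paper's condition $g(\delta)\le\tfrac{h(\delta)}2(1\land h(\delta))$ is the same square-root balance you describe — and then invoke \corref{modulus}. The only cosmetic differences are that the paper builds the monotone envelope $g$ before choosing $h$ and keeps the $1\land{}$ truncation inside the definition of $Y_{n,\delta}$, whereas you dispose of it by noting $h(\delta)<1$ for small $\delta$; both are fine.
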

\begin{proof}
	For $\delta>0$ let
	\begin{equation}
	\label{eq:diamcrit1}
	  g(\delta) := \sup_{0<\delta'\leq\delta} \nliminf\, \biggl(
	  	\nu_n(X_n\setminus Z_{n,\delta'}) +
		  \probinta{Z_{n,\delta'}}{\left( 1\land\diam\(\kappa_n\(B_{\delta'}^{X_n}(x)\cap Z_{n,\delta'}\)\) \right)}{\nu_n}{x}
		  \biggr).
	\end{equation}
	By \eqref{eq:diamcrit}, $\lim_{\delta\downarrow0} g(\delta)=0$ and $g$ is increasing with
	$\|g\|_\infty \leq \|\mu\|$. Let $h\in\CH$ be such that $g(\delta) \leq \frac{h(\delta)}{2} \bigl( 1\land
	h(\delta))$ for all $\delta>0$. Then
	\begin{equation}\label{eq:diamcrit2}
		\nu_n\(\bset{x\in Z_{n,\delta}}{\diam\(\kappa_n(B_{\delta}^{X_n}(x)\cap Z_{n,\delta})\) > h(\delta)}\)
		\leq \frac{g(\delta)}{1\land h(\delta)} \leq h(\delta)/2.
	\end{equation}
	Now apply \corref{modulus} with
	\begin{equation} \label{eq:diamcrit3}
		Y_{n,\delta} := \bset{x \in Z_{n,\delta}}{\diam\(\kappa_n\(B_{\delta}^{X_n}(x)\cap
			Z_{n,\delta}\)\) \leq h(\delta)}. 
	\end{equation}
	Then \eqref{eq:modulus2} follows from the definition of $Y_{n,\delta}$ in \eqref{eq:diamcrit3},
	and $\nu_n(X_n\setminus Y_{n,\delta}) \le \nu_n(X_n\setminus Z_{n,\delta}) + h(\delta)/2 \le g(\delta) +
	h(\delta)/2 \le h(\delta)$ holds by \eqref{eq:diamcrit2} and \eqref{eq:diamcrit1}. 
\end{proof}

\subsection{Random fmm-spaces} \label{sub:criteria-2}

The following theorem is a randomized version of \thmref{modulus}. It is our main criterion for $\MMI$-valued
random variables.

\begin{theorem}[random fmm-spaces as limits in distribution]\label{t:rnd-modulus}
	Let\/ $\folge\X$ be a sequence of\/ $\MMI$-valued random variables which converges in distribution (w.r.t.\
	marked Gromov-weak topology) to an\/ $\MMI$-valued random variable\/ $\X$.
	Further assume that for every\/ $\eps>0$, there exists a modulus of continuity\/ $h_\eps\in \CH$ such
	that
	\begin{equation}\label{eq:pasgen}
		\deltalimsup\, \nlimsup\, \bPs{\X_n \in \Mdheps} \ge 1-\eps.
	\end{equation}
	Then\/ $\X$ admits almost surely a mark function, that is\/ $\X\in\FMI$ almost surely.

	If additionally\/ $\X_n=(X_n,r_n,\nu_n,\kappa_n)\in\FMI$ almost surely for all\/ $n\in\N$, we can replace
	\eqref{eq:pasgen} by existence of random measurable sets\/
	$Y_{n,\delta}^\eps \subseteq X_n$, $n\in\N,\, \delta>0$, in addition to the $h_\eps\in\CH$, such that
	the following two conditions hold for every\/ $\eps>0$:
	\begin{gather}
		\deltalimsup\, \nlimsup\, \bPs{\nu_n(X_n\setminus Y_{n,\delta}^\eps) \le h_\eps(\delta)}
			\ge 1-\eps. \label{eq:pas}\\
		\forall n\in\N,\, \delta>0,\, x,y\in Y_{n,\delta}^\eps: r_n(x,y) < \delta \implies
			d\(\kappa_n(x),\kappa_n(y)\) \le h_\eps(\delta). \label{eq:moduluseps}
	\end{gather}
\end{theorem}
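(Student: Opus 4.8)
The plan is to reduce the distributional statement to the deterministic criterion \thmref{modulus} by means of a Skorohod-type coupling together with a Borel--Cantelli argument along a sequence of moduli of continuity. The main conceptual point is that \eqref{eq:pasgen} gives, for each $\eps>0$, a uniform-in-$n$ lower bound on the probability that $\X_n$ lies in the closed set $\Mdheps$ for small $\delta$, and I must upgrade this to an almost-sure membership statement for the limit $\X$.

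First I would invoke the Skorohod representation theorem to realise the convergence in distribution $\X_n \tomGw \X$ as almost-sure marked Gromov-weak convergence on a common probability space; this is legitimate because $(\MMI,\dmGP)$ is Polish. Working on this space, I would fix $\eps>0$ and exploit the closedness of $\Mdheps$ (\lemref{Mhclosed}) to transfer the mass bound \eqref{eq:pasgen} to the limit. Concretely, for fixed $\delta$ the set $\Mdheps$ is marked Gromov-weakly closed, so $\{\X \in \Mdheps\}$ is measurable and, by the portmanteau inequality for closed sets applied to the almost-surely convergent sequence, $\P(\X \in \Mdheps) \ge \nlimsup \P(\X_n \in \Mdheps)$. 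Taking $\deltalimsup$ and using monotonicity of the events in $\delta$ (smaller $\delta$ is a stronger condition, so the sets decrease), I would conclude that $\P\bigl(\X \in \bigcap_{\delta>0}\Mdheps\bigr) = \P(\X \in \Mheps) \ge 1-\eps$.

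Next I would let $\eps = 1/k$ run through a countable sequence, producing moduli $h_{1/k}\in\CH$ and events $\{\X \in \Mheps[h_{1/k}]\}$ each of probability at least $1-1/k$. Since $\Mheps[h_{1/k}] \subseteq \FMI$ by \lemref{cupMh}, the event $\{\X \in \FMI\}$ contains $\bigcup_k \{\X \in \Mheps[h_{1/k}]\}$, whose probability is at least $\sup_k (1-1/k) = 1$. Hence $\X \in \FMI$ almost surely, which is the first assertion. The one subtlety to handle carefully is the interchange of $\deltalimsup$ with the portmanteau inequality: because the events are nested in $\delta$, I would first fix $\delta$, apply portmanteau, and only then pass to the $\delta$-limit, so that the limit over $\delta$ is taken after the deterministic closedness has done its work.

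For the second, more concrete, statement the plan is simply to verify that the hypotheses \eqref{eq:pas}--\eqref{eq:moduluseps} imply \eqref{eq:pasgen}, after which the first part applies verbatim. Exactly as in the proof of \corref{modulus}, I would set $\mu_n' := \mu_n\restricted{Y_{n,\delta}^\eps\times I}$ where $\mu_n=\nu_n\otimes\delta_{\kappa_n}$; then \eqref{eq:moduluseps} forces $(X_n,r_n,\mu_n')\in\DEMIfull{\delta}{h_\eps(\delta)}$ pathwise, while on the event $\{\nu_n(X_n\setminus Y_{n,\delta}^\eps)\le h_\eps(\delta)\}$ one has $\|\mu_n-\mu_n'\|\le h_\eps(\delta)$, so that $\X_n\in\Mdheps$ on that event. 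Consequently $\bPs{\X_n\in\Mdheps}\ge\bPs{\nu_n(X_n\setminus Y_{n,\delta}^\eps)\le h_\eps(\delta)}$, and \eqref{eq:pas} yields \eqref{eq:pasgen} directly. The hardest part of the whole argument is the first step, namely making the passage from ``uniform lower bound in probability'' to ``almost-sure membership in the $\eps$-indexed union'' rigorous; the Skorohod coupling plus the nestedness of the events in $\delta$ is what makes this clean, and everything else is bookkeeping.
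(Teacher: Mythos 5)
Your treatment of the second statement (reducing \eqref{eq:pas}--\eqref{eq:moduluseps} to \eqref{eq:pasgen} via $\mu_n':=\mu_n\restricted{Y_{n,\delta}^\eps\times I}$) is correct and is exactly the paper's one-line reduction to the argument of \corref{modulus}. The first part, however, breaks at the very step you flag as the crucial one: the sets $\Mdheps$ are \emph{not} monotone in $\delta$. For fixed tolerance $\eta$ the sets $\Mdefull[\delta]{\eta}$ do decrease as $\delta$ increases, but in $\Mdheps$ the tolerance $h_\eps(\delta)$ is coupled to $\delta$ and shrinks with it, and the two effects pull in opposite directions. Concretely, take $\delta'<\rho<\delta$ with $h_\eps(\delta)<1$, and let $\smallx$ consist of two unit atoms at mutual distance $\rho$ carrying marks at distance $1$: then $\smallx\in\DEMIfull{\delta'}{h_\eps(\delta')}\subseteq \Mdefull[\delta']{h_\eps(\delta')}$ (the pair is at distance $\ge\delta'$, so it is unconstrained), but $\smallx\notin\Mdefull[\delta]{h_\eps(\delta)}$ (any $\mu'\le\mu$ with $\|\mu-\mu'\|\le h_\eps(\delta)<1$ still charges both atoms, and the pair violates $A^X_{\delta,h_\eps(\delta)}$). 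So $\Mdefull[\delta']{h_\eps(\delta')}\not\subseteq\Mdefull[\delta]{h_\eps(\delta)}$, continuity from above does not apply, and $\deltalimsup\P(\X\in\Mdheps)\ge1-\eps$ does not yield $\P\bigl(\X\in\bigcap_{\delta>0}\Mdheps\bigr)\ge1-\eps$: a priori each scale can discard a \emph{different} $\eps$-portion of probability, and the intersection over countably many scales can lose everything. This non-summability across scales is precisely what Step~2 of the paper's proof is built to overcome: it invokes \eqref{eq:pasgen} at the levels $\eps2^{-(m+1)}$ to get failure probability $\eps2^{-m}$ at scale $\delta_m$, patches the moduli $h_{\eps2^{-(m+1)}}$ into a single $\hh_\eps\in\CH$, and only then applies \thmref{modulus} pathwise after a Skorohod coupling.

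Your portmanteau-on-the-limit route can in fact be salvaged, but not by monotonicity: choose $\delta_j\downarrow0$ realising the $\limsup$ in \eqref{eq:pasgen}, apply the portmanteau inequality to the closed sets $\Mdefull[\delta_j]{h_\eps(\delta_j)}$ to get $\P\(\X\in\Mdefull[\delta_j]{h_\eps(\delta_j)}\)\ge1-\eps-o(1)$, and then use reverse Fatou to conclude that with probability at least $1-\eps$ the space $\X$ lies in $\Mdefull[\delta_j]{h_\eps(\delta_j)}$ for infinitely many ($\omega$-dependent) $j$. Since $\Mdefull[\delta_j]{\eta}\subseteq\Mdefull[\delta]{\eta}$ for $\delta\le\delta_j$, such an $\X(\omega)$ lies in $\Mh[h']$ for an $\omega$-dependent modulus $h'\ge h_\eps$ built as in \remref{seq}, hence in $\FMI$ by \lemref{cupMh}. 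Note that this gives $\P(\X\in\FMI)\ge1-\eps$ but not, as you claim, $\P(\X\in\Mheps)\ge1-\eps$ with the original $h_\eps$. As written, the proposal has a genuine gap at its central step.
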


\begin{remark}
	In \eqref{eq:pas}, we need not worry about measurability of the ``event''
	$B_{n,\delta} := \bigl\{\nu_n(X_n\setminus Y_{n,\delta}^\eps) \le h_\eps(\delta)\bigr\}$  due to the choice of $Y_{n,\delta}^\eps$.
	The inequality \eqref{eq:pas} is to be understood in the sense of inner measure, that is we require that there are
	measurable sets $C_{n,\delta}\subseteq B_{n,\delta}$ with
	$\deltalimsup\nlimsup\P(C_{n,\delta}) \ge 1-\eps$.
\end{remark}

\begin{proof}
	The second statement follows in the same way as \corref{modulus}.
	We divide the proof of the main part in two steps. First, we show $\X\in\FMI$ if, instead of
	\eqref{eq:pasgen}, even
	\begin{equation}\label{eq:deltainside}
		\P\Bigl(\bigcap_{m\in\N} \bigl\{ \X_n \in \Mdmh \text{ for infinitely many $n$}\bigr\} \Bigr)
			\ge 1-\eps
	\end{equation}
	holds for a sequence $\delta_m=\delta_m(\eps)\downarrow 0$ as $m\to\infty$. In the second step, we show
	that, given \eqref{eq:pasgen}, we can modify $h_\eps$ to $\hh_\eps \in \CH$ such that
	\eqref{eq:deltainside} holds with $h_\eps$ replaced by $\hh_\eps$.
	\begin{proofsteps}
	\step By Skorohod's representation theorem, we may assume that the $\X_n$ are coupled such that
		they converge almost surely to $\X$ in the marked Gromov-weak topology.
		The inequality \eqref{eq:deltainside} implies that with probability at least $1-\eps$, for all
		$m \in \NN$, $\X_n \in \Mdmh$ for infinitely many $n$.
		By \thmref{modulus} and \remref{seq}, this means that the probability that $\X$ admits a mark
		function is at least $1-\eps$. Because $\eps$ is arbitrary, this implies $\X\in\FMI$ almost surely.
	\step Let $T(\eps,\delta):= \nlimsup\, \bPs{\X_n \in \Mdheps}$ in \eqref{eq:pasgen}. Set
	\begin{equation}
	  \delta_1 := \sup\bigl\{ \delta \in [0,1] : T(\eps/4,\delta) \ge 1-\eps/2 \mbox{ and } h_{\eps/4}(\delta)<1 \bigr\}. 
	\end{equation}
By \eqref{eq:pasgen} and as $h_{\eps/4} \in \CH$, the set inside the supremum is non-empty. Next define recursively
  \begin{equation}
    \delta_m := \sup\bigl\{ \delta \in [0,\delta_{m-1}/2] :
    	T(\eps 2^{-(m+1)},\delta) \ge 1-\eps 2^{-m} \mbox{ and } h_{\eps 2^{-(m+1)}}(\delta)<1/m \bigr\}
  \end{equation}
for $m \in \NN, m \geq 2$. Again, the set inside the supremum is non-empty by \eqref{eq:pasgen} and as $h_{\eps 2^{-(m+1)}} \in \CH$. Moreover, $\delta_m=\delta_m(\eps)>0$, $\delta_m \downarrow 0$ for $m \rightarrow \infty$ and $h_{\eps 2^{-(m+1)}}(\delta_m) \leq 1/m$ follows. We can therefore set
		\begin{equation}
			\hh_\eps(\delta_m) := h_{\eps2^{-(m+1)}}(\delta_m)
		\end{equation}
and extend this to $\hh_\eps\in\CH$.
		Using Fatou's lemma, we obtain
		\begin{align}
		\P\Bigl(\bigcup_{m\in\N} \bigl\{\X_n \not\in \Mdmh[\hh_\eps] \text{ eventually}\bigr\} \Bigr)
			& \le \sum_{m\in\N} \EE\(\nliminf \1_{\MMI\setminus\Mdmh[\hh_\eps]}(\X_n)\) \\
			& \le \sum_{m\in\N} \nliminf \bPs{\X_n \not\in \Mdmh[\hh_\eps]} \nn\\
			& = \sum_{m\in\N} \big( 1-T(\eps 2^{-(m+1)},\delta_m) \big) \nn\\
			& \le \sum_{m\in\N} \eps2^{-m} = \eps. \nn
		\end{align}
		Thus \eqref{eq:deltainside} holds with $h_\eps$ replaced by $\hh_\eps$.
	\end{proofsteps}
\end{proof}

\subsection{Fmm-space-valued processes} \label{sub:criteria-3}

Let $J\subseteq\R_+$ be a (closed, open or half-open) interval and consider a stochastic process
$\X=(\X_t)_{t\in J}$ with values in $\MMI$ and c\`adl\`ag paths, where $\MMI$ is equipped with the marked
Gromov-weak topology.
We say that $\X$ is an \emph{$\FMI$-valued c\`adl\`ag process} if
\begin{equation}\label{eq:fmi-val}
	\bPs{\X_t,\X_{t-} \in \FMI \text{ for all\/ } t\in J} = 1,
\end{equation}
where\/ $\X_{t-}$ is the left limit of\/ $\X$ at\/ $t$ ($\X_{\ell-} := \X_\ell$ if $\ell$ is the left endpoint of $J$).
In the following, we give sufficient criteria for $\X$ to be an $\FMI$-valued c\`adl\`ag process. We are
particularly interested in the situation where $\X$ is the limit of $\FMI$-valued processes $\X^n$.

Unsurprisingly, if the set of $\PP$-measure smaller or equal to $\eps$ in \thmref{rnd-modulus} 
is independent of $t$, the result is true for all $t$ simultaneously, almost surely. The modulus of
continuity may also depend on $t$ in a continuous way; or be arbitrary if the limiting process has continuous
paths:

\begin{theorem}\label{t:pr-modulus}
	Let\/ $J\subseteq\R_+$ be an interval, and\/ $\X^n=(\X^n_t)_{t\in J}$, $n\in\N$, a sequence of\/
	$\MMI$-valued \cadlag\ processes converging in distribution to an\/ $\MMI$-valued \cadlag\ process\/
	$\X=(\X_t)_{t\in J}$.  Assume that for every\/ $t\in J$, $\eps>0$, there exists\/ $h_{t,\eps}\in \CH$ such that
	\begin{equation}\label{eq:pasgenforall}
		\deltalimsup\, \nlimsup\, \bPs{\X^n_t \in \Mdheps[h_{t,\eps}]\;\, \forall t\in J} \ge 1-\eps.
	\end{equation}
	Then\/ $\X$ is an\/ $\FMI$-valued \cadlag\ process, that is \eqref{eq:fmi-val} is satisfied,
	if at least one of the following two conditions holds:
	\begin{enumerate}
		\item\label{it:cond1} $\X$ has continuous paths a.s.
		\item\label{it:cond2} $t\mapsto h_{t,\eps}(\delta)$ is continuous for every $\eps, \delta >0$.
	\end{enumerate}

	If additionally\/ $\X^n$ is\/ $\FMI$-valued almost surely for all\/ $n\in\N$, \eqref{eq:pasgenforall} can be
	replaced by existence of random measurable sets\/ $Y_{t,\eps,\delta}^n \subseteq X^n_t$, in addition to
	the\/ $h_{t,\eps}\in\CH$, satisfying the following two conditions for every\/ $\eps>0$:
	\begin{gather}
		\deltalimsup\, \nlimsup\,
		\bPs{\nu^n_t(X^n_t\setminus Y_{t,\eps,\delta}^n) \le h_{t,\eps}(\delta)\;\, \forall t\in J}
		\ge 1-\eps, \label{eq:pasforall} \\
		\forall n\in\N,\, t\in J,\, \delta>0,\, x,y\in Y_{t,\eps,\delta}^n: r_n(x,y) < \delta \implies
			d\(\kappa_n(x),\kappa_n(y)\) \le h_{t,\eps}(\delta). \label{eq:modulust}
	\end{gather}
\end{theorem}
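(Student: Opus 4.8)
The plan is to route everything through the quantitative bound \propref{betaestim}\itref{estimate-beta-3} instead of through \thmref{rnd-modulus} directly: inequality \eqref{eq:ballest} estimates $\beta$ at an arbitrary space by means of a \emph{nearby} space lying in some $\Mh[\delta,\eps]$, and this is exactly what lets me reach the limiting process at each time without ever building a modified modulus. First I reduce the target \eqref{eq:fmi-val}: it suffices to show, for each fixed $\eps>0$ and each compact subinterval $[a,b]\subseteq J$, that $\Ps{\X_t,\X_{t-}\in\FMI \text{ for all } t\in[a,b]}\ge 1-\eps$; letting $\eps\downarrow0$ and exhausting $J$ by such compacta (with the convention $\X_{\ell-}=\X_\ell$ at a left endpoint) then yields the claim. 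The supplementary criterion \eqref{eq:pasforall}--\eqref{eq:modulust} reduces to \eqref{eq:pasgenforall} exactly as in the proof of \corref{modulus}: restricting the measure of $\X^n_t$ to $Y^n_{t,\eps,\delta}\times I$, condition \eqref{eq:modulust} places the restricted space in $\DEMIfull{\delta}{h_{t,\eps}(\delta)}$ while \eqref{eq:pasforall} controls the discarded mass, so that $\X^n_t\in\Mdheps[h_{t,\eps}]$ on the pertinent event. Finally, as $\MMI$ is Polish, so is the Skorohod space $\D_{\MMI}(J)$ of \cadlag\ paths with the $J_1$-topology, and by Skorohod's representation theorem I may assume the processes coupled so that $\X^n\to\X$ almost surely in $\D_{\MMI}(J)$.

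Fix $\eps>0$ and $[a,b]$, and put $A^n_\delta:=\{\,\X^n_t\in\Mh[\delta,\,h_{t,\eps}(\delta)]\ \text{for all}\ t\in[a,b]\,\}$ (read via inner measure as in the remark after \thmref{rnd-modulus} if necessary; note $\Mh[\delta,\eps]$ is closed by \lemref{Mhclosed}). Since the event ``for all $t\in J$'' is contained in $A^n_\delta$, hypothesis \eqref{eq:pasgenforall} gives $\deltalimsup\nlimsup\P(A^n_\delta)\ge 1-\eps$, so I fix $\delta_j\downarrow0$ with $\liminf_{j\to\infty}\nlimsup\P(A^n_{\delta_j})\ge 1-\eps$. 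With $G_j:=\{A^n_{\delta_j}\text{ for infinitely many }n\}$, reverse Fatou gives $\P(G_j)\ge\nlimsup\P(A^n_{\delta_j})$ and $\P(\limsup_{j}G_j)\ge\limsup_{j}\P(G_j)\ge 1-\eps$. Let $\Omega_0$ be the intersection of $\limsup_{j}G_j$ with the almost-sure Skorohod event, so $\P(\Omega_0)\ge 1-\eps$; on $\Omega_0$ there is an infinite set of indices $j$, and for each such $j$ a subsequence $(n_k)$, along which $\X^{n_k}_s\in\Mh[\delta_j,\,h_{s,\eps}(\delta_j)]$ \emph{simultaneously for all} $s\in[a,b]$. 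The total masses converge along the coupling and thus remain bounded, so \eqref{eq:ballest} --- applied with its ``$2\delta$'' taken to be $\delta_j$ and its mass-slack taken to be $h_{s,\eps}(\delta_j)$ --- shows that any $\smallxh\in\MMI$ with $\dmGP(\X^{n_k}_s,\smallxh)<\delta_j/2$ satisfies $\beta(\smallxh)\le\bigl(h_{s,\eps}(\delta_j)+\delta_j\bigr)\bigl(2+\|\X^{n_k}_s\|+\delta_j\bigr)$.

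It remains to exhibit, for each $t\in[a,b]$, admissible approximants of $\X_t$ and of $\X_{t-}$, and here the dichotomy \itref{cond1}/\itref{cond2} enters. Under \itref{cond1} the limit has continuous paths, hence $\X_{t-}=\X_t$, every $t$ is a continuity point, and $\X^{n_k}_t\tomGw\X_t$ at the fixed time $t$; taking $\smallxh=\X_t$ in the bound above (so that the modulus is read off at $t$ itself) and letting $k\to\infty$ gives $\beta(\X_t)\le\bigl(h_{t,\eps}(\delta_j)+\delta_j\bigr)\bigl(2+\|\X_t\|+\delta_j\bigr)$, and then $j\to\infty$ along the infinite index set forces $\beta(\X_t)=0$, since $h_{t,\eps}\in\CH$. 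Under \itref{cond2} the limit may jump, so the $\X^{n_k}$ must be evaluated at \emph{shifted} times: the standard description of $J_1$-convergence supplies $u_k^{\pm}\to t$ with $\X^{n_k}_{u_k^+}\tomGw\X_t$ and $\X^{n_k}_{u_k^-}\tomGw\X_{t-}$. As membership in $A^{n_k}_{\delta_j}$ holds at every time, $\X^{n_k}_{u_k^{\pm}}\in\Mh[\delta_j,\,h_{u_k^{\pm},\eps}(\delta_j)]$, and continuity of $s\mapsto h_{s,\eps}(\delta_j)$ gives $h_{u_k^{\pm},\eps}(\delta_j)\to h_{t,\eps}(\delta_j)$; inserting $\smallxh=\X_t$ (resp.\ $\X_{t-}$) and letting $k\to\infty$ and then $j\to\infty$ yields $\beta(\X_t)=\beta(\X_{t-})=0$ as before.

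Consequently, on $\Omega_0$ we obtain $\X_t,\X_{t-}\in\FMI$ for all $t\in[a,b]$, which completes the reduction. I expect the main obstacle to be the treatment of jump times in case \itref{cond2}: one must simultaneously approximate the right value $\X_t$ and the left value $\X_{t-}$ by the $\X^n$ evaluated at two distinct shifted times $u_k^{\pm}\to t$ while keeping the modulus in \eqref{eq:ballest} under control, and it is exactly the continuity of $t\mapsto h_{t,\eps}(\delta)$ that brings the modulus at the shifted times back to $h_{t,\eps}(\delta)$. Confirming that a single good event $\Omega_0$, built from one $t$-independent sequence $\delta_j$, serves for all $t\in[a,b]$ at once is the delicate point that makes the ``for all $t$'' conclusion work.
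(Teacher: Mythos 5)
Your argument is correct, but it takes a genuinely different route from the paper. The paper's proof reduces \thmref{pr-modulus} to \thmref{rnd-modulus} (and thence to the deterministic \thmref{modulus}, i.e.\ to closedness of $\Mde$ and the decomposition $\FMI=\bigcup_h\Mh$): in case \itref{cond1} it uses that Skorohod convergence to a continuous limit gives $\X^n_t\tomGw\X_t$ for all $t$, and in case \itref{cond2} it evaluates along time changes $w^n$ and inflates the modulus to $\hh_{t,\eps}(\delta)=h_{t,\eps}(\delta)+\delta$ so that $\hh_{t,\eps}(\delta)\ge h_{w^n(t),\eps}(\delta)$ eventually; the heart of that reduction is the recursive construction of $\delta_m$ and of a single modified modulus $\hh_\eps$ in Step~2 of \thmref{rnd-modulus}, needed to turn the $\deltalimsup\nlimsup$ hypothesis into one almost-sure event over \emph{all} $\delta_m$ simultaneously. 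You bypass all of this by going through $\beta$ and the perturbation bound \propref{betaestim}\itref{estimate-beta-3}: since the hypothesis \eqref{eq:pasgenforall} already quantifies over all $t$ with a single $\eps$, a reverse-Fatou argument over one fixed sequence $\delta_j\downarrow0$ produces an event of probability $\ge1-\eps$ on which the membership holds for \emph{infinitely many} $j$, and that already forces $\beta(\X_t)=\beta(\X_{t-})=0$ via \eqref{eq:ballest} — no intersection over all $j$, hence no $\eps2^{-m}$ shrinking and no modified modulus. Your treatment of case \itref{cond2} is also slightly different but equivalent: you use continuity of $t\mapsto h_{t,\eps}(\delta)$ only to pass to the limit $h_{u_k^\pm,\eps}(\delta_j)\to h_{t,\eps}(\delta_j)$ rather than to dominate a shifted modulus. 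What the paper's route buys is reuse of its general machinery and the extra conclusion $\X_t\in\Mh[\hh]$; what yours buys is a shorter, more self-contained argument for this particular theorem. Two cosmetic points: when you write ``membership in $A^{n_k}_{\delta_j}$ holds at every time'' and then evaluate at $u_k^\pm$, note that $u_k^\pm$ may fall just outside $[a,b]$, so you should invoke the full ``for all $t\in J$'' event from \eqref{eq:pasgenforall} (which you in fact have available) or enlarge the compact interval slightly; and the reverse-Fatou step should be run on measurable hulls $C^n_{\delta_j}\subseteq A^n_{\delta_j}$ realizing the inner measure, exactly as in the paper's remark following \thmref{rnd-modulus}. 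Neither affects the validity of the proof.
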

\begin{proof}
Due to the Skorohod representation theorem, we may assume that $\X^n\to \X$ almost surely in the Skorohod
topology. For condition \itref{cond1} respectively \itref{cond2} we obtain
\begin{enumerate}
\item If $\X$ has continuous paths a.s., the convergence in Skorohod topology implies uniform convergence of
	$\X_t^n(\omega)$ on $J$ a.s. with respect to $\dmGP$.
	Hence we have $\X^n_t \tonmGw \X_t$ for all $t\in J$, almost surely, and we can proceed as in the proof
	of \thmref{rnd-modulus}.
\item There are (random) continuous $w^n \colon J \to J$, converging to the identity uniformly on
	compacta, such that $\X^n_{w^n(t)} \to \X_t$ for all $t\in J$, almost surely. We can use the moduli of
	continuity $\hh_{t,\eps}(\delta) := h_{t,\eps}(\delta) + \delta$ and proceed as in the proof of
	\thmref{rnd-modulus}. Note here that, due to continuity of $h_{t,\eps}(\delta)$ in $t$, there is for
	every compact subinterval $\mathcal{J}$ of $J$ an $N_{\mathcal{J},\eps,\delta}\in\N$ such that
	$\hh_{t,\eps}(\delta) \ge h_{w^n(t),\eps}(\delta)$ for all $n\ge N_{\mathcal{J},\eps,\delta}$ and
	$t\in \mathcal{J}$.

	The same arguments apply for left limits with $w^n_{-}$ such that $\X^n_{w^n_{-}(t)} \to \X_{t-}$.
\qedhere\end{enumerate}
\end{proof}

To use \thmref{pr-modulus}, we have to check in \eqref{eq:pasgenforall} or \eqref{eq:pasforall} a condition for
uncountably many $t$ simultaneously, which is often much more difficult than for every $t$ individually.
One situation, where it is easy to pass from individual $t$ to all $t$ simultaneously is the case where the
moduli of continuity $h_{t,\eps}$ actually do not depend on $t$ and $\eps$ (see \corref{epsindep}).
The independence of $\eps$, however, is a strong requirement.
Therefore, we relax it to not blowing up too fast as $\eps\downarrow 0$, where the ``too fast'' is determined by
the following modulus of c\`adl\`agness of the limiting process.

\begin{definition}[modulus of c\`adl\`agness]
Let\/ $J$ be an interval, $(E,r)$ a metric space, and\/ $e=(e_t)_{t\in J}\in\DE$ a \cadlag\ path on $J$ with values in $E$.
Following \textup{\cite[(14.44)]{Bil68}}, set
\begin{equation}
  w''(e,\delta) := \sup_{t,t_1,t_2 \in J: t_1 \leq t \leq t_2, t_2-t_1 \leq \delta}
  	\min\bigl\{ r(e(t),e(t_1)),\, r(e(t_2),e(t)) \bigr\}.
\end{equation}
We say that\/ $e$ \emph{admits $w\in\CH$ as modulus of c\`adl\`agness} if\/ $w''(e,\delta) \le w(\delta)$ for all\/
$\delta>0$.
\end{definition}

\begin{theorem}\label{t:modcadlag}
	Fix an interval\/ $J\subseteq \R_+$. Let\/ $\X=(\X_t)_{t\in J}$ and\/ $\X^n=(\X^n_t)_{t\in J}$,
	$n\in\N$, be\/ $\MMI$-valued c\`adl\`ag processes such that\/ $\X^n$ converges in distribution to\/ $\X$.
	Furthermore, assume that there is a dense set\/ $Q\subseteq J$ and\/ $w_\eps, h_\eps\in \CH$,
	such that for all\/ $\eps>0$
	\begin{gather}
		\nlimsup \Ps{\X_t^n \in \Mdheps} \ge 1-\eps \qquad \forall \delta>0,\,t\in Q, \label{eq:1}\\
		\Ps{t\mapsto \X_t \text{ admits\/ $w_\eps$ as modulus of c\`adl\`agness w.r.t.\ $\dmGP$}} \ge 1-\eps,\,
		\text{ and} \label{eq:2}\\
		\liminf_{\delta\downarrow 0} h_{\eps\cdot\delta}\(2w_\eps(\delta)\) = 0. \label{eq:3}
	\end{gather}
	Then\/ $\X$ is an\/ $\FMI$-valued c\`adl\`ag process, that is \eqref{eq:fmi-val} holds.
\end{theorem}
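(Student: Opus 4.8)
The plan is to apply the Skorohod representation theorem, assume $\X^n\to\X$ almost surely in the Skorohod $J_1$ topology, and then show that on an event of probability arbitrarily close to $1$ we have $\beta(\X_t)=\beta(\X_{t-})=0$ \emph{simultaneously} for all $t\in J$; by the identity $\beta^{-1}(0)=\FMI$ from \propref{usc} this is exactly the assertion \eqref{eq:fmi-val}. The entire difficulty lies in the word ``simultaneously'': hypothesis \eqref{eq:1} controls only fixed times in the countable set $Q$, while \eqref{eq:fmi-val} concerns the uncountably many values $\X_t$ and left limits $\X_{t-}$. The two modulus-of-c\`adl\`agness hypotheses \eqref{eq:2} and \eqref{eq:3} are precisely the tools that bridge this gap.

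I would fix $\eps>0$, localize to a compact subinterval $J\cap[0,T]$ (taking a countable union over $T$ at the end), and choose a countable dense $Q$ consisting of fixed times at which $\X$ is almost surely continuous; this is possible because the set of $t$ with $\P(\X\text{ is discontinuous at }t)>0$ is countable. For $s\in Q$, $J_1$-convergence gives $\X^n_s\to\X_s$ a.s., so applying \eqref{eq:1} with the probability parameter $\eps\delta$ in place of $\eps$, together with the closedness of the sets $\Mde$ (\lemref{Mhclosed}) and reverse Fatou, yields for every $\delta>0$
\[
  \P\bigl(\X_s\in\Mh[2w_\eps(\delta),\,h_{\eps\delta}(2w_\eps(\delta))]\bigr)\ \ge\ 1-\eps\delta .
\]
For each fixed scale $\delta$ I then pick a finite net $Q_\delta\subseteq Q$ of spacing at most $\delta$ covering $J\cap[0,T]$; it has at most $T/\delta+1$ points, so the event $G_\delta$ that \emph{all} of them lie in the above set has $\P(G_\delta^c)\le(T/\delta+1)\,\eps\delta\le(T+1)\eps$, a bound \emph{independent of $\delta$}. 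This scale-dependent budget $\eps\delta$ is the crucial device.

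On $G_\delta$ intersected with the event $\{w''(\X,\cdot)\le w_\eps\}$ from \eqref{eq:2} and an event $\{\sup_{t\in J\cap[0,T]}\|\mu_t\|\le M\}$ (which holds for large $M$ since the total mass is a real-valued c\`adl\`ag, hence locally bounded, function of $t$), every $\X_t$ and every $\X_{t-}$ lies within $\dmGP$-distance $w_\eps(\delta)$ of a controlled net point. Indeed, for adjacent net points $t_1\le t\le t_2$ with $t_2-t_1\le\delta$, the definition of $w''$ forces $\min\{\dmGP(\X_t,\X_{t_1}),\dmGP(\X_{t_2},\X_t)\}\le w_\eps(\delta)$, and since \emph{both} neighbours are controlled on $G_\delta$, $\X_t$ is within $w_\eps(\delta)$ of a controlled point whichever term attains the minimum; letting $t'\uparrow t$ in the same inequality handles $\X_{t-}$. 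Now \propref{betaestim}\itref{estimate-beta-3}, applied with its ``$2\delta$'' equal to $2w_\eps(\delta)$ and distance $<w_\eps(\delta)$ (enlarge the distance parameter by a vanishing slack to make it strict), gives the uniform bound
\[
  \beta(\X_t),\ \beta(\X_{t-})\ \le\ \bigl(h_{\eps\delta}(2w_\eps(\delta))+2w_\eps(\delta)\bigr)\bigl(2+M+w_\eps(\delta)\bigr)\ =:\ b(\delta)
\]
for all $t\in J\cap[0,T]$.

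Finally I would convert this family of uniform bounds into exact vanishing. By \eqref{eq:3} there is a sequence $\delta_m\downarrow0$ with $h_{\eps\delta_m}(2w_\eps(\delta_m))\to0$, hence $b(\delta_m)\to0$. Writing $E_m$ for the good event at scale $\delta_m$, we have $\P(E_m)\ge1-(T+1)\eps$ for every $m$, and on $E_m$ the bound $\beta(\X_t),\beta(\X_{t-})\le b(\delta_m)$ holds for all $t$. Since the unions $\bigcup_{k\ge m}E_k$ decrease in $m$, $\P(\limsup_m E_m)=\lim_m\P\bigl(\bigcup_{k\ge m}E_k\bigr)\ge1-(T+1)\eps$, and on $\limsup_m E_m$ the bound $b(\delta_k)\to0$ holds along infinitely many $k$, forcing $\beta(\X_t)=\beta(\X_{t-})=0$ for all $t$. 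Thus $\X_t,\X_{t-}\in\FMI$ for all $t\in J\cap[0,T]$ with probability at least $1-(T+1)\eps$; letting $\eps\downarrow0$, then $M\uparrow\infty$ and $T\uparrow\infty$, yields \eqref{eq:fmi-val}. Measurability of $G_\delta$ and $E_m$ is handled via inner measures exactly as in the remark following \thmref{rnd-modulus}. The main obstacle is exactly the simultaneity over all $t$ and their left limits at vanishing scales; it is overcome by the min-structure of $w''$ (reducing ``all $t$'' to finitely many net points per scale), the scale-dependent budget $\eps\delta$ (keeping the total cost $O(T\eps)$ uniformly in $\delta$), and the decreasing-union argument (turning $b(\delta_m)\to0$ into an exact zero without recourse to Borel--Cantelli).
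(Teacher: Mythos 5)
Your proposal is correct and follows essentially the same route as the paper's proof: Skorohod representation, a finite $\delta$-net in $Q$ with the scale-dependent probability budget $\eps\delta$ (the paper uses dyadic nets $Q_k$ with budget $\eps 2^{-k}$), the min-structure of $w''$ to transfer control from the net points to all $t$ and to the left limits, and \propref{betaestim}\itref{estimate-beta-3} to bound $\beta$ near controlled points. The only difference is packaging: the paper phrases the conclusion as avoidance of the closed sets $F_m$ via \lemref{pathcontain}, whereas you bound $\beta(\X_t)$ and $\beta(\X_{t-})$ directly and let the bound vanish along a sequence of scales via a limsup-of-events argument --- equivalent in view of $\beta^{-1}(0)=\FMI$.
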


Recall the decomposition $\MMI\setminus \FMI = \bigcup_{m\in\N} F_m$ with $F_m$ defined in \eqref{Bdelta}.
The basic idea of the proof is to use the following lemma about \cadlag\ paths to show that, almost surely, the
path of\/ $\X$ avoids $F_m$. The assertion of the lemma follows easily using the triangle-inequality.

\begin{lemma}\label{lem:pathcontain}
	Let\/ $J$ be an interval, $(E,r)$ a metric space, and\/ $e=(e_t)_{t\in J}\in\DE$ a \cadlag\ path
	admitting \modcadlag\ $w\in\CH$. Let\/ $F\subseteq E$ be any set, $\delta>0$, and\/ $Q\subseteq J$ such
	that for all $t\in J$ there is $t_1,t_2\in Q$ with $t_1\le t \le t_2 \le t_1+\delta$. Then 
	\begin{equation}
		r(e_t, F) > w(\delta)\;\;\forall t \in Q \quad\implies\quad e_t \not\in F \text{ and\/ } e_{t-} \not\in F \;\;\forall t\in J.
	\end{equation}
\end{lemma}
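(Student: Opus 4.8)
The whole argument rests on a single consequence of the bound $w''(e,\delta)\le w(\delta)$: whenever $t_1\le t\le t_2$ are times in $J$ with $t_2-t_1\le\delta$, the definition of the \modcadlag\ forces $\min\{r(e_t,e_{t_1}),\,r(e_t,e_{t_2})\}\le w(\delta)$, so the value $e_t$ lies within distance $w(\delta)$ of at least one of the two anchors $e_{t_1},e_{t_2}$. The bracketing hypothesis on $Q$ supplies, for every $t$, such anchors \emph{inside $Q$}, and the premise of the implication says each $Q$-anchor satisfies $r(e_{t_i},F)>w(\delta)$. The triangle inequality then separates $e_t$ from $F$. Throughout I would use that $y\mapsto r(y,F)$ is $1$-Lipschitz, hence continuous.

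First I would settle the values $e_t$. Fix $t\in J$ and pick $t_1,t_2\in Q$ with $t_1\le t\le t_2\le t_1+\delta$. Say $r(e_t,e_{t_1})\le w(\delta)$ (the alternative, with $t_2$, is identical). Then
\[
  r(e_t,F)\ \ge\ r(e_{t_1},F)-r(e_{t_1},e_t)\ >\ w(\delta)-w(\delta)\ =\ 0 ,
\]
where the strict inequality uses $t_1\in Q$. Hence $e_t\notin F$, which is the first half of the conclusion and holds for \emph{every} $t\in J$.

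For the left limits I would reduce, via continuity of $r(\cdot,F)$, to bounding $r(e_{t-},F)=\lim_{s\uparrow t}r(e_s,F)$ away from $0$ (if $t$ is the left endpoint of $J$, the convention $e_{t-}:=e_t$ sends this back to the previous step). Fix $t$ that is not the left endpoint. Here the point to watch is that the pointwise bound $r(e_s,F)>0$ for $s<t$ is \emph{not} uniform, so I must produce a \emph{fixed} anchor. I would obtain one by taking a bracket that straddles $t$ on its left: either the bracket $t_1\le t\le t_2$ already has $t_1<t$, or, applying the hypothesis to times $s<t$, I use the right endpoints $b(s)\in Q$ of their brackets. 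In the first case, for every $s\in[t_1,t)$ the anchor bound applied to $t_1\le s\le t_2$ keeps $e_s$ within $w(\delta)$ of the fixed value $e_{t_1}$ or $e_{t_2}$; passing to a subsequence $s_k\uparrow t$ along which the same anchor is chosen and using continuity gives $r(e_{t-},e_{t_1})\le w(\delta)$ (or the analogue for $t_2$), whence $r(e_{t-},F)\ge r(e_{t_1},F)-w(\delta)>0$. In the remaining case every such bracket has $b(s)<t$, so $s\le b(s)<t$ forces $b(s)\uparrow t$ and $e_{b(s)}\to e_{t-}$; since each $b(s)\in Q$, the values $e_{b(s)}$ sit at distance exceeding $w(\delta)$ from $F$ and converge to $e_{t-}$, which again yields $e_{t-}\notin F$.

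The only genuine obstacle is the one just isolated: transferring the separation from sampled times to the left limit. Mere pointwise positivity $r(e_s,F)>0$ may degrade to $0$ in the limit, so the argument must pin down either a single anchor at distance strictly exceeding $w(\delta)$ from $F$, or a sequence of $Q$-anchors converging to $e_{t-}$; the straddling-bracket construction is exactly what makes the bound uniform. (The degenerate subcase $w(\delta)=0$ is cleaned up by the same anchor reasoning, since the converging anchors are $Q$-points and are therefore strictly separated from $F$.) Everything else is bookkeeping with the triangle inequality and the left-endpoint convention.
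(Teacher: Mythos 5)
The paper gives no proof of this lemma beyond the remark that it ``follows easily using the triangle-inequality,'' and your argument is exactly that triangle-inequality argument, carried out correctly for the values $e_t$ and for the left limits in all but one subcase. The one place where your write-up does not actually close is the parenthetical dismissal of the degenerate subcase $w(\delta)=0$ in the ``remaining case'': there you only know $e_{b(s)}\to e_{t-}$ with each $e_{b(s)}\notin F$, and individual separation of the anchors from $F$ does \emph{not} survive passage to the limit (a sequence of points outside $F$ can converge into $F$ when $F$ is not closed, and the lemma allows arbitrary $F$). The clean fix is to observe that in that subcase you are given $t\in Q$ and $Q$-points $b(s)<t$ with $t-b(s)\le\delta$, so $\bigl(b(s),t\bigr)$ is itself an admissible bracket of $t$ with left endpoint strictly below $t$; this reduces the remaining case to your first case and yields the uniform bound $r(e_s,F)\ge\min\{r(e_{b(s)},F),\,r(e_t,F)\}-w(\delta)>0$ for all $s$ in $(b(s),t)$, hence $r(e_{t-},F)>0$ with no case distinction on whether $w(\delta)$ vanishes. (If one reads ``increasing'' in the definition of $\CH$ as strict, $w(\delta)>0$ and the issue never arises, but the fix above makes the proof independent of that reading.)
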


\begin{proof}[Proof of \thmref{modcadlag}]
	Because $\Mheps=\bigcap_{\delta>0}\Mdheps$ is closed by \lemref{Mhclosed}, the Portmanteau theorem and \eqref{eq:1} imply 
	\begin{equation}\label{eq:lim1}
		\Ps{\X_t \not\in \Mheps} < \eps \qquad \forall t\in Q,\,\eps>0.
	\end{equation}
	Due to the Skorohod representation theorem, we may assume that $\X^n\to \X$ almost surely in Skorohod
	topology. In order to simplify notation, we assume $J=[0,1]$ and $Q=\bigcup_{k\in\N} Q_k$
	with $Q_k=\set{i2^{-k}}{i=0,\ldots,2^k}$. It is enough to show for every $\eps>0,\, m\in\N$ and $F_m$ as defined in \eqref{Bdelta} that
	\begin{equation}\label{eq:goal}
		p_m := \bPs{\exists t\in[0,1]: \X_t \mbox{ or } \X_{t-} \in F_m} \le 3\eps.
	\end{equation}

	To show \eqref{eq:goal}, fix $\eps>0$ and $m\in\N$, and let $\X_t=(X_t,r_t,\mu_t)$.
	Because $\X$ has \cadlag\ paths, we find $K=K(\eps)<\infty$ such that
	\begin{equation}\label{eq:K}
		\bPs{\sup_{t\in[0,1]} \|\mu_t\| \ge K-3} < \eps.
	\end{equation}
	According to \eqref{eq:3} and \eqref{eq:lim1}, we can choose $k\in \N$ big enough such that for $h:=h_{\eps2^{-k}}$
	we have
	\begin{equation}\label{eq:hestim}
		h\(2w_\eps(2^{-k})\) < (Km)^{-1} - 2 w_\eps(2^{-k}) \qquad\text{and}\qquad 
		\Ps{\X_t \not\in \Mh} < \eps 2^{-k}.
	\end{equation}
	Assume without loss of generality that $w_\eps(2^{-k}) \leq 1$.
	Now \propref{betaestim}\itref{estimate-beta-3} implies that, whenever $\X_t\in\Mh$ and $\|\mu_t\| < K-3$, we have
	\begin{equation}\label{eq:dmGPestim}
		\dmGP(\X_t, F_m) > w_\eps(2^{-k}).
	\end{equation}
	Combining \eqref{eq:2} and \lemref{pathcontain}, we obtain
	\begin{align}
		p_m &\le \eps + \bPs{\exists t\in Q_k: \dmGP(\X_t, F_m) \leq w_\eps(2^{-k})}. \\
	\intertext{Using \eqref{eq:K}, \eqref{eq:dmGPestim}, and (in the last step) \eqref{eq:hestim}, we conclude}
		p_m &\le 2\eps + 2^k\sup_{t\in Q_k} \bPs{\|\mu_t\|<K-3,\, \X_t\not\in \Mh} \le 3\eps.
	\end{align}
	Thus \eqref{eq:goal} holds for all $\eps>0$, and $\Ps{\exists t\in [0,1]:X_t\not\in \FMI}  = \sup_{m\in\N} p_m = 0$ follows.
\end{proof}

If, in \thmref{modcadlag}, we can choose the modulus of continuity $h_\eps=h\in\CH$, independent of
$\eps$, such that \eqref{eq:1} holds, we do not need to check \eqref{eq:2} and \eqref{eq:3}.

\begin{corollary}[$\eps$-independent modulus of continuity]\label{c:epsindep}
	Assume that\/ $\X^n=(\X^n_t)_{t\in J}$ converges in distribution to an $\MMI$-valued c\`adl\`ag
	process\/ $\X$, and\/ $Q\subseteq J$ is dense.
	Then\/ $\X$ is an\/ $\FMI$-valued c\`adl\`ag process if, for some\/ $h\in \CH$,
	\begin{equation} \label{eq:var1}
		\nlimsup \Ps{\X_t^n \in \Mh} = 1 \qquad \forall t\in Q.
	\end{equation}
\end{corollary}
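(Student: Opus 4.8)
The plan is to deduce the corollary directly from \thmref{modcadlag} by making the $\eps$-independent choice $h_\eps := h$ for all $\eps>0$ and then verifying the theorem's three hypotheses \eqref{eq:1}, \eqref{eq:2} and \eqref{eq:3}. First I would observe that \eqref{eq:1} is essentially a restatement of \eqref{eq:var1}: since $\Mh=\bigcap_{\delta>0}\Mdh\subseteq\Mdh=\Mdheps$ (using $h_\eps=h$), the event inclusion $\{\X_t^n\in\Mh\}\subseteq\{\X_t^n\in\Mdheps\}$ together with monotonicity of $\P$ gives, for every $\delta>0$ and $t\in Q$,
\[ \nlimsup\Ps{\X_t^n\in\Mdheps}\ge\nlimsup\Ps{\X_t^n\in\Mh}=1\ge1-\eps, \]
which is precisely \eqref{eq:1}.

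Condition \eqref{eq:3} should then be immediate from the $\eps$-independence of $h$: with $h_\eps=h$ we have $h_{\eps\cdot\delta}=h$ for every $\delta$, so \eqref{eq:3} reduces to $\liminf_{\delta\downarrow0}h\(2w_\eps(\delta)\)=0$. This holds because $w_\eps\in\CH$ forces $w_\eps(\delta)\to0$ as $\delta\downarrow0$, while $h\in\CH$ is continuous with $h(0)=0$; hence the full limit, and a fortiori the liminf, is $0$. Thus the delicate coupling between the modulus of continuity and the \modcadlag\ that makes \eqref{eq:3} nontrivial in general disappears once $h$ does not depend on $\eps$.

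The only genuine work is to produce, for each $\eps>0$, a \emph{deterministic} modulus $w_\eps\in\CH$ satisfying \eqref{eq:2}, and this is where I expect the main obstacle. The input is that $\X$ has \cadlag\ paths almost surely, so by the standard characterization of \cadlag\ paths the random quantity $W(\delta):=w''(\X_\cdot,\delta)$ is non-decreasing in $\delta$ and tends to $0$ almost surely as $\delta\downarrow0$ (measurability being handled by restricting the defining supremum to a countable dense set of times and invoking right-continuity; for unbounded $J$ one reduces to compact subintervals exactly as in the proof of \thmref{modcadlag}). From this almost-sure convergence I would pass to convergence in probability, choose $\delta_k\downarrow0$ with $\P(W(\delta_k)>1/k)<\eps2^{-k}$, and note that on the event $\bigcap_k\{W(\delta_k)\le1/k\}$—of probability at least $1-\eps$—monotonicity of $W$ yields $W(\delta)\le1/k$ for $\delta\in(\delta_{k+1},\delta_k]$. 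Taking a continuous increasing majorant of the resulting step function produces $w_\eps\in\CH$, and on this event the path $t\mapsto\X_t$ admits $w_\eps$ as \modcadlag; this is \eqref{eq:2}. The two delicate points are ensuring $w_\eps$ genuinely lands in $\CH$ (continuous and increasing) and that a single deterministic $w_\eps$ dominates the path-dependent $w''(\X_\cdot,\cdot)$ off a set of probability at most $\eps$.

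With \eqref{eq:1}, \eqref{eq:2} and \eqref{eq:3} verified, \thmref{modcadlag} applies verbatim and yields that $\X$ is an $\FMI$-valued \cadlag\ process, i.e.\ \eqref{eq:fmi-val} holds, which completes the argument.
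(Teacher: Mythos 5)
Your proposal is correct and follows essentially the same route as the paper: set $h_\eps:=h$, note that \eqref{eq:var1} gives \eqref{eq:1} via $\Mh\subseteq\Mdheps$, observe that \eqref{eq:3} becomes automatic once $h$ is $\eps$-independent, and invoke \thmref{modcadlag}. The only difference is that for \eqref{eq:2} you construct the deterministic \modscadlag\ $w_\eps$ by hand (correctly), whereas the paper simply cites the standard facts \cite[(14.6),(14.8) and (14.46)]{Bil68} for their existence.
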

\begin{proof}
	Let $h\in\CH$ be such that \eqref{eq:var1} is satisfied and set $h_\eps:=h$.
	Then \eqref{eq:3} is satisfied for every choice of $w_\eps \in \CH$, $\eps>0$.
	For every c\`adl\`ag process, in particular for $\X$, there exist \modscadlag\ $w_\eps$ such that \eqref{eq:2} holds (cf.\ \cite[(14.6),(14.8) and (14.46)]{Bil68}).
	Thus, \thmref{modcadlag} yields the claim.
\end{proof}

\section{Examples}\label{sec:examples}

The (neutral) tree-valued Fleming-Viot dynamics is constructed in \cite{GPW13} using the formalism of metric
measure spaces. In \cite{DGP12}, (allelic) types -- encoded as marks of marked metric measure spaces -- are
included, in order to be able to model mutation and selection. 

In \cite[Remark~3.11]{DGP12} and \cite[Theorem~6]{DGP13} it is stated that the resulting tree-valued Fleming-Viot
dynamics with mutation and selection (TFVMS) admits a mark function at all times, almost surely.  The given
proof, however, contains a gap, because it relies on the criterion claimed in \cite[Lemma~7.1]{DGP13}, which is
wrong in general (see \exref{counter}).
The reason why the criterion may fail is a lack of homogeneity of the measure $\nu$, in the sense that
there are parts with high and parts with low mass density. Consequently, if we condition two samples to have
distance less than $\eps$, the probability that they are from the high-density part tends to one as
$\eps\downarrow 0$, and we do not ``see'' the low-density part. This phenomenon occurs if $\nu$ has an
atom but is not purely atomic.
We also give two non-atomic examples, one a subset of Euclidean space, and the other one ultrametric.

\begin{example}[counterexamples]\label{ex:counter}
In both examples, it is straight-forward to see that $(X,r,\mu)$, with $\mu=\nu\otimes K$,
satisfies the assumptions of \cite[Lemma~7.1]{DGP13}, but does not admit a mark function.
The mark space is $I=\{0,1\}$.
\begin{enumerate}
\item Let $\lambda_A$ be Lebesgue measure of appropriate dimension on a set $A$.
	Define $X:=[0,1]^2 \cup [2,3]$, where $[2,3]$ is identified with $[2,3]\times\{0\} \subseteq \R^2$,
	\begin{equation}
		\nu:=\tfrac12({\lambda_{[0,1]^2} + \lambda_{[2,3]}}) \quad\text{and\/}\quad
		K_x:=\begin{cases} \frac12(\delta_0+\delta_1), & x\in [0,1]^2,\\ \delta_0, & x\in [2,3].\end{cases}
	\end{equation}
\item In this example think of a tree consisting of a left part with tertiary branching points and a right part with binary branching points. The leaves correspond to $X:=A\cup B$ with $A=\{0,1,2\}^\N$ and $B=\{3,4\}^\N$, and we choose as a metric
	\begin{equation}
		r\(\folge x, \folge y\) := \max_{n\in\N} e^{-n}\cdot\1_{x_n \ne y_n}.
	\end{equation}
	Note that $(X,r)$ is a compact, ultrametric space. The measure $\nu$ is constructed as follows: choose
	the left respectively right part of the tree with probability $\tfrac12$ each. Going deeper in the tree,
	at each branching point a branch is chosen uniformly. That is, let $\nu_A$ and $\nu_B$ be the Bernoulli
	measures on $A$ and $B$ with uniform marginals on $\{0,1,2\}$ and $\{3,4\}$, respectively. Define
	\begin{equation}
		\nu:=\tfrac12(\nu_A + \nu_B) \quad\text{and\/}\quad
		K_x:=\begin{cases} \frac12(\delta_0+\delta_1), & x\in A,\\ \delta_0, & x\in B.\end{cases}
	\end{equation}
\end{enumerate}
\end{example}

%
\subsection[Tree-valued Fleming-Viot with mutation and selection]{The tree-valued Fleming-Viot dynamics with mutation and selection}\label{sub:FV}
%

In the following, we prove the existence of a mark function for the TFVMS by verifying the assumptions of \thmref{pr-modulus}
for a sequence of approximating tree-valued Moran models.
Due to the Girsanov transform given in \cite[Theorem~2]{DGP12}, it is enough to consider the neutral case, that is
without selection.

We briefly recall the construction of the tree-valued Moran model with mutation (TMMM) with finite population
$U_N=\{1,\ldots,N\}$, $N \in \N$, and types from the mark space $I$. For details and more formal definitions, see
\cite[Subsections~2.1--2.3]{DGP12}. 
In the underlying Moran model with mutation (MMM), every pair of individuals ``resamples'' independently at rate
$\gamma>0$. Here, resampling means that one of the individuals (chosen uniformly at random among the two) is replaced by
an offspring of the other one, and the offspring gets the same type as the parent. Furthermore, every individual
mutates independently at rate $\vartheta\ge 0$, which means that it changes its type according to a fixed
stochastic kernel $\beta(\cdot,\cdot)$ on $I$. Denote the resulting type of individual $x\in U_N$ at time
$t\ge0$ by $\kappa^N_t(x)$.
To obtain the tree-valued dynamics, define the distance $r_t^N(x,y)$ between two individuals $x,y\in U_N$ at
time $t\ge 0$ as twice the time to the most recent common ancestor (MRCA) (cf.\ \cite[(2.7)]{DGP12}), provided
that a common ancestor exists, and as $2t+r_0^N(x,y)$ otherwise. The TMMM is the resulting process
$\X_t^N=(U_N,r_t^N,\nu_N,\kappa_t^N)$, with sampling measure $\nu_N=\tfrac{1}{N} \sum_{k=1}^N \delta_{k}$. It is
easy to check that, by definition, $(U_N, r_t^N)$ is an ultrametric space, provided that the initial metric
space $(U_N, r_0^N)$ is ultrametric. This explains the name \emph{tree-valued} (cf.\ \cite[Remark~2.7]{DGP12}).

Next recall the graphical construction of the MMM from \cite[Definition~2.2]{DGP12}. A resampling event is
modeled by means of a family of independent Poisson point processes $\{ \eta_\mathrm{res}^{k,\ell}: k, \ell \in
U_N\}$ on $\R_+$, where each $\eta_\mathrm{res}^{k,\ell}$ has rate $\gamma/2$. If $t \in
\eta_\mathrm{res}^{k,\ell}$, draw an arrow from $(k,t)$ to $(\ell,t)$ to represent a resampling event at time
$t$, where $\ell$ is an offspring of $k$. Similarly, model mutation times by a family of independent Poisson point
processes $\{ \eta_\mathrm{mut}^k: k \in U_N\}$, where each $\eta_\mathrm{mut}^k$ has rate $\vartheta$.
If $t \in \eta_\mathrm{res}^{k,\ell}$, draw a dot at $(k,t)$ to represent a mutation event changing the type of
individual $k$ (see Figure~\ref{pic:ex_4_1-1}).

Let $(M_t^{t_0,N})_{t \geq t_0}$, $M_t^{t_0,N} \subseteq U_N$ with $M_{t_0}^{t_0,N}=\emptyset$ be the process
that records the individuals of the population at time $t$ with an ancestor at a time $t_0 < s \leq t$ involved
in a mutation event. By a coupling argument, this process can be constructed by means of the Poisson point
processes $(\eta_\mathrm{res}^{k,\ell}, \eta_\mathrm{mut}^k, k,\ell \in U_N)$ as follows (compare
Figures~\ref{pic:ex_4_1-1}--\ref{pic:ex_4_1-2}):
\begin{equation}\label{eq:MtN}
  M_t^{t_0,N} =
  \begin{cases}
   M_{t-}^{t_0,N} \cup \{\ell\} & \mbox{ if there is a resampling arrow from $k \in M_{t-}^{t_0,N}$ to $\ell \in U_N$ at time $t$}, \cr
    M_{t-}^{t_0,N} \cup \{k\} & \mbox{ if there is a mutation event at $k \in U_N$ at time $t$}, \cr
    M_{t-}^{t_0,N} \backslash \{\ell\} & \mbox{ if there is a resampling arrow from $k \notin M_{t-}^{t_0,N}$ to $\ell \in U_N$ at time $t$}. \cr
  \end{cases}
\end{equation}
\picturefig{0.7}{ex_4_1-1}{
Graphical construction of the MMM for $N=10$ for the time-period $[t_0,t]$, and the resulting process
$(M_s^{t_0,N})_{s \in [t_0,t]}$. Resampling arrows are drawn at points of $\eta_\mathrm{res}^{k,\ell}$, and
mutation dots at points of $\eta_\mathrm{mut}^k$.}{ex_4_1-1}
\picturefig{0.7}{ex_4_1-2}{Tracing the ancestor backwards in time in Figure~\ref{pic:ex_4_1-1}: This dual
construction is also known as the coalescent backwards in time. Reverse the arrows to see for instance that $3$
at time $t_0$ is an ancestor of $8$ at time $t$.
The elements of $M_t^{t_0,N} \subseteq U_N$ are highlighted by boxes in the right part of the picture.}{ex_4_1-2}
Let $\xi_t^N := \frac{1}{N} \#M_{t_0+t}^{t_0,N}$ be the proportion of individuals at time $t_0+t, t \geq 0$ whose ancestors have mutated
after (the for the moment fixed) time $t_0$. 

\begin{lemma}\label{lem:mutbound}
Let\/ $C:=\frac12\vartheta(2\vartheta+\gamma)$. Then for all\/ $a,\delta>0$
\begin{equation} \lbeq{bd-on-Y-exp}
  \limsup_{N \rightarrow \infty} \P\bigl( \sup_{t \in [0,\delta]} \xi_t^N \geq a \bigr)
  \leq C a^{-2} \delta^2.
\end{equation}
\end{lemma}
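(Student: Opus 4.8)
The plan is to work with the integer-valued pure-jump process $m_t := \# M_{t_0+t}^{t_0,N}$, so that $\xi_t^N = m_t/N$ and, since $M_{t_0}^{t_0,N}=\emptyset$, we start from $m_0=0$. First I would read off the transition rates of $m_t$ from \eqref{eq:MtN}; by exchangeability of the Poisson construction these depend only on $m=\#M_{t-}$, so $m_t$ is a birth--death chain. A mutation raises $m$ by one exactly when it hits an individual $k\notin M_{t-}$, which happens at total rate $(N-m)\vartheta$; a resampling arrow from some $k\in M_{t-}$ to some $\ell\notin M_{t-}$ raises $m$ by one, while an arrow from $k\notin M_{t-}$ to $\ell\in M_{t-}$ lowers it by one, and each of these occurs for $m(N-m)$ ordered pairs at rate $\gamma/2$. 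Hence
\begin{equation}
	\lambda_+(m) = (N-m)\vartheta + \tfrac{\gamma}{2}\,m(N-m), \qquad
	\lambda_-(m) = \tfrac{\gamma}{2}\,m(N-m).
\end{equation}
The decisive structural point is that the resampling terms cancel in the drift, $\lambda_+(m)-\lambda_-(m) = (N-m)\vartheta \ge 0$, which reflects that resampling is neutral and merely redistributes marks. In particular $(m_t)$ is a bounded nonnegative submartingale, and therefore so is $(m_t^2)$.

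Next I would extract moment bounds from the generator $\CL$ of $m_t$. The first-moment equation $\tfrac{\d}{\dt}\EE[m_t] = \vartheta\,(N-\EE[m_t])$ with $\EE[m_0]=0$ gives $\EE[m_t]\le \vartheta N t$. For the second moment, $\CL(m^2) = 2m(\lambda_+-\lambda_-) + (\lambda_++\lambda_-) = 2\vartheta\,m(N-m) + \vartheta(N-m) + \gamma\,m(N-m)$, so, bounding $N-m\le N$ and $m(N-m)\le Nm$,
\begin{equation}
	\tfrac{\d}{\dt}\EE[m_t^2] \le (2\vartheta+\gamma)N\,\EE[m_t] + \vartheta N \le (2\vartheta^2+\gamma\vartheta)N^2 t + \vartheta N.
\end{equation}
Integrating over $[0,\delta]$ and recalling $C=\tfrac12\vartheta(2\vartheta+\gamma)=\vartheta^2+\tfrac12\gamma\vartheta$ yields the clean estimate $\EE[m_\delta^2]\le C N^2\delta^2 + \vartheta N\delta$.

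To pass from the fixed time $\delta$ to the supremum I would use an $L^2$ maximal inequality with the sharp constant, which is where estimating a second moment pays off. Let $\tau := \inf\set{t\ge0}{m_t \ge aN}$; since $m$ jumps by $\pm1$ we have $m_\tau \ge aN$ on the event $\{\tau\le\delta\} = \{\sup_{t\le\delta}m_t\ge aN\}$. Optional stopping for the submartingale $(m_t^2)$ at the bounded time $\tau\wedge\delta$ gives
\begin{equation}
	(aN)^2\,\P(\tau\le\delta) \le \EE\bigl[m_{\tau\wedge\delta}^2\bigr] \le \EE[m_\delta^2] \le C N^2\delta^2 + \vartheta N\delta.
\end{equation}
Dividing by $(aN)^2$ and letting $N\to\infty$, the term $\vartheta\delta/(a^2 N)$ vanishes and leaves $\limsup_{N\to\infty}\P\bigl(\sup_{t\in[0,\delta]}\xi_t^N \ge a\bigr) \le C a^{-2}\delta^2$, as claimed.

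I expect the main obstacle to be the first step: correctly enumerating the jump rates of $\#M_t^{t_0,N}$ from the graphical construction \eqref{eq:MtN}, in particular verifying that the two resampling mechanisms contribute symmetrically so that the drift reduces to the pure mutation term $(N-m)\vartheta$. Once this cancellation is in hand the moment computation is routine, and the only other point needing care is to invoke the $L^2$ maximal inequality in its constant-one form via optional stopping of $m_t^2$, rather than the generic Doob $L^2$ inequality whose extra factor would spoil the exact constant $C$.
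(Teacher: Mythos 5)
Your proof is correct, and it takes a genuinely different route from the paper's. The paper first shows that $(\xi_t^N)_{t\ge0}$ converges weakly in Skorohod topology to the solution $(Z_t)_{t\ge0}$ of the SDE $\d Z_t=\vartheta(1-Z_t)\,\dt+\sqrt{\gamma Z_t(1-Z_t)}\,\d B_t$ with $Z_0=0$ (tightness via generator convergence, existence and uniqueness as for the Bessel SDE), then applies Doob's submartingale inequality to $Z_t^2$ and bounds $\EE[Z_\delta^2]\le C\delta^2$ by It\^o's formula, using $\EE[Z_s]\le\vartheta s$. You instead stay at finite $N$: you read off the birth--death rates of $m_t=N\xi_t^N$ from \eqref{eq:MtN} (these match the transition rates stated in the paper, with $x=m/N$), observe the cancellation of the resampling terms in the drift, compute $\EE[m_\delta^2]\le CN^2\delta^2+\vartheta N\delta$ from the generator, and apply the constant-one $L^2$ maximal inequality via optional stopping of the bounded submartingale $m_t^2$ before letting $N\to\infty$. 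The two computations are structurally parallel (same submartingale-plus-second-moment skeleton, same constant $C$), but your version is more elementary: it dispenses entirely with the weak-convergence step, the SDE well-posedness, and the implicit Portmanteau argument needed to transfer $\P(\sup_{t\le\delta}\cdot\ge a)$ from $\xi^N$ to $Z$, at the modest cost of carrying the harmless $O(1/N)$ correction term that vanishes in the limit. The only points requiring care in your write-up --- that $m_t^2$ is indeed a submartingale (Jensen for the nonnegative submartingale $m_t$, or directly $\CL(m^2)\ge0$ from your rate formulas) and that $\{\sup_{t\le\delta}m_t\ge aN\}=\{\tau\le\delta\}$ for the integer-valued pure-jump path --- are both fine.
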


\begin{proof}
By definition, $\bigl( \xi_t^N \bigr)_{t \geq 0}$ is a (continuous time) Markov jump process on $[0,1]$ with
$\xi_0^N=0$ and transitions
\begin{equation}
  \begin{cases}
    x \mapsto x-1/N & \mbox{ at rate } \frac{\gamma}{2} N^2 x (1-x), \cr
    x \mapsto x+1/N & \mbox{ at rate } \frac{\gamma}{2} N^2 x (1-x) + \vartheta N (1-x). \cr
  \end{cases}
\end{equation}
This process converges weakly with respect to the Skorohod topology to the solution $(Z_t)_{t \geq 0}$ of the
stochastic differential equation (SDE)
\begin{equation}
\lbeq{SDE-mark}
  \d Z_t = \vartheta (1-Z_t) \dt + \sqrt{\gamma Z_t (1-Z_t)}\, \d B_t, \quad Z_{0}=0.
\end{equation}
Indeed, to establish tightness use \cite[Theorem~III.9.4]{EK}. Note that, as $[0,1]$ is compact, it
suffices to show the convergence of the generators applied to a set of appropriate test-functions. For existence
and uniqueness of solutions to \eqref{SDE-mark} reason as for the Bessel SDE in \cite[(48.1) and below]{RW2}.
Moreover, $Z_t \in [0,1]$ is a bounded non-negative right-continuous submartingale. Hence, with Doob's
submartingale inequality (see for instance \cite[Proposition~II.2.16(a)]{EK}), we obtain 
\begin{equation}
  \P\bigl( \sup_{t \in [0,\delta]} Z_t \geq a \bigr) 
  = \P\bigl( \sup_{t \in [0,\delta]} Z_t^2 \geq a^2 \bigr)
  \leq a^{-2} \EE[ Z_{\delta}^2 ].
\end{equation}
As $Z_t \in [0,1]$, we further deduce using It\^o's formula that for all $t \geq 0$,
\begin{align}
  & \EE[ Z_t ] \leq \vartheta t \quad\mbox{ and } \\
  & \EE[ Z_t^2 ] = \EE\bigl[ \int_{0}^t 2 Z_s \vartheta (1-Z_s) + \gamma Z_s (1-Z_s) \,\d s \bigr]
  \leq C t^2.
\end{align}
Then
\begin{equation}
  \limsup_{N \rightarrow \infty} \P\bigl( \sup_{t \in [0,\delta]} \xi_t^N \geq a \bigr)
  \leq \P\bigl( \sup_{t \in [0,\delta]} Z_t \geq a \bigr) 
  \leq  C a^{-2} \delta^2
\end{equation}
follows.
\end{proof}

As the construction of the TFVMS in \cite{DGP12} is only given for a compact type-space $I$, we make the same
assumption. Note, however, that our proof itself does not use compactness and is therefore valid for non-compact
$I$, provided that the TFVMS is the limit of the corresponding Moran models, and there exists a Girsanov
transform allowing us to reduce to the neutral case.

\begin{theorem}[the TFVMS admits a mark-function] \label{t:mark-FV}
Let\/ $I$ be compact and\/ $\X=(\X_t)_{t \geq 0}$ be the tree-valued Fleming-Viot dynamics with mutation and
selection as defined in \textup{\cite{DGP12}}. Then
\begin{equation}
  \P( \X_t \in \FMI \text{ for all\/ } t>0 ) = 1.
\end{equation}
In particular, $(\X_t)_{t>0}$ is an $\FMI$-valued c\`adl\`ag process.
\end{theorem}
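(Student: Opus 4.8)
The plan is to realize the neutral TFVMS as the limit in distribution of the tree-valued Moran models $\X^N=(\X^N_t)_{t\ge0}$ and to verify the hypotheses of \thmref{pr-modulus} in its $\FMI$-valued form. First I reduce to the neutral case: by the Girsanov transform \cite[Theorem~2]{DGP12}, selection changes the law only by an equivalent change of measure, and the event $\{\X_t\in\FMI\text{ for all }t>0\}$ has the same probability under equivalent measures; so it suffices to treat the neutral process, which is the marked-Gromov-weak limit in distribution of the $\X^N$, each of which lies in $\FMI$ by construction. It also suffices to prove \eqref{eq:fmi-val} on each compact $J=[s_0,T]\subseteq(0,\infty)$, since $(0,\infty)$ is a countable union of such intervals.

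For the modulus of continuity I take $h\in\CH$ \emph{independent of $t$ and $\eps$}, for instance $h(\delta)=\delta^{1/4}$, so that $\delta\,h(\delta)^{-2}=\sqrt\delta\to0$. As exceptional sets I use $Y^N_{t,\delta}:=U_N\setminus M_t^{t-\delta/2,N}$, the individuals whose ancestral lineage carries no mutation during $(t-\delta/2,t]$. Because $r^N_t$ equals twice the time back to the MRCA, two individuals $x,y\in Y^N_{t,\delta}$ with $r^N_t(x,y)<\delta$ have their MRCA strictly after $t-\delta/2$ and, carrying no mutation since then, inherit the same type; thus $d(\kappa^N_t(x),\kappa^N_t(y))=0\le h(\delta)$, which is \eqref{eq:modulust}. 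Since $t\mapsto h(\delta)$ is constant, condition \itref{cond2} of \thmref{pr-modulus} holds trivially, and I never need to determine whether $\X$ has continuous paths.

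The heart of the argument is the uniform-in-$t$ control \eqref{eq:pasforall} of $\nu_N(X^N_t\setminus Y^N_{t,\delta})=\tfrac1N\#M_t^{t-\delta/2,N}$. I exploit the monotonicity $M_t^{s_1,N}\subseteq M_t^{s_2,N}$ for $s_2\le s_1$, a larger look-back window can only enlarge the set of mutated individuals. Partitioning $J$ by the grid $t_j:=s_0+j\delta/2$, for $t\in[t_{j-1},t_j]$ one has $t_{j-2}\le t-\delta/2$, hence $M_t^{t-\delta/2,N}\subseteq M_t^{t_{j-2},N}$ and $\tfrac1N\#M_t^{t-\delta/2,N}\le\sup_{u\in[0,\delta]}\xi^N_u$ for the process $\xi^N$ of \lemref{mutbound} run from reference time $t_{j-2}$. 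By time-homogeneity each of these $O(T/\delta)$ suprema is governed by \lemref{mutbound}, and, since each bounding event $\{\sup_{u}\xi^N_u\ge h(\delta)\}$ is measurable, a union bound over the grid gives $\nlimsup\P\bigl(\sup_{t\in J}\tfrac1N\#M_t^{t-\delta/2,N}\ge h(\delta)\bigr)\le c_J\,h(\delta)^{-2}\delta$, which tends to $0$ as $\delta\downarrow0$ by the choice of $h$. Passing to complements and taking $\deltalimsup$ yields \eqref{eq:pasforall} with value $1\ge1-\eps$, for every $\eps>0$.

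Thus all hypotheses of \thmref{pr-modulus} hold on each $[s_0,T]$, giving \eqref{eq:fmi-val} there; the union over $s_0\downarrow0$, $T\uparrow\infty$ proves $\P(\X_t\in\FMI\text{ for all }t>0)=1$, and in particular that $(\X_t)_{t>0}$ is an $\FMI$-valued \cadlag\ process. The one genuinely delicate point is the simultaneous-in-$t$ estimate: passing from a fixed-time bound to a uniform one, which the monotonicity of $s\mapsto M^{s,N}_t$ reduces to finitely many applications of \lemref{mutbound} on overlapping windows of length $\delta$, the choice $h(\delta)\gg\sqrt\delta$ ensuring that the resulting union bound vanishes.
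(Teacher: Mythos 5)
Your proof is correct and follows the same core strategy as the paper's: reduce to the neutral case by the Girsanov transform, approximate by tree-valued Moran models, take as exceptional set the individuals whose ancestral lineage carries a mutation in a look-back window of length $\delta/2$ (so that \eqref{eq:modulust} holds for \emph{any} modulus, since two close individuals then inherit the same type from their MRCA), and control the mass of that set uniformly in $t$ by combining \lemref{mutbound} with a union bound over a grid of mesh $\delta/2$. The differences are organizational rather than substantive. The paper anchors the exceptional sets at the grid points $(i-1)\delta/2$, so that they are constant in $t$ on each grid interval and one application of \lemref{mutbound} per interval suffices; you use the sliding window $(t-\delta/2,t]$ and recover the same reduction via the monotonicity of $t_0\mapsto M_t^{t_0,N}$, which is a correct (and slightly more transparent) way to get the simultaneous-in-$t$ bound. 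More notably, the paper's choice $h_{t,\eps}(\delta)\ge\sqrt{\eps^{-1}2TC\delta}+\1_{[2t,\infty[}(\delta)$ depends on $\eps$ and is discontinuous in $t$, so it must invoke condition \itref{cond1} of \thmref{pr-modulus} and hence the a.s.\ path continuity of the TFVMS from \cite{DGP12}; by exhausting $(0,\infty)$ by compacts $[s_0,T]$ you can afford the $t$- and $\eps$-independent modulus $h(\delta)=\delta^{1/4}$ (any $h$ with $\delta\,h(\delta)^{-2}\to0$ works), appeal to condition \itref{cond2} trivially, and bypass the path-continuity input altogether. Both routes are valid; yours is marginally more self-contained, while the paper's handles all $t>0$ in a single application of \thmref{pr-modulus}. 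The only points you should make explicit are the restriction to $\delta<2s_0$ (so the look-back windows stay in $[0,\infty)$ and $r^N_t(x,y)<\delta$ forces a common ancestor to exist) — both are automatic under $\deltalimsup$.
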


\begin{proof}
By \cite[Theorem~2]{DGP12}, there exists a Girsanov transform that enables us to assume without loss of
generality that selection is not present. In this case, according to \cite[Theorem~3]{DGP12}, $\X$ is the limit
in distribution of TMMMs $\X^N=(\X^N_t)_{t\ge0}$, as discussed above. Let $\X^N_t=(U_N, r^N_t, \nu_N, \kappa_t^N)$ with
$U_N=\{1,\ldots,N\}$ and $\nu_N$ the uniform distribution on $U_N$. Let $\delta>0$ be fixed for the moment, and
recall that the distance $r_t^N(x,y)$ between two individuals $x,y\in U_N$ at time $t\ge\delta/2$ is twice the
time to the MRCA. Hence, if $r_t^N(x,y) < \delta$, then $x$ and $y$ at time $t$ have a common ancestor at time
$t-\delta/2$.
Further recall that $(M_t^{t_0,N})_{t \geq t_0}$, with $M_t^{t_0,N} \subseteq U_N$ and
$M_{t_0}^{t_0,N}=\emptyset$, records the individuals of the population at time $t$ with an ancestor at a time
$s\in (t_0, t]$ involved in a mutation event (cf.\ \eqref{eq:MtN}).

Fix an arbitrary time horizon $T>0$ and $i \in \N$, $i \leq 2T/\delta$.
Using the notation of \thmref{pr-modulus}, for $t \in [i \delta / 2,(i+1) \delta / 2)$, let
$Y_{t,\eps,\delta}^N := U_N \backslash M_t^{(i-1)\delta / 2,N}$, independent of $\eps>0$. Set
$Y_{t,\eps,\delta}^N := \emptyset$ for $t<\delta/2$.
We claim that \eqref{eq:modulust} is satisfied for any choice of $h_{t,\eps}\in\CH$.
Indeed, if $x,y \in Y_{t,\eps,\delta}^N$ satisfy $r_t^N(x,y) < \delta$, then they have a common ancestor at time
$t_0:= (i-1)\delta / 2 \le t-\delta/2$, and after this point in time no mutation occurred along their ancestral
lineages. In particular, $d(\kappa_t^N(x),\kappa_t^N(y))=0$, and \eqref{eq:modulust} is obvious.
Moreover, $\X^N$ is $\FMI$-valued by construction, and $\X$ has continuous paths by \cite[Theorem~1]{DGP12}.
According to \thmref{pr-modulus}, it is therefore enough to find moduli of continuity $h_{t,\eps} \in \CH$ such
that \eqref{eq:pasforall} holds for every $\eps>0$.

By \lemref{mutbound}, we obtain a constant $C>0$ such that for every $a>0$,
\begin{equation}
  \limsup_{N \rightarrow \infty} \P\bigl( \sup_{t \in [i \delta / 2,(i+1) \delta / 2)} \nu_N\bigl( U_N
  \setminus Y_{t,\eps,\delta}^N \bigr) \geq a \bigr)
  \leq Ca^{-2}\delta^2.
\end{equation}
After summation over $i \in \{1,\ldots, \floor{2T/\delta}\}$, we obtain
\begin{equation}
\lbeq{mark-bound}
  \limsup_{N \rightarrow \infty} \P\bigl( \sup_{t \in [\delta/2,T]}
  	\nu_N\bigl( U_N \backslash Y_{t,\eps,\delta}^N \bigr) \geq a \bigr)
  \leq 2TC\delta a^{-2}.
\end{equation}
For $\epsilon>0$ arbitrary, we use this inequality with $a := \sqrt{\eps^{-1}2TC\delta}$, together with
$\|\nu_N\|\le 1$ for $t<\delta/2$, to see that \eqref{eq:pasforall} is satisfied for
$h_{t,\eps}\in \CH$ with
\begin{equation}
	h_{t,\eps}(\delta)\ge\sqrt{\eps^{-1}2TC\delta} + \1_{[2t, \infty[}(\delta). \qedhere
\end{equation}
\end{proof}

%
\subsection[Tree-valued $\Lambda$-Fleming-Viot]{The tree-valued $\Lambda$-Fleming-Viot process} \label{sub:TFV}
%

Let $\Lambda$ be a finite measure on $[0,1]$, and recall the $\Lambda$-coalescent, introduced in \cite{Pitman99}.
It is a coalescent process, where each $k$-tuple out of $N$ blocks merges independently at rate
\begin{equation}
	\lambda_{N,k} := \int_0^1 y^{k-2} (1-y)^{N-k}\, \Lambda(\dy).
\end{equation}
For fixed $N$, it is elementary to construct a finite, random (ultra-)metric measure space encoding the random
genealogy of the $\Lambda$-coalescent, where the distance is defined as the time to the MRCA (recall the construction of Figures~\ref{pic:ex_4_1-1}--\ref{pic:ex_4_1-2} and see Figure~\ref{pic:ex_4_1-3}).
\picturefig{0.5}{ex_4_1-3}{Tracing the ancestor backwards in time: The $\Lambda$-coalescent allows for one parent to have more than one child.}{ex_4_1-3}
In \cite[Theorem~4]{GPW09}, existence and uniqueness of a Gromov-weak limit in distribution, as $N\to\infty$, is
proven to be equivalent to the so-called ``dust-free"-property, namely $\int_0^1 y^{-1}\, \Lambda(\dy)= \infty$.
The resulting limit is called $\Lambda$-coalescent measure tree.

Now, replace the tree-valued Moran models considered in \subref{FV} and \cite{DGP12} by so-called tree-valued
$\Lambda$-Cannings models with $\Lambda$ satisfying the dust-free-property.
That is, leave the mutation- and selection-part as it is and change the resampling-part of the Moran models as follows:
For $k=2,\ldots,N$, at rate $\binom{N}{k} \lambda_{N,k}$ a block of $k$ individuals is chosen uniformly at
random among the $N$ individuals of the population. Upon such a resampling event, all individuals in this block
are replaced by an offspring of a single individual which is chosen uniformly from this block. Note that the genealogy
(disregarding types) of the resulting $\Lambda$\nobreakdash-Cannings model with $N$ individuals is dual to the
$\Lambda$-coalescent starting with $N$ blocks. We call any limit point (in path space) of the tree-valued
$\Lambda$-Cannings processes, as $N$ tends to infinity and $\Lambda$ is fixed, \emph{tree-valued
$\Lambda$-Fleming-Viot process} (TLFV). In the neutral case, existence and uniqueness of such a limit point
follows as a special case of the forthcoming work \cite{GrevenKlimovskyWinter}.
Here, we show that, whenever limit points exist, all of them admit mark functions.

\begin{theorem}[the TLFV admits a mark-function] \label{t:mark-Lambda}
Suppose there is no selection, that is $\alpha=0$, and\/ $\X=(\X_t)_{t \geq 0}$ is a tree-valued
$\Lambda$-Fleming-Viot process with mutation. Then
\begin{equation}
  \P( \X_t \in \FMI \text{ for all\/ } t>0 ) = 1.
\end{equation}
\end{theorem}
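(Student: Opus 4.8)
The plan is to follow the proof of \thmref{mark-FV} almost verbatim, replacing the approximating tree-valued Moran models by the tree-valued $\Lambda$-Cannings models and substituting a $\Lambda$-analogue for \lemref{mutbound}. By assumption $\X$ is a limit point of these models, so along a subsequence the tree-valued $\Lambda$-Cannings processes $\X^N=(U_N,r^N_t,\nu_N,\kappa^N_t)$ converge in distribution to $\X$, and they are $\FMI$-valued by construction. As in \subref{FV}, I define $M_t^{t_0,N}\subseteq U_N$ to be the set of individuals at time $t$ with an ancestor that mutated in $(t_0,t]$ (the recursion \eqref{eq:MtN} has a natural $\Lambda$-Cannings counterpart: a block-resampling event propagates the membership status of the chosen parent to the whole offspring block), set $\xi^N_t:=\tfrac1N\#M_{t_0+t}^{t_0,N}$, and choose $Y^N_{t,\eps,\delta}:=U_N\setminus M_t^{(i-1)\delta/2,N}$ for $t\in[i\delta/2,(i+1)\delta/2)$.

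The genealogical verification of \eqref{eq:modulust} then carries over unchanged: the distance is still twice the time to the MRCA, so $r^N_t(x,y)<\delta$ forces a common ancestor at a time $\ge t-\delta/2\ge(i-1)\delta/2=t_0$; if in addition $x,y\in Y^N_{t,\eps,\delta}$, no mutation occurred along their lineages after $t_0$, whence $d(\kappa^N_t(x),\kappa^N_t(y))=0$. Thus \eqref{eq:modulust} holds for any $h_{t,\eps}\in\CH$.

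The main work lies in the $\Lambda$-analogue of \lemref{mutbound}, namely $\limsup_N\P\bigl(\sup_{t\in[0,\delta]}\xi^N_t\ge a\bigr)\le Ca^{-2}\delta^2$ with $C$ depending only on $\vartheta$ and $\Lambda([0,1])$, and I expect this second-moment estimate to be the crux. Two structural facts drive it. First, $\xi^N_t$ is a non-negative submartingale: mutation of an unmarked individual contributes the positive drift $\vartheta(1-\xi^N_t)$, while a short exchangeability computation shows that each block-resampling event leaves $\EE[\#M]$ unchanged (for a block of size $k$ with $j$ marked members the parent is marked with probability $j/k$, and the expected change $\frac{j}{k}(k-j)-\frac{k-j}{k}j$ vanishes). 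Doob's $L^2$-inequality then gives $\P(\sup_{t\in[0,\delta]}\xi^N_t\ge a)\le a^{-2}\EE[(\xi^N_\delta)^2]$. Second, I bound $\EE[(\xi^N_\delta)^2]=\tfrac1N\P(1\in M)+\tfrac{N-1}{N}\P(1,2\in M)$ genealogically: a single backward lineage accumulates mutations at rate $\vartheta$, so $\P(1\in M)\le\vartheta\delta$; for a pair I trace both ancestral lineages back, using that two $\Lambda$-coalescent lineages merge at the pairwise rate $\int_0^1\Lambda(\dy)=\Lambda([0,1])$, so their coalescence time $\tau$ satisfies $\P(\tau<s)\le\Lambda([0,1])\,s$. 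Splitting according to whether the marking mutation sits on the shared part after coalescence or on the two separate parts before it yields $\P(1,2\in M)\le\vartheta\int_0^\delta\P(\tau<s)\,\d s+(\vartheta\delta)^2\le C\delta^2$ with $C=\tfrac12\vartheta(\Lambda([0,1])+2\vartheta)$, exactly the Moran bound with $\gamma$ replaced by $\Lambda([0,1])$.

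Finally I feed these estimates into \thmref{pr-modulus} as in \subref{FV}: summing the mutation bound over the $\lfloor 2T/\delta\rfloor$ blocks and choosing $a=\sqrt{\eps^{-1}2TC\delta}$ verifies \eqref{eq:pasforall}. The one genuine difference from the TFVMS is that the TLFV need not have continuous paths, since large $\Lambda$-reproduction events can produce macroscopic jumps in the genealogy; hence condition \itref{cond1} of \thmref{pr-modulus} is unavailable. I avoid this by applying \thmref{pr-modulus} on each interval $J=[1/m,m]$ separately: for $\delta<2/m$ one has $t\ge\delta/2$ throughout $J$, so $Y^N_{t,\eps,\delta}$ is defined for all $t\in J$ and the $t$-independent modulus $h_{t,\eps}(\delta):=\sqrt{\eps^{-1}2mC\delta}$ lies in $\CH$ and is trivially continuous in $t$, so condition \itref{cond2} applies. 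This gives $\X_t,\X_{t-}\in\FMI$ for all $t\in[1/m,m]$ almost surely, and intersecting over $m\in\N$ yields $\P(\X_t\in\FMI\text{ for all }t>0)=1$.
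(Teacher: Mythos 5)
Your proposal is correct and reaches the same two milestones as the paper's proof --- the genealogical verification of \eqref{eq:modulust} via the sets $Y^N_{t,\eps,\delta}=U_N\setminus M_t^{(i-1)\delta/2,N}$, and a Doob-type bound $\limsup_N\P(\sup_{t\le\delta}\xi^N_t\ge a)\le Ca^{-2}\delta^2$ fed into \thmref{pr-modulus} --- but it obtains the key estimate by a genuinely different route. The paper works forwards: it writes down the generator $\Omega^N$ of $\xi^N$, Taylor-expands, and uses the Vandermonde identity $\sum_m\binom{Nx-1}{m}\binom{N(1-x)-1}{k-2-m}=\binom{N-2}{k-2}$ together with $\sum_k\binom{N-2}{k-2}\lambda_{N,k}=\Lambda([0,1])$ to get $\EE[(\xi^N_t)^2]\le Ct^2+O(N^{-1})t$ from $f(x)=x$ and $f(x)=x^2$. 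You instead argue backwards in time, computing $\EE[(\xi^N_\delta)^2]=\tfrac1N\P(1\in M)+\tfrac{N-1}{N}\P(1,2\in M)$ via the pairwise coalescence rate $\lambda_{2,2}=\Lambda([0,1])$ (which equals $\sum_k\lambda_{N,k}\binom{N-2}{k-2}$ for every $N$, by the same binomial identity) and the independence of mutations on disjoint lineage segments; your exchangeability computation for the submartingale property is the dual of the paper's $f(x)=x$ step. Both computations are correct and give the same constant $C=\tfrac12\vartheta(\Lambda([0,1])+2\vartheta)$; yours is arguably cleaner (no $O(N^{-1})$ remainder, no passage through an SDE limit). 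A further point in your favour: the paper disposes of the final step with ``reason as for the TFVMS,'' but the TFVMS argument invoked condition \itref{cond1} of \thmref{pr-modulus} (continuity of paths of the limit), which is not available for the TLFV since large $\Lambda$-reproduction events produce genuine jumps; moreover the modulus $h_{t,\eps}(\delta)=\sqrt{\eps^{-1}2TC\delta}+\1_{[2t,\infty)}(\delta)$ used there is not continuous in $t$, so condition \itref{cond2} does not apply verbatim either. Your device of working on $J=[1/m,m]$ with $\delta<2/m$, so that the indicator term disappears and a $t$-independent modulus suffices for condition \itref{cond2}, is exactly the right repair and makes the argument complete where the paper is terse.
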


\begin{proof}
By passing to a subsequence if necessary, we may assume that the $\Lambda$-Cannings models converge in
distribution to $\X$.
We proceed as in Subsection~\ref{sub:FV}. Again, let $(M_t^{t_0,N})_{t \geq t_0}$, $M_t^{t_0,N} \subseteq U_N$
with $M_{t_0}^{t_0,N}=\emptyset$ be the process that records the individuals of the population at time $t$ with
an ancestor at a time $t_0 < s \leq t$ involved in a mutation event and  $\xi_t^N := \frac{1}{N}
\#M_{t_0+t}^{t_0,N}$ be the proportion of individuals at time $t_0+t, t \geq 0$ whose ancestors have mutated
after (the for the moment fixed) time $t_0$. By definition, $\bigl( \xi_t^N \bigr)_{t \geq 0}$ is a (continuous
time) Markov jump process on $[0,1]$ with $\xi_0^N=0$ and generator
\begin{align}
  \big(\Omega^N f\big)(x)
  &= \vartheta N (1-x) \big( f(x+1/N)-f(x) \big) \\
  &\phantom{{}={}} + \sum_{k=2}^N \lambda_{N,k}
  	\sum_{m=0}^{(N x) \wedge k} \binom{N x }{m} \binom{N (1-x) }{k-m} \nn\\  
  & \phantom{{}={}+{}} \times \Bigl( \frac{m}{k} \big( f(x+(k-m)/N)-f(x) \big) + \frac{k-m}{k}
  	\big(f(x-m/N)-f(x) \big) \Bigr), \nn
\end{align}
where $x \in [0,1], N \cdot x \in \NN \cup \{0\}, f \in \mathcal{C}_b^2([0,1])$. Due to Taylor's formula, there
is $x_{m,k,N}^+ \in [x,x+(k-m)/N]$, $x_{m,k,N}^- \in [x-m/N,x]$ with
\begin{align}\lbeq{gen-lambda}
  \big(\Omega^N f\big)(x)
  &= \vartheta N (1-x) \big( f(x+1/N)-f(x) \big) \\
  &\phantom{{}={}} + \sum_{k=2}^N \lambda_{N,k}
   \sum_{m=0}^{(N x) \wedge k} \binom{N x }{m} \binom{N (1-x) }{k-m} 
   \Big( \frac{f''(x_{m,k,N}^+)}{2} \frac{m (k-m)^2}{k N^2} +  \frac{f''(x_{m,k,N}^-)}{2} \frac{(k-m)
   m^2}{k N^2} \Big) \nn\\
  &= \vartheta (1-x) f'(x) + O(N^{-1}) + x (1-x) \sum_{k=2}^N \lambda_{N,k} \Delta_{N,k}(x), \nn
\end{align}
where, using $\binom{n }{i} = \frac ni \binom{n-1 }{i-1}$ for $i \geq 1$, 
\begin{equation}
   \Delta_{N,k}(x) = \sum_{m=1}^{(N x) \wedge (k-1)} \binom{Nx-1 }{m-1} \binom{N(1-x)-1 }{k-m-1} 
   \Big( f''(x_{m,k,N}^+) \frac{k-m}{2k} + f''(x_{m,k,N}^-) \frac{m}{2k} \Big).
\end{equation}
Recall that $\sum_{m=0}^k \binom{\ell}m \binom{N-\ell}{k-m} = \binom{N}{k}$ and
$\lambda_{N,k} = \int_0^1 y^{k-2} (1-y)^{N-k}\, \Lambda(\dy)$ with a finite measure $\Lambda$ on $[0,1]$ to see that 
\begin{align}
  \sum_{k=2}^N \left| \Delta_{N,k}(x) \right|
   &\le \|f''\|_\infty \sum_{k=2}^N \lambda_{N,k} \sum_{m=0}^{k-2} \binom{Nx-1}{m} \binom{N(1-x)-1}{k-2-m} \\
   &= \|f''\|_\infty \int_0^1 \sum_{k=2}^N \binom{N-2 }{k-2} y^{k-2} (1-y)^{N-k} \,\Lambda(\dy) \nn\\
   &= \|f''\|_\infty\,\Lambda([0,1]). \nn
\end{align}
Therefore, 
\begin{equation} \lbeq{gen-lambda-bd}
  \big(\Omega^N f\big)(x) = \vartheta (1-x) f'(x) + O(N^{-1}) + x (1-x) O(1).
\end{equation}
Use $f(x)=x, x \in [0,1]$ in \eqref{gen-lambda} to see that $(\xi_t^N)_{t \geq 0}$ is a non-negative
right-continuous submartingale with $\xi_0^N=0$ and $\EE[ \xi_t^N ] \leq \vartheta t$. Use $f(x)=x^2$ to deduce
from \eqref{gen-lambda-bd} that
\begin{equation}
	\mathbb{E}\bigl[ (\xi_t^N)^2 \bigr] \leq C t^2 + O(N^{-1}) t.
\end{equation}
Now reason as for the TFVMS in the proofs of \lemref{mutbound} and \thmref{mark-FV} to complete the claim.
\end{proof}

%
\subsection[Future application: trait-dependent branching]{Future application: Evolving phylogenies of trait-dependent branching} \label{sub:FApp}
%

In \cite{KW2014} the results of the present paper will be applied in a context of evolving genealogies to
establish the existence of a mark function with the help of \thmref{pr-modulus}. These genealogies are random
marked metric measure spaces, constructed as the limit of approximating particle systems. The individual birth-
respectively death-rates in the $N^{\mbox{th}}$-approximating population depend on the present trait of the
individuals alive and are of order $O(N)$. At each birth-event, mutation happens with a fixed probability. Each
individual is assigned mass $1/N$. The metric under consideration is genetic distance: in the
$N^{\mbox{th}}$-approximating population genetic distance is increased by $1/N$ at each birth with mutation.
Hence, genetic distance of two individuals is counted in terms of births with mutation backwards in time to the
MRCA rather than in terms of the time to the MRCA.

Because of the use of exponential times in the modeling of birth- and death-events in this therefore
non-ultrametric setup the analysis of the modulus of continuity of the trait-history of a particle in
combination with the evolution of its genetic age plays a major role in establishing tightness of the
approximating systems and existence of a mark function. In \cite[Lemma~3.9]{K2014}, control on the modulus of
continuity is obtained by transferring the model to the context of historical particle systems. In a first step,
time is related to genetic distance by means of the modulus of continuity. The extend of the change of trait of
an individual in a small amount of time (recall \eqref{eq:pas} and \eqref{eq:modulus2}) can then be controlled
by means of the modulus of continuity of its trait-path in combination with a control on the height of the
largest jump during this period of time. This can in turn be ensured by appropriate assumptions on the mutation
transition kernels of the approximating systems. 


\begin{acknowledgements}
	We are thankful to Anita Winter for discussions in the initial phase of the project, and to the referee
	for helpful comments.
	The research of Sandra Kliem was supported by the DFG through the SPP Priority Programme 1590.
\end{acknowledgements}


\footnotesize\renewcommand{\section}{\subsection}


\end{document}